\newtheorem{thm}{Theorem}[section]
\newtheorem{lem}[thm]{Lemma}
\newtheorem{cor}[thm]{Corollary}
\newtheorem{pro}[thm]{Proposition}
\newtheorem{ex}[thm]{Example}
\newtheorem{defi}[thm]{Definition}
\newcommand {\emptycomment}[1]{}
\newcommand{\lon }{\,\rightarrow\,}
\newcommand{\be }{\begin{equation}}
\newcommand{\ee }{\end{equation}}
\newcommand{\g}{\frkg}
\newcommand{\huaB}{\mathcal{B}}
\newcommand{\huaC}{{\mathcal{C}}}
\newcommand{\huaH}{\mathcal{H}}
\newcommand{\huaO}{\mathcal{O}}
\newcommand{\huaZ}{\mathcal{Z}}
\newcommand{\frkg}{\mathfrak g}
\def\qed{\hfill ~\vrule height6pt width6pt depth0pt}
\newcommand{\br}[1]{   [ \cdot,    \cdot  ]_\frkg   }
\newcommand{\Id}{\rm{Id}}
\newcommand{\dM}{\mathrm{d}}
\newcommand{\Hom}{\mathrm{Hom}}
\newcommand{\Ad}{\mathrm{Ad}}
\newcommand{\gl}{\mathfrak {gl}}
\newcommand{\ad}{\mathrm{ad}}
\newcommand{\reg}{\mathrm{reg}}
\begin{document}
\title{
{ Representations and cohomologies of Hom-pre-Lie algebras
\thanks
 {
Research supported by NSFC (11471139) and NSF of Jilin Province (20170101050JC).
 }
} }
\author{\vspace{2mm}Shanshan Liu, Lina Song and  Rong Tang  \\
Department of Mathematics, Jilin University,\\\vspace{2mm}
 Changchun 130012, Jilin, China
\\\vspace{3mm}
Email:shanshan18@mails.jlu.edu.cn; ~songln@jlu.edu.cn; ~tangrong16@mails.jlu.edu.cn }

\date{}
\footnotetext{{\it{Keyword}:  Representation, cohomology, Hom-Lie algebra,  Hom-pre-Lie algebra, linear deformation, Nijenhuis operator }}

\footnotetext{{\it{MSC}}: 16T25, 17B62, 17B99.}

\maketitle
\begin{abstract}
In this paper, first we study dual representations and tensor representations of   Hom-pre-Lie algebras. Then we develop the cohomology theory of Hom-pre-Lie algebras in term of the cohomology theory of Hom-Lie algebras. As applications, we study linear deformations of Hom-pre-Lie algebras, which are characterized by the second cohomology groups of Hom-pre-Lie algebras with the coefficients in the regular representation. The notion of a Nijenhuis operator on a Hom-pre-Lie algebra is introduced which can generate trivial linear deformations of a Hom-pre-Lie algebra. Finally, we introduce the notion  of   a Hessian structure   on a Hom-pre-Lie algebra, which is a symmetric nondegenerate 2-cocycle with the coefficient in the trivial representation. We also introduce the notion of an $\huaO$-operator on a Hom-pre-Lie algebra, by which we give an equivalent characterization of a Hessian structure.
\end{abstract}

\section{Introduction}

The representation theory of an algebraic
object is very important since it reveals some of its profound structures hidden underneath.  Furthermore, the cohomology theories of an algebraic
object  occupy a  central position since they can give invariants, e.g.   they can control deformations or extension problems. Representations and cohomology theories of various kinds of algebras have been developed with a great success. In particular, the representation theory, the cohomology theory and the deformation theory of Hom-Lie algebras were well studied in \cite{AEM,MS1,sheng3}. The purpose of this paper is to develop the representation theory and the cohomology theory of Hom-pre-Lie algebras.

In the study of $\sigma$-derivations of an associative algebra, Hartwig, Larsson
and Silvestrov introduced the notion of a Hom-Lie algebra in \cite{HLS}.
\begin{defi} A Hom-Lie algebra is a triple $(\g,[\cdot,\cdot],\alpha)$ consisting of a linear space $\g$, a skew-symmetric bilinear map $[\cdot,\cdot]:\wedge^2\g\longrightarrow \g$ and an algebra morphism $\alpha:\g\longrightarrow \g $, satisfying:
  \begin{equation}
[\alpha(x),[y,z]]+[\alpha(y),[z,x]]+[\alpha(z),[x,y]]=0,\quad
\forall~x,y,z\in \g.
\end{equation}
  \end{defi}
Some $q$-deformations of the Witt and the Virasoro algebras have the
structure of a Hom-Lie algebra. Because of their close relation
to discrete and deformed vector fields and differential calculus
\cite{HLS,LD1,LD2},   Hom-Lie algebras were widely studied. In particular, Hom-structures on semisimple Lie algebras were studied in \cite{Jin1,Jin2}; Geometrization of Hom-Lie algebras were studied in \cite{LGT,CLS};  Bialgebras for Hom-algebras were studied in \cite{Cai-Sheng,sheng1,Yao1,Yao3}; Recently,  Elchinger,  Lundengard,  Makhlouf and  Silvestrov extend the result in \cite{HLS} to the case of $(\sigma,\tau)$-derivations (\cite{ELMS}).

The notion of a Hom-pre-Lie algebra was introduced in \cite{MS2}.
 \begin{defi}
A Hom-pre-Lie algebra $(A,\cdot,\alpha)$ is a vector space $A$ equipped with a bilinear product $\cdot:A\otimes A\longrightarrow A$, and $\alpha\in \gl(A)$, such that for all $x,y,z\in A$, $\alpha(x \cdot y)=\alpha(x)\cdot \alpha(y)$ and the following equality is satisfied:
\begin{eqnarray}
(x\cdot y)\cdot \alpha(z)-\alpha(x)\cdot (y\cdot z)=(y\cdot x)\cdot \alpha(z)-\alpha(y)\cdot (x\cdot z).
\end{eqnarray}
\end{defi}Hom-pre-Lie algebras play important role in the construction of Hom-Lie 2-algebras (\cite{SC}). The geometrization of Hom-pre-Lie algebras was studied in \cite{Qing}, universal $\alpha$-central extensions of Hom-pre-Lie algebras were studied in \cite{sunbing} and the bialgebra theory of Hom-pre-Lie algebras was studied in \cite{QH}. There is a close relationship between Hom-pre-Lie algebras and Hom-Lie algebras: a Hom-pre-Lie algebra $(A,\cdot,\alpha)$ gives rise to a Hom-Lie algebra $(A,[\cdot,\cdot]_C,\alpha)$ via the commutator bracket, which is called the subadjacent Hom-Lie algebra and denoted by $A^C$. Furthermore, the map $L:A\longrightarrow\gl(A)$, defined by $L_xy=x\cdot y$ for all $x,y\in A$, gives rise to a representation of the subadjacent Hom-Lie algebra $A^C$ on $A$ with respect to $\alpha\in\gl(A)$.

Even though there are important applications of Hom-pre-Lie algebras,   cohomologies of Hom-pre-Lie algebras have not been well developed due to the difficulty. The purpose of this paper is to study  the cohomology theory of Hom-pre-Lie algebras and give its applications.

The notion of a representation of a Hom-pre-Lie algebra was introduced in \cite{QH} in the study of Hom-pre-Lie bialgebras (Hom-left-symmetric bialgebras). However, one need to add some complicated conditions to obtain the dual representation, which is  not convenient in applications. In this paper, we give the natural formula of a dual representation, which is nontrivial. Moreover, we show that there is well-defined tensor product of two representations of a Hom-pre-Lie algebra. This is the content of Section 3.

In Section 4, we give the cohomology theory of a Hom-pre-Lie algebra $(A,\cdot,\alpha)$ with the coefficient in a representation $(V;\rho,\mu)$. The main contribution is to define the coboundary operator $\partial:\Hom(\wedge^{n}A\otimes A,V)\longrightarrow\Hom(\wedge^{n+1}A\otimes A,V)$. To do that, first we show that the subadjacent Hom-Lie algebra $A^C$ represents on $\Hom(A,V)$ naturally and we obtain a cochain complex $(\oplus_n\Hom(\wedge^nA^C,\Hom(A,V)),\dM)$. Moreover, there holds: $\partial\circ\Phi=\Phi\circ \dM$, where $\Phi$ is the natural isomorphism from $\Hom(\wedge^nA^C,\Hom(A,V))$  to $\Hom(\wedge^nA\otimes A,V)$. Therefore, the operator $\partial$   is indeed a coboundary operator, i.e. satisfies $\partial\circ\partial=0.$ Consequently, we establish the cohomology theory for Hom-pre-Lie algebras.

In Section 5, we study linear deformations of a Hom-pre-Lie algebra using the cohomology with the coefficient in the regular representation defined in Section 4. We introduce the notion of a Nijenhuis operator on a Hom-pre-Lie algebra and show that it gives rise to a trivial deformation. Finally, we study the relation between linear deformations of a Hom-pre-Lie algebra and linear deformations of its subadjacent Hom-Lie algebra.

In Section 6, first we define   $\huaO$-operators on a Hom-pre-Lie algebra with respect to a representation and give its relation to  Nijenhuis operators. Then we define a Hessian structure on a  Hom-pre-Lie algebra  $(A,\cdot,\alpha)$ to be a nondegenerate 2-cocycle $\huaB\in Sym^2(A^*)$, and show that there is a one-to-one correspondence between some special  $\huaO$-operators on a Hom-pre-Lie algebra $(A,\cdot,\alpha)$ with respect to the representation $(A^*,(\alpha^{-1})^*, L^\star-R^\star,-R^\star)$ and Hessian structures on $(A,\cdot,\alpha)$.

\section{Preliminaries}
In this section, we recall representations, cohomologies and dual representations of Hom-Lie algebras. Throughout the paper, all the Hom-Lie algebras are regular.

\emptycomment{
\begin{defi} A morphism of Hom-Lie algebras $f:(\g,[\cdot,\cdot]_\g,\alpha_\mathfrak{g})\longrightarrow (\mathfrak{h},[\cdot,\cdot]_\mathfrak{h},\alpha_\mathfrak{h})$ is a linear map $f:\g\longrightarrow \mathfrak{h}$ such that
 \begin{eqnarray}
 f[x,y]_\g&=&[f(x),f(y)]_\mathfrak{h},\quad \forall~x,y\in \g,\\
 f\circ \alpha_\g&=&\alpha_\mathfrak{h}\circ f.
  \end{eqnarray}
  \end{defi}
}

 \begin{defi}{\rm(\cite[Definition 4.1]{sheng3})}\label{defi:hom-lie representation}
 A {\bf representation} of a Hom-Lie algebra $(\g,[\cdot,\cdot],\alpha)$ on
 a vector space $V$ with respect to $\beta\in\gl(V)$ is a linear map
  $\rho:\g\longrightarrow \gl(V)$, such that for all
  $x,y\in \g$, the following equalities are satisfied:
\begin{eqnarray}
\label{hom-lie-rep-1}\rho(\alpha(x))\circ \beta&=&\beta\circ \rho(x),\\
\label{hom-lie-rep-2}\rho([x,y])\circ \beta&=&\rho(\alpha(x))\circ\rho(y)-\rho(\alpha(y))\circ\rho(x).
\end{eqnarray}
  \end{defi}
We denote a representation by $(V,\beta,\rho)$. For all $x\in\mathfrak{g}$, we define $\ad_{x}:\mathfrak{g}\lon \mathfrak{g}$ by
\begin{eqnarray}
\ad_{x}(y)=[x,y],\quad\forall y \in \mathfrak{g}.
\end{eqnarray}
Then $\ad:\g\longrightarrow\gl(\frak g)$ is a representation of the Hom-Lie algebra $(\mathfrak{g},[\cdot,\cdot],\alpha)$ on $\g$ with respect to $\alpha$, which is called the {\bf adjoint representation}.

\begin{lem}\label{lem:semidirectp}
Let $(\g,[\cdot,\cdot],\alpha)$ be a Hom-Lie algebra, $(V,\beta)$  a vector space with a linear transformation and $\rho:
\g\rightarrow \gl(V)$ a linear
map. Then $(V,\beta,\rho)$ is a representation of $(\g,[\cdot,\cdot],\alpha)$ if and only if $(\g\oplus V,[\cdot,\cdot]_\rho,\alpha+\beta)$ is a Hom-Lie algebra, where $[\cdot,\cdot]_\rho$ and $\alpha+\beta$ are defined by
\begin{eqnarray}\label{eq:sum}
[x+u,y+v]_{\rho}&=&[x,y]+\rho(x)v-\rho(y)u,\\
(\alpha+\beta)(x+u)&=&\alpha(x)+\beta(u).
\end{eqnarray}
for all $x,y\in \g,~u,v\in V$.
\end{lem}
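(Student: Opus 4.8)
The plan is to verify directly that the Hom-Lie algebra axioms for the candidate structure $(\g\oplus V,[\cdot,\cdot]_\rho,\alpha+\beta)$ are equivalent, term by term, to the two defining conditions \eqref{hom-lie-rep-1} and \eqref{hom-lie-rep-2} of a representation; reading these equivalences in both directions then yields the stated ``if and only if''. Since the bracket $[\cdot,\cdot]_\rho$ and the map $\alpha+\beta$ of \eqref{eq:sum} are linear in each slot, bilinearity is immediate. Moreover, using the skew-symmetry of $[\cdot,\cdot]$ on $\g$ together with the antisymmetric appearance of $\rho(x)v-\rho(y)u$ in \eqref{eq:sum}, the bracket $[\cdot,\cdot]_\rho$ is automatically skew-symmetric, with no condition on $\rho$. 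Thus the content of the lemma lies entirely in the multiplicativity of $\alpha+\beta$ and in the Hom-Jacobi identity.

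First I would check the multiplicativity condition $(\alpha+\beta)[x+u,y+v]_\rho=[(\alpha+\beta)(x+u),(\alpha+\beta)(y+v)]_\rho$. Expanding both sides via \eqref{eq:sum} and cancelling the $\g$-component $\alpha[x,y]=[\alpha(x),\alpha(y)]$, which holds because $\alpha$ is an algebra morphism of $\g$, the remaining $V$-component reduces to $\beta(\rho(x)v)-\beta(\rho(y)u)=\rho(\alpha(x))\beta(v)-\rho(\alpha(y))\beta(u)$. Since $x,y,u,v$ are arbitrary, this holds if and only if $\beta\circ\rho(x)=\rho(\alpha(x))\circ\beta$ for all $x$, which is exactly \eqref{hom-lie-rep-1}.

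Next I would expand the Hom-Jacobi identity $[(\alpha+\beta)(x+u),[y+v,z+w]_\rho]_\rho+\text{cyclic}=0$. The $\g$-component of the left-hand side is precisely the cyclic sum $[\alpha(x),[y,z]]+[\alpha(y),[z,x]]+[\alpha(z),[x,y]]$, which vanishes because $(\g,[\cdot,\cdot],\alpha)$ is a Hom-Lie algebra, so it imposes no new condition. The $V$-component, after collecting terms, is a sum of three expressions, one applied to each of $u,v,w$: the coefficient of $u$ is $\big(\rho(\alpha(y))\rho(z)-\rho(\alpha(z))\rho(y)-\rho([y,z])\beta\big)u$, with the coefficients of $v$ and $w$ being the cyclic analogues. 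This whole $V$-component vanishes for all arguments if and only if \eqref{hom-lie-rep-2} holds.

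Combining the three checks gives both implications at once: if $(V,\beta,\rho)$ is a representation then \eqref{hom-lie-rep-1} and \eqref{hom-lie-rep-2} make $(\g\oplus V,[\cdot,\cdot]_\rho,\alpha+\beta)$ a Hom-Lie algebra, and conversely the Hom-Lie axioms force \eqref{hom-lie-rep-1} and \eqref{hom-lie-rep-2}. I expect no conceptual obstacle here; the only real care required is the bookkeeping in the Hom-Jacobi expansion, where one must cleanly separate the $\g$- and $V$-components and, within the $V$-part, isolate the coefficients of $u$, $v$, and $w$ so as to recover exactly the operator identity \eqref{hom-lie-rep-2} rather than a weaker scalar consequence of it.
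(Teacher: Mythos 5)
Your proof is correct: the paper states this lemma as a preliminary without giving any proof (it is quoted as a known fact, cf.\ \cite{sheng3}), and your direct componentwise verification --- skew-symmetry of $[\cdot,\cdot]_\rho$ automatic, multiplicativity of $\alpha+\beta$ equivalent to \eqref{hom-lie-rep-1}, and the $V$-component of the Hom-Jacobi identity equivalent to \eqref{hom-lie-rep-2} --- is exactly the standard argument. Your bookkeeping checks out: the coefficient of $u$ in the cyclic Hom-Jacobi expansion is indeed $\rho(\alpha(y))\rho(z)-\rho(\alpha(z))\rho(y)-\rho([y,z])\circ\beta$ with cyclic analogues for $v$ and $w$, and setting two of $u,v,w$ to zero (with $x,y,z$ arbitrary) isolates the full operator identity, so both directions of the equivalence follow as you claim.
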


This Hom-Lie algebra is called the semidirect product of $(\g,[\cdot,\cdot],\alpha)$ and $(V,\beta)$, and denoted by $\g\ltimes_\rho V.$

Given a representation $(V,\beta,\rho)$ of a $\Hom$-Lie algebra $(\g,[\cdot,\cdot],\alpha)$, the set of $k$-cochains is given by
\begin{equation}
 \huaC^k(\g;V)=\Hom(\wedge^k \g,V),\quad
 \forall k\geq 0.
\end{equation}
Define
 $\dM:\huaC^k(\g;V)\longrightarrow \huaC^{k+1}(\g;V)$  by
\begin{eqnarray}
  &&(\dM f)(x_1,\cdots,x_{k+1})=\sum_{i=1}^{k+1}(-1)^{i+1}\rho(x_i)\big{(}f(\alpha^{-1}(x_1),\cdots,\widehat{\alpha^{-1}(x_i)},\cdots,\alpha^{-1}(x_{k+1}))\big{)}\nonumber\\
  &&+\sum_{1\le i<j\le k+1}(-1)^{i+j}\beta f([\alpha^{-2}(x_i),\alpha^{-2}(x_j)],\alpha^{-1}(x_1),\cdots,\widehat{\alpha^{-1}(x_i)},\cdots,\widehat{\alpha^{-1}(x_j)},\cdots,\alpha^{-1}(x_{k+1})).\nonumber
\end{eqnarray}
Then we have $$\dM\circ\dM=0.$$ See \cite{caisheng} for more details.
Denote by $\huaZ^k(\g;V)$ the set of closed $k$-cochains, $\huaB^k(\g;V)$ the set of exact $k$-cochains, $\huaH^k(\g;V)=\huaZ^k(\g;V)/\huaB^k(\g;V)$ the corresponding cohomology groups.

Let $(V,\beta,\rho)$ be a representation of a Hom-Lie algebra $(\g,[\cdot,\cdot],\alpha)$. In the sequel, we always assume that $\beta$ is invertible. Define $\rho^*:\g\longrightarrow\gl(V^*)$ as usual by
$$\langle \rho^*(x)(\xi),u\rangle=-\langle\xi,\rho(x)(u)\rangle,\quad\forall x\in\g,u\in V,\xi\in V^*.$$
However, in general $\rho^*$ is not a representation of $\g$ anymore. Define $$\rho^\star:\g\longrightarrow\gl(V^*)$$ by
\begin{equation}\label{eq:new1}
 \rho^\star(x)(\xi):=\rho^*(\alpha(x))\big{(}(\beta^{-2})^*(\xi)\big{)},\quad\forall x\in\g,\xi\in V^*.
\end{equation}

\emptycomment{
More precisely, we have
\begin{eqnarray}\label{eq:new1gen}
\langle\rho^\star(x)(\xi),u\rangle=-\langle\xi,\rho(\alpha^{-1}(x))(\beta^{-2}(u))\rangle,\quad\forall x\in\g, u\in V, \xi\in V^*.
\end{eqnarray}
}

\begin{lem}{\rm(\cite[Lemma 3.4]{Cai-Sheng})}\label{lem:dualrep}
 Let $(V,\beta,\rho)$ be a representation of a Hom-Lie algebra $(\g,[\cdot,\cdot],\alpha)$. Then $(V^*,(\beta^{-1})^*,\rho^\star)$ is a representation of $(\g,[\cdot,\cdot],\alpha)$, where $\rho^\star:\g\longrightarrow\gl(V^*)$ is defined by \eqref{eq:new1}.
\end{lem}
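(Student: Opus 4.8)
The plan is to verify directly that the triple $(V^*,(\beta^{-1})^*,\rho^\star)$ satisfies the two defining identities \eqref{hom-lie-rep-1} and \eqref{hom-lie-rep-2} of a Hom-Lie representation, testing each identity against an arbitrary $u\in V$ under the duality pairing $\langle\cdot,\cdot\rangle$ between $V^*$ and $V$. The first preparatory step is to unwind the definition \eqref{eq:new1} into the explicit pairing formula $\langle\rho^\star(x)(\xi),u\rangle=-\langle\xi,\rho(\alpha^{-1}(x))(\beta^{-2}(u))\rangle$. This follows from the definition of $\rho^*$ together with the intertwining relation $\beta^{-2}\circ\rho(\alpha(x))=\rho(\alpha^{-1}(x))\circ\beta^{-2}$, which in turn comes from iterating \eqref{hom-lie-rep-1} in the equivalent form $\beta^{-1}\circ\rho(x)=\rho(\alpha^{-1}(x))\circ\beta^{-1}$.

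The two ingredients I would isolate at the outset are: (i) the commutation relations $\beta^{-k}\circ\rho(x)=\rho(\alpha^{-k}(x))\circ\beta^{-k}$ for $k=1,2$, coming from \eqref{hom-lie-rep-1}; and (ii) the fact that $\alpha$ is an algebra morphism, so $\alpha^{-1}[x,y]=[\alpha^{-1}(x),\alpha^{-1}(y)]$ and similarly for higher powers. With these in hand, \eqref{hom-lie-rep-1} for $\rho^\star$ is routine: expanding $\langle\rho^\star(\alpha(x))((\beta^{-1})^*\xi),u\rangle$ and $\langle(\beta^{-1})^*\rho^\star(x)(\xi),u\rangle$ through the explicit pairing formula, both sides collapse to $-\langle\xi,\rho(\alpha^{-1}(x))(\beta^{-3}(u))\rangle$ after moving the $\beta$-powers across $\rho$ using (i).

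The substantive step is \eqref{hom-lie-rep-2}. Testing $\langle\rho^\star([x,y])((\beta^{-1})^*\xi),u\rangle$ against the pairing and using (i)--(ii), the left-hand side reduces to $-\langle\xi,\rho([\alpha^{-2}(x),\alpha^{-2}(y)])(\beta^{-3}(u))\rangle$. Expanding the right-hand side $\langle(\rho^\star(\alpha(x))\rho^\star(y)-\rho^\star(\alpha(y))\rho^\star(x))(\xi),u\rangle$ in the same way, and commuting all $\beta$-powers to the right, produces $\langle\xi,(\rho(\alpha^{-1}(y))\rho(\alpha^{-2}(x))-\rho(\alpha^{-1}(x))\rho(\alpha^{-2}(y)))(\beta^{-4}(u))\rangle$. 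At this point the two sides carry different powers of $\beta$ and do not agree termwise; the key move is to apply the structure identity \eqref{hom-lie-rep-2} for $\rho$ itself, with $x\mapsto\alpha^{-2}(x)$ and $y\mapsto\alpha^{-2}(y)$, which rewrites the bracketed operator composition as $-\rho([\alpha^{-2}(x),\alpha^{-2}(y)])\circ\beta$. This absorbs exactly one factor of $\beta$, turning $\beta^{-4}$ into $\beta^{-3}$ and matching the left-hand side.

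I expect the main obstacle to be purely organizational: keeping track of the powers of $\alpha$ and $\beta$ (and their duals) through the repeated use of the commutation relation (i), and recognizing that the equality of the two sides is not termwise but only emerges after the Hom-Lie representation identity for $\rho$ is invoked. It is precisely the extra factor of $\beta$ appearing in \eqref{hom-lie-rep-2} that reconciles the otherwise mismatched $\beta$-degrees on the two sides, so the $\beta^{-2}$-twist built into the definition \eqref{eq:new1} is exactly what is needed for the bookkeeping to close.
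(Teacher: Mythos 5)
Your proof is correct. The paper itself offers no proof of Lemma \ref{lem:dualrep} --- it is quoted from Cai--Sheng (Lemma 3.4) --- so the natural comparison is with the computation the paper does carry out for the analogous Hom-pre-Lie statement (Theorem \ref{dual-rep}), and your argument coincides in method with it: unwind \eqref{eq:new1} into the pairing formula $\langle\rho^\star(x)(\xi),u\rangle=-\langle\xi,\rho(\alpha^{-1}(x))(\beta^{-2}(u))\rangle$, push powers of $\beta$ across $\rho$ via \eqref{hom-lie-rep-1}, and invoke \eqref{hom-lie-rep-2} at shifted arguments to reconcile the $\beta$-degrees. Every checkpoint you name is accurate: both sides of the first identity reduce to $-\langle\xi,\rho(\alpha^{-1}(x))(\beta^{-3}(u))\rangle$; the left side of the second identity becomes $-\langle\xi,\rho([\alpha^{-2}(x),\alpha^{-2}(y)])(\beta^{-3}(u))\rangle$ using that $\alpha^{-1}$ is also an algebra morphism; and the right side carries $\beta^{-4}$ until the substituted identity \eqref{hom-lie-rep-2} rewrites $\rho(\alpha^{-1}(y))\rho(\alpha^{-2}(x))-\rho(\alpha^{-1}(x))\rho(\alpha^{-2}(y))$ as $-\rho([\alpha^{-2}(x),\alpha^{-2}(y)])\circ\beta$, absorbing exactly one factor of $\beta$ --- which, as you observe, is precisely the role of the $(\beta^{-2})^*$-twist built into \eqref{eq:new1}.
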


The representation is called the {\bf dual representation} of the representation $(V,\beta,\rho)$.

\emptycomment{
\begin{defi} A {\bf pre-Lie algebra} $(A,\cdot_A)$ is a vector space $A$ equipped with a bilinear product $\cdot_A:A\otimes A\longrightarrow A$ such that for any $x,y,z\in A$, the associator $(x,y,z)=(x\cdot_A y)\cdot_A z-x\cdot_A(y\cdot_A z)$ is symmetric in x,y, i.e.,
\begin{eqnarray}
(x\cdot_A y)\cdot_A z-x\cdot_A(y\cdot_A z)=(y\cdot_A x)\cdot_A z-y\cdot_A(x\cdot_A z).
\end{eqnarray}
  \end{defi}
}

\emptycomment{
Let $A$ be a pre-Lie algebra. The commutator $[x,y]_A=x\cdot_A y-y\cdot_A x$ defines a Lie algebra structure on $A$, which is called the {\bf sub-adjacent Lie algebra} of $A$ and denoted by $\g(A)$. Furthermore, $L:A\longrightarrow \gl(A)$ with $L_x y=x\cdot_A y$ gives a representation of the Lie algebra $\g(A)$ on $A$. See for more details.
\begin{defi}Let $(A,\cdot_A)$ be a pre-Lie algebra and $V$ a vector space. A {\bf representation} of $A$ on $V$ consists of a pair $(\rho,\mu)$, where $\rho:A\longrightarrow \gl(V)$ is a representation of the Lie algebra $\g(A)$ on $A$ and $\mu:A\longrightarrow \gl(V)$ is a linear map satisfying:
\begin{eqnarray}
 \rho(x)\circ\mu(y)-\mu(y)\circ\rho(x)=\mu(x\cdot_A y)-\mu(y)\circ\mu(x),\quad \forall~x,y\in A.
\end{eqnarray}
  \end{defi}
Usually, we denote a representation by $(V;\rho,\mu)$. Define $R:A\longrightarrow\gl(A)$ by $R_xy=y\cdot_A x$. Then $(A;L,R)$ is a representation of $(A,\cdot_A)$. Furthermore, $(A^\ast;ad^\ast=L^\ast-R^\ast,-R^\star)$ is also a representation of $(A,\cdot_A)$, where $L^\ast$ and $R^\ast$ are given by
\begin{equation}
\langle L_x^\ast{\xi},y\rangle=\langle \xi,-L_x{y}\rangle,\quad \langle R_x^\ast{\xi},y\rangle=\langle \xi,-R_x{y}\rangle,\quad
\forall~x,y\in A,\xi \in A^\ast.
\end{equation}
The cohomology complex for a pre-Lie algebra $(A,\cdot_A)$ with a representation $(V;\rho,\mu)$ is given as follows . The set of $(n+1)$-cochains is given by
\begin{equation}
 C^{n+1}(A,V)=\Hom(\wedge^n A\otimes A,V),\quad
 \forall n\geq 0.
\end{equation}
For all $\omega\in C^n(A,V)$, the coboundary operator $\dM:C^n(A,V)\longrightarrow C^{n+1}(A,V)$ is given by
\begin{eqnarray*}
&&\dM \omega(x_1,x_2,\dots,x_{n+1})\\
&=&\sum_{i=1}^n(-1)^{i+1}\rho(x_i)\omega(x_1,\dots,\widehat{x_i},\dots,x_{n+1})\\
&&+\sum_{i=1}^n(-1)^{i+1}\mu(x_{n+1})\omega(x_1,\dots,\widehat{x_i},\dots,x_n,x_i)\\
&&-\sum_{i=1}^n(-1)^{i+1}\omega(x_1,\dots,\widehat{x_i},\dots,x_n,x_i\cdot_A x_{n+1})\\
&&+\sum_{1\leq i<j\leq n}(-1)^{i+j}\omega([x_i,x_j]_A,x_1,\dots,\widehat{x_i},\dots,\widehat{x_j},\dots,x_{n+1}).
\end{eqnarray*}
}

\section{Representations of Hom-pre-Lie algebras}
Let $(A,\cdot,\alpha)$ be a Hom-pre-Lie algebra. We always assume that it is regular, i.e. $\alpha$ is invertible. The commutator $[x,y]_C=x\cdot y-y\cdot x$ gives a Hom-Lie algebra $(A,[\cdot,\cdot]_C,\alpha)$, which is denoted by $A^C$ and called  the {\bf sub-adjacent Hom-Lie algebra} of $(A,\cdot,\alpha)$.
\begin{defi}
 A morphism from a Hom-pre-Lie algebra $(A,\cdot,\alpha)$ to a Hom-pre-Lie algebra $(A',\cdot',\alpha')$ is a linear map $f:A\longrightarrow A'$ such that for all
  $x,y\in A$, the following equalities are satisfied:
\begin{eqnarray}
\label{homo-1}f(x\cdot y)&=&f(x)\cdot' f(y),\hspace{3mm}\forall x,y\in A,\\
\label{homo-2}f\circ \alpha&=&\alpha'\circ f.
\end{eqnarray}
\end{defi}
\begin{defi}\label{defi:hom-pre representation}
 A {\bf representation} of a Hom-pre-Lie algebra $(A,\cdot,\alpha)$ on a vector space $V$ with respect to $\beta\in\gl(V)$ consists of a pair $(\rho,\mu)$, where $\rho:A\longrightarrow \gl(V)$ is a representation of the sub-adjacent Hom-Lie algebra $A^C$ on $V$ with respect to $\beta\in\gl(V)$, and $\mu:A\longrightarrow \gl(V)$ is a linear map, for all $x,y\in A$, satisfying:
\begin{eqnarray}
\label{rep-1}\beta\circ \mu(x)&=&\mu(\alpha(x))\circ \beta,\\
\label{rep-2}\mu(\alpha(y))\circ\mu(x)-\mu(x\cdot y)\circ \beta&=&\mu(\alpha(y))\circ\rho(x)-\rho(\alpha(x))\circ\mu(y).
\end{eqnarray}
\end{defi}
We denote a representation of a Hom-pre-Lie algebra $(A,\cdot,\alpha)$ by $(V,\beta,\rho,\mu)$. We define a bilinear operation $\cdot_\ltimes:\otimes^2(A\oplus V)\lon(A\oplus V)$ by
\begin{equation}
(x+u)\cdot_\ltimes (y+v):=x\cdot y+\rho(x)(v)+\mu(y)(u),\quad \forall x,y \in A,  u,v\in V.
\end{equation}\label{eq:12.8}
and a linear map $\alpha+\beta:A\oplus V\longrightarrow A\oplus V$ by
\begin{equation}
(\alpha+\beta)(x+u):=\alpha(x)+\beta(u),\quad \forall x\in A,  u\in V.
\end{equation}
It is straightforward to obtain the following result.
\begin{pro}\label{direct-product}
 With the above notations, $(A\oplus V,\cdot_\ltimes,\alpha+\beta)$ is a Hom-pre-Lie algebra, which is denoted by $A\ltimes_{(\rho,\mu)}V$ and called  the  semi-direct product of the  Hom-pre-Lie algebra $(A,\cdot,\alpha)$ and the representation $(V,\beta,\rho,\mu)$.
\end{pro}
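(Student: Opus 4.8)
The plan is to verify directly the two defining axioms of a Hom-pre-Lie algebra for the triple $(A\oplus V,\cdot_\ltimes,\alpha+\beta)$: first that $\alpha+\beta$ is multiplicative with respect to $\cdot_\ltimes$, and then the Hom-pre-Lie identity. Throughout I would write elements as $X=x+u$, $Y=y+v$, $Z=z+w$ with $x,y,z\in A$ and $u,v,w\in V$, and systematically separate the $A$-component from the $V$-component of each expression.

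For multiplicativity, I would compute $(\alpha+\beta)(X\cdot_\ltimes Y)$ and $(\alpha+\beta)(X)\cdot_\ltimes(\alpha+\beta)(Y)$ straight from the definitions. Their $A$-components agree because $\alpha$ is multiplicative on $(A,\cdot,\alpha)$; their $V$-components agree precisely because of the covariance relations $\beta\circ\rho(x)=\rho(\alpha(x))\circ\beta$ (equation \eqref{hom-lie-rep-1}) and $\beta\circ\mu(x)=\mu(\alpha(x))\circ\beta$ (equation \eqref{rep-1}). This step is routine.

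The substance lies in the Hom-pre-Lie identity. I would form the associator $(X\cdot_\ltimes Y)\cdot_\ltimes(\alpha+\beta)(Z)-(\alpha+\beta)(X)\cdot_\ltimes(Y\cdot_\ltimes Z)$ and show it is symmetric under interchanging $X$ and $Y$. Its $A$-component is exactly $(x\cdot y)\cdot\alpha(z)-\alpha(x)\cdot(y\cdot z)$, whose symmetry in $x,y$ is the Hom-pre-Lie identity for $(A,\cdot,\alpha)$. The whole difficulty is concentrated in the $V$-component, and the key organizing idea is to sort its terms, after antisymmetrizing in $X,Y$, according to which of $u,v,w$ they ultimately act on. The terms acting on $w$ collect into $\rho([x,y]_C)\beta(w)-\rho(\alpha(x))\rho(y)(w)+\rho(\alpha(y))\rho(x)(w)$, which vanishes by the Hom-Lie representation axiom \eqref{hom-lie-rep-2} for $\rho$ on $A^C$. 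The terms acting on $v$ and those acting on $u$ each reduce to a difference of the shape $\big(\mu(\alpha(z))\mu(x)-\mu(x\cdot z)\beta\big)-\big(\mu(\alpha(z))\rho(x)-\rho(\alpha(x))\mu(z)\big)$ (with $x$ replaced by $y$ in the $u$-case), which is exactly zero by relation \eqref{rep-2}.

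The main obstacle I anticipate is purely organizational: the $V$-component of the antisymmetrized associator is a sum of twelve terms, and it is easy to misattribute a term to the wrong variable or lose a sign. I would guard against this by tagging every term by its innermost argument ($u$, $v$, or $w$), verifying the three groups independently, and checking that each of \eqref{hom-lie-rep-2} and \eqref{rep-2} is applied with the correct substitution. Once the bookkeeping is controlled, each group collapses to zero by a single application of one representation axiom, so no genuinely new computation is required.
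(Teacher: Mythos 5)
Your proof is correct: the componentwise verification goes through exactly as you describe, with the $w$-terms closing under \eqref{hom-lie-rep-2} and the $u$- and $v$-groups each vanishing by one application of \eqref{rep-2}. The paper omits the proof as straightforward, and your direct check is precisely the intended argument, so there is nothing to compare beyond noting that your bookkeeping by innermost argument is the natural way to organize it.
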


\begin{ex}\label{ex-hom-pre-lie}
{\rm
Let $\{e_1,e_2\}$ be a basis of $2$-dimensional vector space $A$. Define a product on $A$ by
\begin{eqnarray*}
e_1\cdot e_2=0,\quad e_2\cdot e_1=e_1,\quad e_1\cdot e_1=0,\quad e_2\cdot e_2=e_1+e_2.
\end{eqnarray*}
Let $\alpha=\begin{pmatrix} 1 & 1 \\
                            0 & 1 \end{pmatrix}.$ More precisely,
$\alpha(e_1)=e_1,~\alpha(e_2)=e_1+e_2$.
Then $(A,\cdot,\alpha)$ is a Hom-pre-Lie algebra.
}
\end{ex}

\begin{ex}{\rm
Let $(A,\cdot,\alpha)$ be a Hom-pre-Lie algebra. Define $L,R:A\longrightarrow \gl(A)$ by
$$L_xy=x\cdot y,\quad R_xy=y\cdot x, \quad \forall x,y \in A.$$
Then $(A,\alpha,L,R)$ is a representation of $(A,\cdot,\alpha)$, which is called the {\bf regular representation}.
}
\end{ex}
\begin{ex}{\rm
Let $(A,\cdot,\alpha)$ be a Hom-pre-Lie algebra. Then $(\mathbb{R},1,0,0)$ is a representation of $(A,\cdot,\alpha)$, which is called the {\bf trivial representation}.
}
\end{ex}

\begin{pro}\label{important-rep}
Let $(V,\beta,\rho,\mu)$ be a representation of a Hom-pre-Lie algebra $(A,\cdot,\alpha)$. Then $(V,\beta,\rho-\mu)$ is a representation of the sub-adjacent Hom-Lie algebra $A^C$.
\end{pro}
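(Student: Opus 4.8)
The plan is to verify directly that $\sigma := \rho - \mu$ satisfies the two defining axioms \eqref{hom-lie-rep-1} and \eqref{hom-lie-rep-2} of a representation of the sub-adjacent Hom-Lie algebra $A^C$ on $V$ with respect to $\beta$, recalling that the bracket of $A^C$ is $[x,y]_C = x\cdot y - y\cdot x$.

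The first axiom $\sigma(\alpha(x))\circ\beta = \beta\circ\sigma(x)$ is immediate: since $\rho$ is by hypothesis a representation of $A^C$ it already satisfies \eqref{hom-lie-rep-1}, namely $\rho(\alpha(x))\circ\beta = \beta\circ\rho(x)$, while \eqref{rep-1} gives $\mu(\alpha(x))\circ\beta = \beta\circ\mu(x)$; subtracting yields the claim for $\sigma = \rho-\mu$.

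The substance lies in the second axiom, $\sigma([x,y]_C)\circ\beta = \sigma(\alpha(x))\circ\sigma(y) - \sigma(\alpha(y))\circ\sigma(x)$. I would expand the left-hand side as $(\rho-\mu)(x\cdot y - y\cdot x)\circ\beta$ and treat its $\rho$-part and $\mu$-part separately. The $\rho$-part equals $\rho([x,y]_C)\circ\beta$, which \eqref{hom-lie-rep-2} for $\rho$ turns directly into $\rho(\alpha(x))\circ\rho(y) - \rho(\alpha(y))\circ\rho(x)$. For the $\mu$-part, I would rewrite $\mu(x\cdot y)\circ\beta$ and $\mu(y\cdot x)\circ\beta$ by applying \eqref{rep-2} once to the ordered pair $(x,y)$ and once to the swapped pair $(y,x)$; this expresses each through the mixed compositions $\mu(\alpha(\cdot))\circ\mu(\cdot)$, $\mu(\alpha(\cdot))\circ\rho(\cdot)$ and $\rho(\alpha(\cdot))\circ\mu(\cdot)$.

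Expanding the right-hand side with $\sigma = \rho-\mu$ produces eight composition terms, and the proof reduces to matching them against the eight terms assembled on the left, sign by sign. The main obstacle is purely the bookkeeping rather than any conceptual difficulty: one must keep careful track of which argument carries $\alpha$ and of the order of composition, since the cross-terms $\rho(\alpha(x))\circ\mu(y)$, $\mu(\alpha(x))\circ\rho(y)$ and their $x\leftrightarrow y$ counterparts are exactly what the two symmetrized applications of \eqref{rep-2} are designed to supply. Once the terms are aligned each cancels against its partner, establishing \eqref{hom-lie-rep-2} for $\sigma$ and hence the proposition.
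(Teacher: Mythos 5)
Your proof is correct, but it takes a genuinely different route from the paper's. You verify the two axioms \eqref{hom-lie-rep-1} and \eqref{hom-lie-rep-2} for $\sigma=\rho-\mu$ by direct computation: the first axiom by subtracting \eqref{rep-1} from \eqref{hom-lie-rep-1}, and the second by solving \eqref{rep-2} for $\mu(x\cdot y)\circ\beta$ and for $\mu(y\cdot x)\circ\beta$ (the two symmetrized applications), then matching the resulting eight compositions against the expansion of $\sigma(\alpha(x))\circ\sigma(y)-\sigma(\alpha(y))\circ\sigma(x)$ --- and indeed all eight terms pair off with the correct signs, so the bookkeeping you defer does go through. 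The paper instead argues structurally: it forms the semidirect product Hom-pre-Lie algebra $A\ltimes_{(\rho,\mu)}V$ from Proposition \ref{direct-product}, computes the commutator of its sub-adjacent Hom-Lie algebra to find
\begin{equation*}
[x+u,y+v]_{C}=[x,y]_C+\bigl(\rho(x)-\mu(x)\bigr)v-\bigl(\rho(y)-\mu(y)\bigr)u,
\end{equation*}
and then invokes Lemma \ref{lem:semidirectp}, which says a bracket of this semidirect-product form on $\g\oplus V$ is Hom-Lie if and only if the coefficient maps constitute a representation. The paper's approach buys brevity and reusability --- the eight-term cancellation is absorbed once and for all into the already-established facts that $A\ltimes_{(\rho,\mu)}V$ is Hom-pre-Lie and that its commutator is Hom-Lie --- whereas your approach buys self-containedness: it needs neither Proposition \ref{direct-product} nor Lemma \ref{lem:semidirectp}, at the cost of carrying out the sign-sensitive expansion by hand.
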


\begin{proof}
By Proposition \ref{direct-product}, we have the semidirect product Hom-pre-Lie algebra $A\ltimes_{(\rho,\mu)}V$. Consider its sub-adjacent Hom-Lie algebra structure $(A\oplus V,[\cdot,\cdot]_{C},\alpha+\beta)$, we have
\begin{eqnarray*}
[x+u,y+v]_{C}&=&(x+u)\cdot_\ltimes(y+v)-(y+v)\cdot_\ltimes(x+u)\\
                     &=&x\cdot_A y+\rho(x)(v)+\mu(y)(u)-y\cdot_A x-\rho(y)(u)-\mu(x)(v)\\
                     &=&[x,y]_C+(\rho(x)-\mu(x))v-(\rho(y)-\mu(y))u,
\end{eqnarray*}
and
\begin{eqnarray*}
(\alpha+\beta)(x+u)=\alpha(x)+\beta(u).
\end{eqnarray*}
By Lemma \ref{lem:semidirectp}, we deduce that $(V,\beta,\rho-\mu)$ is a representation of the Hom-Lie algebra $A^C$.
\end{proof}

Let $(V,\beta,\rho,\mu)$ be a representation of a Hom-pre-Lie algebra $(A,\cdot,\alpha)$. In the sequel, we always assume that $\beta$ is invertible. For all $x\in A,u\in V,\xi\in V^*$, define $\rho^*:A\longrightarrow\gl(V^*)$ and $\mu^*:A\longrightarrow\gl(V^*)$ as usual by
$$\langle \rho^*(x)(\xi),u\rangle=-\langle\xi,\rho(x)(u)\rangle,\quad \langle \mu^*(x)(\xi),u\rangle=-\langle\xi,\mu(x)(u)\rangle.$$
Then define $\rho^\star:A\longrightarrow\gl(V^*)$ and $\mu^\star:A\longrightarrow\gl(V^*)$ by
\begin{eqnarray}
  \label{eq:1.3}\rho^\star(x)(\xi):=\rho^*(\alpha(x))\big{(}(\beta^{-2})^*(\xi)\big{)},\\
   \label{eq:1.4}\mu^\star(x)(\xi):=\mu^*(\alpha(x))\big{(}(\beta^{-2})^*(\xi)\big{)}.
\end{eqnarray}

\begin{thm}\label{dual-rep}
Let $(V,\beta,\rho,\mu)$ be a representation of a Hom-pre-Lie algebra $(A,\cdot,\alpha)$. Then $(V^*,(\beta^{-1})^*,\rho^\star-\mu^\star,-\mu^\star)$ is a representation of $(A,\cdot,\alpha)$, which is called the {\bf dual representation} of $(V,\beta,\rho,\mu)$.
\end{thm}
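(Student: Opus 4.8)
The plan is to verify the three conditions that Definition \ref{defi:hom-pre representation} demands of the candidate pair $(\rho^\star-\mu^\star,-\mu^\star)$ with respect to the invertible map $(\beta^{-1})^*$: namely, that $\rho^\star-\mu^\star$ is a representation of the sub-adjacent Hom-Lie algebra $A^C$ on $V^*$, and that the linear map $-\mu^\star$ satisfies the two compatibility conditions \eqref{rep-1} and \eqref{rep-2}. The first of these will come essentially for free, the second is a short dualization, and the third carries the real content.

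First I would dispose of the Hom-Lie part. By Proposition \ref{important-rep}, $(V,\beta,\rho-\mu)$ is a representation of $A^C$. Since the construction $\sigma\mapsto\sigma^\star$ of \eqref{eq:new1} is linear in $\sigma$ and the naive dual satisfies $(\rho-\mu)^*=\rho^*-\mu^*$, we have $(\rho-\mu)^\star=\rho^\star-\mu^\star$. Lemma \ref{lem:dualrep} then immediately yields that $(V^*,(\beta^{-1})^*,\rho^\star-\mu^\star)$ is a representation of $A^C$, which is exactly the first requirement. As a consistency check, the dual of $\rho$ alone is $\rho^\star$, which is likewise a Hom-Lie representation of $A^C$ by Lemma \ref{lem:dualrep}; this agrees with $(\rho^\star-\mu^\star)-(-\mu^\star)=\rho^\star$, as Proposition \ref{important-rep} would force of any genuine Hom-pre-Lie representation.

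Next I would check \eqref{rep-1} for $-\mu^\star$, that is, $(\beta^{-1})^*\circ\mu^\star(x)=\mu^\star(\alpha(x))\circ(\beta^{-1})^*$. Taking adjoints in \eqref{rep-1} and using $\mu^*(w)=-\mu(w)^*$ gives $\mu^*(x)\circ\beta^*=\beta^*\circ\mu^*(\alpha(x))$, hence $\mu^*(\alpha(x))=(\beta^{-1})^*\circ\mu^*(x)\circ\beta^*$. Substituting the definition \eqref{eq:1.4} of $\mu^\star$ and using that all powers of $\beta^*$ commute reduces both sides to $(\beta^{-1})^*\circ\mu^*(\alpha(x))\circ(\beta^{-2})^*$, so \eqref{rep-1} holds for the dual.

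The main work, and the only real obstacle, is verifying \eqref{rep-2} for the dual pair. Writing out \eqref{rep-2} for $(\rho^\star-\mu^\star,-\mu^\star)$ and cancelling the common term $\mu^\star(\alpha(y))\circ\mu^\star(x)$ appearing on both sides, the identity to be proved collapses to
\begin{equation}
\mu^\star(x\cdot y)\circ(\beta^{-1})^*=\rho^\star(\alpha(x))\circ\mu^\star(y)-\mu^\star(\alpha(x))\circ\mu^\star(y)-\mu^\star(\alpha(y))\circ\rho^\star(x).
\end{equation}
I would prove this by pairing both sides against arbitrary $u\in V$ and $\xi\in V^*$. Using \eqref{eq:1.3}, \eqref{eq:1.4} and the defining relation $\langle\mu^*(w)(\zeta),u\rangle=-\langle\zeta,\mu(w)(u)\rangle$, each term becomes a pairing of $\xi$ against an expression in $\rho$, $\mu$ and powers of $\beta$ applied to $u$. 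The commutation relations \eqref{hom-lie-rep-1} and \eqref{rep-1}, in the forms $\rho(\alpha^k(x))=\beta^k\rho(x)\beta^{-k}$ and $\mu(\alpha^k(x))=\beta^k\mu(x)\beta^{-k}$, let me slide all the $\beta$-powers into one normalized position, while the multiplicativity $\alpha(x\cdot y)=\alpha(x)\cdot\alpha(y)$ handles the term $\mu(\alpha(x\cdot y))$. After this normalization the claim becomes exactly the adjoint of the original relation \eqref{rep-2} evaluated at suitably $\alpha$-shifted arguments, and so it holds. The delicate point throughout is the bookkeeping of the order reversal under dualization and of the two factors of $\beta^{-2}$ introduced by each $\star$; this is why I would carry out this last step by explicit evaluation on $u$ and $\xi$ rather than by formal operator manipulation.
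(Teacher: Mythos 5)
Your proposal is correct and takes essentially the same route as the paper's proof: it disposes of the Hom-Lie part via Proposition \ref{important-rep} and Lemma \ref{lem:dualrep} (using linearity to get $(\rho-\mu)^\star=\rho^\star-\mu^\star$), then verifies \eqref{rep-1} and \eqref{rep-2} by dualizing the original relations with the $\beta$-powers normalized through the commutation identities. The reduced identity you obtain after cancelling the common term $\mu^\star(\alpha(y))\circ\mu^\star(x)$ is exactly what the paper's explicit pairing computation establishes before restating it as \eqref{dual-rep-2}, so the two arguments coincide in substance.
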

\begin{proof}
Since $(V,\beta,\rho,\mu)$ is a representation of a Hom-pre-Lie algebra $(A,\cdot,\alpha)$. By Proposition \ref{important-rep}, we deduce that $(V,\beta,\rho-\mu)$ is a representation of the sub-adjacent Hom-Lie algebra $A^C$. Moreover, by Lemma \ref{lem:dualrep}, we obtain that $$(V^*,(\beta^{-1})^*,(\rho-\mu)^\star=\rho^\star-\mu^\star)$$ is a representation of the Hom-Lie algebra $A^C$.

By \eqref{rep-1} and \eqref{eq:1.4}, for all $x\in A,\xi\in V^*$, we have
\begin{eqnarray*}
-\mu^\star(\alpha(x))\big((\beta^{-1})^*(\xi)\big)&=&-\mu^*(\alpha^{2}(x))\big((\beta^{-3})^*(\xi)\big)\\
&=&(\beta^{-1})^*\big(-\mu^*(\alpha(x))\big)\big((\beta^{-2})^*(\xi)\big)\\
&=&(\beta^{-1})^*\big(-\mu^\star(x)(\xi)\big),
\end{eqnarray*}
which implies that
\begin{equation}\label{dual-rep-1}
-\mu^\star{(}\alpha(x){)}\circ(\beta^{-1})^*=(\beta^{-1})^*\circ(-\mu^\star(x)).
\end{equation}

By \eqref{hom-lie-rep-1}, \eqref{rep-1}, \eqref{rep-2} and \eqref{eq:1.4}, for all $x,y\in A,\xi\in V^*$ and $u\in V$, we have
\begin{eqnarray*}
&&\Big\langle\mu^\star(x\cdot y)\big((\beta^{-1})^*(\xi)\big),u\Big\rangle\\
&=&\Big\langle\mu^*\big(\alpha(x)\cdot \alpha(y)\big)\big((\beta^{-3})^*(\xi)\big),u\Big\rangle\\
&=&-\Big\langle(\beta^{-3})^*(\xi),\mu\big(\alpha(x)\cdot \alpha(y)\big)(u)\Big\rangle\\
&=&-\Big\langle(\beta^{-3})^*(\xi),\mu(\alpha^{2}(y))\big(\mu(\alpha (x))(\beta^{-1}(u))\big)-\mu(\alpha^{2}(y))\big(\rho(\alpha(x))(\beta^{-1}(u))\big)\\
&&+\rho(\alpha^{2}(x))\big(\mu(\alpha(y))(\beta^{-1}(u))\big)\Big\rangle\\
&=&-\Big\langle(\beta^{-4})^*(\xi),\mu(\alpha^{3}(y))\big(\mu(\alpha^{2}(x))(u)\big)-\mu(\alpha^{3}(y))\big(\rho(\alpha^{2}(x))(u)\big)+\rho(\alpha^{3}(x))\big(\mu(\alpha^{2}(y))(u)\big)\Big\rangle\\
&=&-\Big\langle\mu^*(\alpha^{2}(x))\big(\mu^*(\alpha^{3}(y))((\beta^{-4})^*(\xi))\big)-\rho^*(\alpha^{2}(x))\big(\mu^*(\alpha^{3}(y))((\beta^{-4})^*(\xi))\big)\\
&&+\mu^*(\alpha^{2}(y))\big(\rho^*(\alpha^{3}(x))((\beta^{-4})^*(\xi))\big),u\Big\rangle\\
&=&-\Big\langle\mu^*(\alpha^{2}(x))\big(\mu^\star(\alpha^{2}(y))((\beta^{-2})^*(\xi))\big)-\rho^*(\alpha^{2}(x))\big(\mu^\star(\alpha^{2}(y))((\beta^{-2})^*(\xi))\big)\\
&&+\mu^*(\alpha^{2}(y))\big(\rho^\star(\alpha^{2}(x))((\beta^{-2})^*(\xi))\big),u\Big\rangle\\
&=&-\Big\langle\mu^*(\alpha^{2}(x))\big((\beta^{-2})^*(\mu^\star(y)(\xi))\big)-\rho^*(\alpha^{2}(x))\big((\beta^{-2})^*(\mu^\star(y)(\xi))\big)\\
&&+\mu^*(\alpha^{2}(y))\big((\beta^{-2})^*(\rho^\star(x)(\xi))\big),u\Big\rangle\\
&=&-\Big\langle\mu^\star(\alpha (x))\big{(}\mu^\star(y)(\xi)\big{)}-\rho^\star(\alpha (x))\big{(}\mu^\star(y)(\xi)\big{)}+\mu^\star(\alpha (y))\big{(}\rho^\star(x)(\xi)\big{)},u\Big\rangle,
\end{eqnarray*}
which implies that
\begin{equation}\label{dual-rep-2}
\mu^\star(\alpha(y))\circ\mu^\star(x)+\mu^\star(x\cdot y)\circ(\beta^{-1})^*=-\mu^\star(\alpha(y))\circ(\rho^\star-\mu^\star)(x)+(\rho^\star-\mu^\star)(\alpha(x))\circ\mu^\star(y).
\end{equation}
By \eqref{dual-rep-1} and \eqref{dual-rep-2}, we deduce that $(V^*,(\beta^{-1})^*,\rho^\star-\mu^\star,-\mu^\star)$ is a representation of $(A,\cdot,\alpha)$.
\end{proof}

Consider the dual representation of the regular representation, we have
\begin{cor}
 Let  $(A,\cdot,\alpha)$ be a Hom-pre-Lie algebra. Then $(A^*,(\alpha^{-1})^*,\ad^\star=L^\star-R^\star,-R^\star)$ is a representation of $(A,\cdot,\alpha)$.
\end{cor}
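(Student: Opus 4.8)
The plan is to obtain this corollary as a direct specialization of Theorem \ref{dual-rep} to the regular representation. Recall from the Example defining the regular representation that $(A,\alpha,L,R)$ is a representation of $(A,\cdot,\alpha)$; that is, in the notation $(V,\beta,\rho,\mu)$ of Theorem \ref{dual-rep} one takes $V=A$, $\beta=\alpha$, $\rho=L$ and $\mu=R$. Since $(A,\cdot,\alpha)$ is assumed regular, $\alpha$ is invertible, so the standing hypothesis that $\beta$ be invertible is satisfied and Theorem \ref{dual-rep} applies without any extra work.

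Feeding this data into the conclusion of Theorem \ref{dual-rep}, the dual representation is $(V^*,(\beta^{-1})^*,\rho^\star-\mu^\star,-\mu^\star)$, which here reads $(A^*,(\alpha^{-1})^*,L^\star-R^\star,-R^\star)$, where $L^\star$ and $R^\star$ are produced from $L^*$ and $R^*$ by the twisting formulas \eqref{eq:1.3} and \eqref{eq:1.4} with $\beta=\alpha$. This is exactly the quadruple in the statement, once I record that $L^\star-R^\star$ is abbreviated $\ad^\star$. This abbreviation is consistent: the operator $\ad=L-R$ on $A$ is precisely the adjoint action $\ad_x=[x,\cdot]_C$ of the sub-adjacent Hom-Lie algebra $A^C$, since $[x,y]_C=x\cdot y-y\cdot x=L_xy-R_xy$, and dualizing via \eqref{eq:new1} commutes with taking the difference, so $\ad^\star=(L-R)^\star=L^\star-R^\star$.

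There is essentially no obstacle here: all the analytic content has already been discharged in Theorem \ref{dual-rep}, and what remains is the bookkeeping of substituting the regular representation and matching the $\ad^\star$ notation. If one wished to verify the claim by hand rather than invoke the theorem, the only nontrivial checks would be the two representation axioms \eqref{rep-1} and \eqref{rep-2} for the pair $(L^\star-R^\star,\,-R^\star)$, which would amount to re-running the computation in the proof of Theorem \ref{dual-rep} with $\rho=L$, $\mu=R$ and $\beta=\alpha$; but appealing to the theorem makes this redundant.
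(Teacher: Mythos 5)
Your proposal is correct and matches the paper's own route exactly: the paper derives this corollary precisely by specializing Theorem \ref{dual-rep} to the regular representation $(A,\alpha,L,R)$, which is why it offers no separate proof beyond the phrase ``consider the dual representation of the regular representation.'' Your additional remarks --- that regularity of $\alpha$ ensures the invertibility hypothesis on $\beta$, and that linearity of the $\star$-twisting gives $\ad^\star=(L-R)^\star=L^\star-R^\star$ --- are accurate bookkeeping consistent with the paper's conventions.
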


\begin{pro}\label{dual-2-identity}
Let $(V,\beta,\rho,\mu)$ be a representation of a Hom-pre-Lie algebra $(A,\cdot,\alpha)$. Then the dual representation of $(V^*,(\beta^{-1})^*,\rho^\star,\mu^\star)$ is $(V,\beta,\rho,\mu).$
\end{pro}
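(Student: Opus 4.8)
Proposition~\ref{dual-2-identity} is the assertion that the duality of Theorem~\ref{dual-rep} is an involution: by Theorem~\ref{dual-rep} the dual representation of $(V,\beta,\rho,\mu)$ is $(V^*,(\beta^{-1})^*,\rho^\star-\mu^\star,-\mu^\star)$, and the plan is to apply the very same construction a second time and recover $(V,\beta,\rho,\mu)$ after the canonical identification $V\cong V^{**}$. So the proof is a direct computation of the ``double dual,'' organized so that the $\alpha$- and $\beta$-twists in \eqref{eq:1.3}--\eqref{eq:1.4} telescope away.

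First I would fix the canonical isomorphism $J:V\to V^{**}$, $\langle J(u),\xi\rangle=\langle\xi,u\rangle$, and, writing $\beta_1=(\beta^{-1})^*$ for the linear map attached to the dual representation, record the two bookkeeping identities that follow immediately from the definitions, namely $(\beta_1^{-1})^*\circ J=J\circ\beta$ and $(\beta_1^{-2})^*\circ J=J\circ\beta^2$. These express the outer twists produced by the second application of \eqref{eq:1.3} in terms of honest powers of $\beta$ acting on $V$.

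The computational heart is the following lemma, to be proved by unwinding \eqref{eq:1.3} twice: for every linear map $\sigma:A\to\gl(V)$ satisfying the equivariance $\sigma(\alpha(x))\circ\beta=\beta\circ\sigma(x)$, starring $\sigma$ with respect to $\beta$ and then starring the result with respect to $\beta_1$ returns $\sigma$ under $J$, i.e. $(\sigma^\star)^\star\circ J=J\circ\sigma$. Indeed, from \eqref{eq:1.3} and the sign convention $\langle\sigma^*(x)\xi,u\rangle=-\langle\xi,\sigma(x)u\rangle$ one gets $(\sigma^\star)^*(\alpha(x))\circ J=J\circ\beta^{-2}\sigma(\alpha^2(x))$; combining this with $(\beta_1^{-2})^*\circ J=J\circ\beta^2$ gives $(\sigma^\star)^\star(x)\circ J=J\circ\beta^{-2}\sigma(\alpha^2(x))\beta^2$, and the equivariance (which yields $\sigma(\alpha^2(x))=\beta^2\sigma(x)\beta^{-2}$) collapses the twist to $\sigma(x)$. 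Both $\rho$ and $\mu$ satisfy the hypothesis: $\rho$ by \eqref{hom-lie-rep-1}, since it is a representation of $A^C$, and $\mu$ by \eqref{rep-1}.

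Finally I would assemble the pieces. Applying Theorem~\ref{dual-rep} to the dual representation $(V^*,\beta_1,\rho^\star-\mu^\star,-\mu^\star)$, its dual has carrier $V^{**}\cong V$, linear map $(\beta_1^{-1})^*=\beta$, and, using that the star operation is linear in the action map together with the lemma, first component $(\rho^\star-\mu^\star)^\star-(-\mu^\star)^\star=(\rho-\mu)-(-\mu)=\rho$ and second component $-(-\mu^\star)^\star=\mu$. Hence the double dual is exactly $(V,\beta,\rho,\mu)$, as claimed. The main obstacle is entirely in the lemma: keeping track of the two nested negative-transpose conventions, the $\alpha^2$ and $\beta^{\pm2}$ factors, and the identification $V\cong V^{**}$, so that the equivariance relations cancel all twists cleanly; once the lemma is established the remaining assembly is immediate.
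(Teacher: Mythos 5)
Your proposal is correct, and it is genuinely more self-contained than the paper's own argument. The paper disposes of this proposition in two lines: it observes $(V^\ast)^\ast=V$ and $(((\beta^{-1})^\ast)^{-1})^\ast=\beta$, cites Lemma 3.7 of Cai--Sheng for the identity $(\rho^\star)^\star=\rho$, and asserts $(\mu^\star)^\star=\mu$ ``similarly''. Your computation proves exactly this key identity from scratch, and your formulation is sharper in a useful way: your lemma $(\sigma^\star)^\star\circ J=J\circ\sigma$ requires only the equivariance $\sigma(\alpha(x))\circ\beta=\beta\circ\sigma(x)$, not that $\sigma$ be a representation of $A^C$. Since $\mu$ satisfies only \eqref{rep-1} and is not itself a Hom-Lie representation, the cited lemma does not literally cover it, so your equivariance hypothesis is precisely what makes the paper's ``similarly'' rigorous. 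The chain of identities checks out: $(\sigma^\star)^\ast(\alpha(x))\circ J=J\circ\beta^{-2}\sigma(\alpha^{2}(x))$, then $(\beta_1^{-2})^\ast\circ J=J\circ\beta^{2}$ for $\beta_1=(\beta^{-1})^\ast$ gives $(\sigma^\star)^\star(x)\circ J=J\circ\beta^{-2}\sigma(\alpha^{2}(x))\beta^{2}=J\circ\sigma(x)$, where the last step uses $\sigma(\alpha^{2}(x))=\beta^{2}\sigma(x)\beta^{-2}$. The only divergence is one of reading: the paper's proof stars the components $\rho^\star$ and $\mu^\star$ of the quadruple $(V^\ast,(\beta^{-1})^\ast,\rho^\star,\mu^\star)$ separately, whereas you apply Theorem \ref{dual-rep} to the honest dual representation $(V^\ast,(\beta^{-1})^\ast,\rho^\star-\mu^\star,-\mu^\star)$ and telescope, obtaining $(\rho^\star-\mu^\star)^\star-(-\mu^\star)^\star=\rho$ and $-(-\mu^\star)^\star=\mu$; by linearity of the $\star$ operation the two computations carry identical content, and either version supplies the identities that the subsequent proposition's equivalence of its conditions (i) and (ii) actually uses.
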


\begin{proof}
It is obviously that $(V^\ast)^\ast=V$, $(((\beta^{-1})^\ast)^{-1})^\ast=\beta$. By Lemma $3.7$ in \cite{Cai-Sheng}, we obtain $(\rho^\star)^\star=\rho$. Similarly, we have $(\mu^\star)^\star=\mu$. The proof is finished.
\end{proof}

\emptycomment{
\begin{proof}
Since $(V,\beta,\rho)$ is a representation of the Hom-Lie algebra $A^C$, by \eqref{eq:1.3},\eqref{eq:1.4} and condition $\rm(i)$ in Definition \ref{defi:hom-pre representation}, for all $x\in A,\xi\in V^*$, we have
\begin{eqnarray*}
 (\rho^\star-\mu^\star)(\alpha(x))((\beta^{-1})^*(\xi))&=&(\rho^*-\mu^*)(\alpha^{2}(x))((\beta^{-3})^*(\xi))\\
 &=&(\beta^{-1})^*(\rho^*-\mu^*)(\alpha(x))((\beta^{-2})^*(\xi))\\
 &=&(\beta^{-1})^*((\rho^\star-\mu^\star)(x)(\xi)),
\end{eqnarray*}

which implies that
\begin{equation}\label{eq:7.7}
(\rho^\star-\mu^\star)\big{(}\alpha(x)\big{)}\circ(\beta^{-1})^*=(\beta^{-1})^*\circ(\rho^\star-\mu^\star)(x).
\end{equation}
By \eqref{eq:1.4} and condition $\rm(i)$ in Definition \ref{defi:hom-pre representation}, for all $x\in A,\xi\in V^*$, we have
\begin{eqnarray*}
-\mu^\star(\alpha(x))((\beta^{-1})^*(\xi))&=&-\mu^*(\alpha^{2}(x))(\beta^{-3})^*(\xi)=(\beta^{-1})^*(-\mu^*(\alpha(x)))((\beta^{-2})^*(\xi))\\
&=&(\beta^{-1})^*(-\mu^\star(x)(\xi)),
\end{eqnarray*}
which implies that
\begin{equation}\label{eq:7.8}
-\mu^\star\big{(}\alpha(x)\big{)}\circ(\beta^{-1})^*=(\beta^{-1})^*\circ(-\mu^\star)(x).
\end{equation}
On the other hand, by Lemma \ref{lem:dualrep}, for all $x,y\in A$, we have
\begin{equation}\label{eq:7.3}
\rho^\star([x,y]_A)\circ (\beta^{-1})^*=\rho^\star(\alpha(x))\circ \rho^\star(y)-\rho^\star(\alpha(y))\circ \rho^\star(x).
\end{equation}
By \eqref{eq:1.4} and conditions $\rm(i)$ and $\rm(ii)$ in Definition \ref{defi:hom-pre representation}, for all $x,y\in A,\xi\in V^*$ and $u\in V$, we have
\begin{eqnarray*}
 &&\langle\mu^\star([x,y]_A)((\beta^{-1})^*(\xi)),u\rangle\\
 &=&\langle\mu^*(\alpha ([x,y]_A))((\beta^{-3})^*(\xi)),u\rangle\\
 &=&-\langle(\beta^{-3})^*(\xi),\mu(\alpha ([x,y]_A))(u)\rangle\\
 &=&-\langle(\beta^{-3})^*(\xi),\mu(\alpha (x\cdot_A y-y\cdot_A x))(u)\rangle\\
 &=&-\langle(\beta^{-3})^*(\xi),\mu(\alpha^2(y))\mu(\alpha(x))\beta^{-1}(u)-\mu(\alpha^2(y))\rho(\alpha(x))\beta^{-1}(u)+\rho(\alpha^2(x))\mu(\alpha(y))\beta^{-1}(u)\rangle\\
 &&+\langle(\beta^{-3})^*(\xi),\mu(\alpha^2(x))\mu(\alpha(y))\beta^{-1}(u)-\mu(\alpha^2(x))\rho(\alpha(y))\beta^{-1}(u)+\rho(\alpha^2(y))\mu(\alpha(x))\beta^{-1}(u)\rangle\\
 &=&-\langle(\beta^{-4})^*(\xi),\mu(\alpha^{3}(y))(\mu(\alpha^{2}(x))(u))-\mu(\alpha^{3}(y))(\rho(\alpha^{2}(x))(u))+\rho(\alpha^{3}(x))(\mu(\alpha^{2}(y))(u))\rangle\\
 &&+\langle(\beta^{-4})^*(\xi),\mu(\alpha^{3}(x))(\mu(\alpha^{2}(y))(u))-\mu(\alpha^{3}(x))(\rho(\alpha^{2}(y))(u))+\rho(\alpha^{3}(y))(\mu(\alpha^{2}(x))(u))\rangle\\
 &=&-\langle\mu^*(\alpha^{2}(x))(\mu^\ast(\alpha^{3}(y))((\beta^{-4})^*(\xi)))-\rho^\ast(\alpha^{2}(x))(\mu^\ast(\alpha^{3}(y))((\beta^{-4})^*(\xi)))+\mu^\ast(\alpha^{2}(y))(\rho^\ast(\alpha^{3}(x))((\beta^{-4})^*(\xi))),u\rangle\\
 &&+\langle\mu^*(\alpha^{2}(y))(\mu^\ast(\alpha^{3}(x))((\beta^{-4})^*(\xi)))-\rho^\ast(\alpha^{2}(y))(\mu^\ast(\alpha^{3}(x))((\beta^{-4})^*(\xi)))+\mu^\ast(\alpha^{2}(x))(\rho^\ast(\alpha^{3}(y))((\beta^{-4})^*(\xi))),u\rangle\\
 &=&-\langle\mu^*(\alpha^{2}(x))(\mu^\star(\alpha^{2}(y))((\beta^{-2})^*(\xi)))-\rho^\ast(\alpha^{2}(x))(\mu^\star(\alpha^{2}(y))((\beta^{-2})^*(\xi)))+\mu^\ast(\alpha^{2}(y))(\rho^\star(\alpha^{2}(x))((\beta^{-2})^*(\xi))),u\rangle\\
 &&+\langle\mu^*(\alpha^{2}(y))(\mu^\star(\alpha^{2}(x))((\beta^{-2})^*(\xi)))-\rho^\ast(\alpha^{2}(y))(\mu^\star(\alpha^{2}(x))((\beta^{-2})^*(\xi)))+\mu^\ast(\alpha^{2}(x))(\rho^\star(\alpha^{2}(y))((\beta^{-2})^*(\xi))),u\rangle\\
 &=&-\langle\mu^*(\alpha^{2}(x))((\beta^{-2})^*(\mu^\star(y)(\xi)))-\rho^\ast(\alpha^{2}(x))((\beta^{-2})^*(\mu^\star(y)(\xi)))+\mu^\ast(\alpha^{2}(y))((\beta^{-2})^*(\rho^\star(x)(\xi))),u\rangle\\
 &&+\langle\mu^*(\alpha^{2}(y))((\beta^{-2})^*(\mu^\star(x)(\xi)))-\rho^\ast(\alpha^{2}(y))((\beta^{-2})^*(\mu^\star(x)(\xi)))+\mu^\ast(\alpha^{2}(x))((\beta^{-2})^*(\rho^\star(y)(\xi))),u\rangle\\
&=&-\langle\mu^\star(\alpha (x))\big{(}\mu^\star(y)(\xi)\big{)}-\rho^\star(\alpha (x))\big{(}\mu^\star(y)(\xi)\big{)}+\mu^\star(\alpha (y))\big{(}\rho^\star(x)(\xi)\big{)},u\rangle\\
&&+\langle\mu^\star(\alpha (y))\big{(}\mu^\star(x)(\xi)\big{)}-\rho^\star(\alpha (y))\big{(}\mu^\star(x)(\xi)\big{)}+\mu^\star(\alpha (x))\big{(}\rho^\star(y)(\xi)\big{)},u\rangle.
\end{eqnarray*}

which implies that
\begin{eqnarray}\label{eq:7.4}
\nonumber \mu^\star([x,y]_A)\circ(\beta^{-1})^*&=&-\mu^\star(\alpha(x))\circ \mu^\star(y)+\rho^\star(\alpha(x))\circ \mu^\star(y)-\mu^\star(\alpha(y))\circ \rho^\star(x)\\
&&+\mu^\star(\alpha(y))\circ \mu^\star(x)-\rho^\star(\alpha(y))\circ \mu^\star(x)+\mu^\star(\alpha(x))\circ \rho^\star(y)
\end{eqnarray}
Then by \eqref{eq:7.3} and \eqref{eq:7.4}, we have
\begin{eqnarray}\label{eq:7.5}
\nonumber &&(\rho^\star-\mu^\star)([x,y]_A)\circ(\beta^{-1})^*\\
\nonumber &=&\rho^\star(\alpha(x))\circ \rho^\star(y)-\rho^\star(\alpha(y))\circ \rho^\star(x)\\
\nonumber &&+\mu^\star(\alpha(x))\circ \mu^\star(y)-\rho^\star(\alpha(x))\circ \mu^\star(y)+\mu^\star(\alpha(y))\circ \rho^\star(x)\\
\nonumber &&-\mu^\star(\alpha(y))\circ \mu^\star(x)+\rho^\star(\alpha(y))\circ \mu^\star(x)-\mu^\star(\alpha(x))\circ \rho^\star(y)\\
&=&(\rho^\star-\mu^\star)\big{(}\alpha(x)\big{)}\circ(\rho^\star-\mu^\star)(y)
-(\rho^\star-\mu^\star)\big{(}\alpha(y)\big{)}\circ(\rho^\star-\mu^\star)(x).
\end{eqnarray}
By \eqref{eq:1.4} and conditions $\rm(i)$ and $\rm(ii)$ in Definition \ref{defi:hom-pre representation}, for all $x,y\in A,\xi\in V^*$ and $u\in V$, we have
\begin{eqnarray*}
&&\langle\mu^\star(x\cdot_A y)((\beta^{-1})^*(\xi)),u\rangle\\
&=&\langle\mu^*(\alpha(x)\cdot_A \alpha(y))((\beta^{-3})^*(\xi)),u\rangle\\
&=&-\langle(\beta^{-3})^*(\xi),\mu(\alpha(x)\cdot_A \alpha(y))(u)\rangle\\
&=&-\langle(\beta^{-3})^*(\xi),\mu(\alpha^{2}(y))(\mu(\alpha (x))(\beta^{-1}(u)))-\mu(\alpha^{2}(y))(\rho(\alpha(x))(\beta^{-1}(u)))+\rho(\alpha^{2}(x))(\mu(\alpha(y))(\beta^{-1}(u)))\rangle\\
&=&-\langle(\beta^{-4})^*(\xi),\mu(\alpha^{3}(y))(\mu(\alpha^{2}(x))(u))-\mu(\alpha^{3}(y))(\rho(\alpha^{2}(x))(u))+\rho(\alpha^{3}(x))(\mu(\alpha^{2}(y))(u))\rangle\\
&=&-\langle\mu^*(\alpha^{2}(x))(\mu^*(\alpha^{3}(y))((\beta^{-4})^*(\xi)))-\rho^*(\alpha^{2}(x))(\mu^*(\alpha^{3}(y))((\beta^{-4})^*(\xi)))+\mu^*(\alpha^{2}(y))(\rho^*(\alpha^{3}(x))((\beta^{-4})^*(\xi))),u\rangle\\
&=&-\langle\mu^*(\alpha^{2}(x))(\mu^\star(\alpha^{2}(y))((\beta^{-2})^*(\xi)))-\rho^*(\alpha^{2}(x))(\mu^\star(\alpha^{2}(y))((\beta^{-2})^*(\xi)))+\mu^*(\alpha^{2}(y))(\rho^\star(\alpha^{2}(x))((\beta^{-2})^*(\xi))),u\rangle\\
&=&-\langle\mu^*(\alpha^{2}(x))((\beta^{-2})^*(\mu^\star(y)(\xi)))-\rho^*(\alpha^{2}(x))((\beta^{-2})^*(\mu^\star(y)(\xi)))+\mu^*(\alpha^{2}(y))((\beta^{-2})^*(\rho^\star(x)(\xi))),u\rangle\\
&=&-\langle\mu^\star(\alpha (x))\big{(}\mu^\star(y)(\xi)\big{)}-\rho^\star(\alpha (x))\big{(}\mu^\star(y)(\xi)\big{)}+\mu^\star(\alpha (y))\big{(}\rho^\star(x)(\xi)\big{)},u\rangle,
\end{eqnarray*}
which implies that
\begin{equation}\label{eq:7.6}
\mu^\star(\alpha(y))\circ\mu^\star(x)+\mu^\star(x\cdot_A y)\circ(\beta^{-1})^*=-\mu^\star(\alpha(y))\circ(\rho^\star-\mu^\star)(x)+(\rho^\star-\mu^\star)(\alpha(x))\circ\mu^\star(y).
\end{equation}
Therefore, by \eqref{eq:7.7},\eqref{eq:7.8},\eqref{eq:7.5},\eqref{eq:7.6}, we deduce that $(\rho^\star-\mu^\star,-\mu^\star)$ is a representation of $(A,\cdot_A,\alpha)$ on $V^*$ with respect to $(\beta^{-1})^*$.
\end{proof}
}

\begin{pro}
Let $(V,\beta,\rho,\mu)$ be a representation of a Hom-pre-Lie algebra $(A,\cdot,\alpha)$. Then the following conditions are equivalent:
\begin{itemize}
\item [$\rm(i)$]  $(V,\beta,\rho-\mu,-\mu)$ is a representation of the Hom-pre-Lie algebra $(A,\cdot,\alpha)$,
\item[$\rm(ii)$] $(V^\ast,(\beta^{-1})^\ast,\rho^\star,\mu^\star)$ is a representation of the Hom-pre-Lie algebra $(A,\cdot,\alpha)$,
\item[$\rm(iii)$] $\mu(\alpha(x))\circ \mu(y)=-\mu(\alpha(y))\circ \mu(x)$, for all $x,y\in A$.
\end{itemize}
\end{pro}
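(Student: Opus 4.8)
The plan is to prove the two equivalences $\rm(i)\Leftrightarrow\rm(iii)$ and $\rm(i)\Leftrightarrow\rm(ii)$ separately, which together give the statement. The first is a direct computation with the defining equations \eqref{rep-1}--\eqref{rep-2}, while the second follows formally from the dual-representation construction of Theorem \ref{dual-rep} once one observes that it is involutive.

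First I would treat $\rm(i)\Leftrightarrow\rm(iii)$. Most of the axioms needed for $(V,\beta,\rho-\mu,-\mu)$ to be a representation are automatic: Proposition \ref{important-rep} already provides that $(V,\beta,\rho-\mu)$ is a representation of the sub-adjacent Hom-Lie algebra $A^C$, and \eqref{rep-1} for $\mu$ gives $\beta\circ(-\mu(x))=(-\mu(\alpha(x)))\circ\beta$, i.e. \eqref{rep-1} for $-\mu$. Thus $(V,\beta,\rho-\mu,-\mu)$ is a representation precisely when the pair $(\rho-\mu,-\mu)$ satisfies \eqref{rep-2}. I would expand that instance of \eqref{rep-2}, use \eqref{rep-2} for the original pair $(\rho,\mu)$ to replace the term $\mu(x\cdot y)\circ\beta$, and cancel the resulting common terms; the identity that remains is exactly $\mu(\alpha(y))\circ\mu(x)=-\mu(\alpha(x))\circ\mu(y)$, which is condition $\rm(iii)$. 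Hence $\rm(i)\Leftrightarrow\rm(iii)$.

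Next I would treat $\rm(i)\Leftrightarrow\rm(ii)$ using that the $\star$-operation of \eqref{eq:1.3}--\eqref{eq:1.4} is linear in its argument. Feeding the quadruple $(V,\beta,\rho-\mu,-\mu)$ into the dual-representation formula of Theorem \ref{dual-rep} produces $(V^*,(\beta^{-1})^*,(\rho-\mu)^\star-(-\mu)^\star,-(-\mu)^\star)$, and the linear identities $(\rho-\mu)^\star=\rho^\star-\mu^\star$, $(-\mu)^\star=-\mu^\star$ collapse this to exactly $(V^*,(\beta^{-1})^*,\rho^\star,\mu^\star)$. By the double-dual identities $(\rho^\star)^\star=\rho$ and $(\mu^\star)^\star=\mu$ recorded in Proposition \ref{dual-2-identity}, this construction squares to the identity, so the dual of $(V^*,(\beta^{-1})^*,\rho^\star,\mu^\star)$ is in turn $(V,\beta,\rho-\mu,-\mu)$. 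Invoking Theorem \ref{dual-rep} in each direction then shows that $(V,\beta,\rho-\mu,-\mu)$ is a representation if and only if $(V^*,(\beta^{-1})^*,\rho^\star,\mu^\star)$ is, namely $\rm(i)\Leftrightarrow\rm(ii)$.

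The routine part is the sign bookkeeping in expanding \eqref{rep-2} for $(\rho-\mu,-\mu)$ in the first equivalence. The step that needs the most care is the second: one must confirm that applying the Theorem \ref{dual-rep} formula to $(V,\beta,\rho-\mu,-\mu)$ really returns $(V^*,(\beta^{-1})^*,\rho^\star,\mu^\star)$ and that the construction is involutive, since it is precisely this that lets Theorem \ref{dual-rep} be applied in both directions without separately re-checking the Hom-Lie representation axiom for $\rho^\star$.
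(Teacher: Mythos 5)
Your proposal is correct and follows essentially the same route as the paper: the paper also obtains $\rm(i)\Leftrightarrow\rm(ii)$ by combining Theorem \ref{dual-rep} with the involutivity recorded in Proposition \ref{dual-2-identity}, and obtains $\rm(i)\Leftrightarrow\rm(iii)$ by the direct computation with \eqref{rep-1}--\eqref{rep-2} that you carry out (your expansion of \eqref{rep-2} for $(\rho-\mu,-\mu)$, cancelling against \eqref{rep-2} for $(\rho,\mu)$, is exactly right). The only difference is that the paper states these two steps in two sentences, whereas you supply the sign bookkeeping and the check that $(\rho-\mu)^\star-(-\mu)^\star=\rho^\star$, $-(-\mu)^\star=\mu^\star$ explicitly.
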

\begin{proof}
By Theorem \ref{dual-rep} and Proposition \ref{dual-2-identity}, we obtain that condition $\rm(i)$ is equivalent to condition $\rm(ii)$. Since $(V,\beta,\rho,\mu)$ is a representation of a Hom-pre-Lie algebra $(A,\cdot,\alpha)$, we deduce that condition $\rm(i)$ is equivalent to condition $\rm(iii)$.
\end{proof}

At the end of this section, we show that the tensor product of two representations of a Hom-pre-Lie algebra is still a representation.

\begin{pro}
Let $(A,\cdot,\alpha)$ be a Hom-pre-Lie algebra, $(V,\beta_V,\rho_V,\mu_V)$ and $(W,\beta_W,\rho_W,\mu_W)$ its representations. Then $(V\otimes W,\beta_V\otimes \beta_W,\rho_V\otimes \beta_W+\beta_V\otimes (\rho_W-\mu_W),\mu_V\otimes \beta_W)$ is a representation of $(A,\cdot,\alpha)$.
\end{pro}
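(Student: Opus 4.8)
The plan is to verify directly the two requirements of Definition \ref{defi:hom-pre representation} for the candidate data on $V\otimes W$, namely $\tilde\beta=\beta_V\otimes\beta_W$, $\tilde\rho=\rho_V\otimes\beta_W+\beta_V\otimes(\rho_W-\mu_W)$ and $\tilde\mu=\mu_V\otimes\beta_W$: first that $\tilde\rho$ represents the sub-adjacent Hom-Lie algebra $A^C$ on $V\otimes W$ with respect to $\tilde\beta$, and then that $\tilde\mu$ satisfies \eqref{rep-1} and \eqref{rep-2}. All identities are checked on decomposable tensors $v\otimes w$, where $(\phi\otimes\psi)(v\otimes w)=\phi(v)\otimes\psi(w)$.

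For the first requirement I would observe that the problem reduces to the tensor product construction for Hom-Lie algebras. Indeed, since $(V,\beta_V,\rho_V,\mu_V)$ is a representation, $\rho_V$ is by definition a representation of $A^C$ on $V$; and by Proposition \ref{important-rep} applied to $(W,\beta_W,\rho_W,\mu_W)$, the map $\rho_W-\mu_W$ is a representation of $A^C$ on $W$. The map $\tilde\rho=\rho_V\otimes\beta_W+\beta_V\otimes(\rho_W-\mu_W)$ is precisely the tensor product of these two $A^C$-representations, so $\tilde\rho$ is a representation of $A^C$ by the standard Hom-Lie tensor product formula; here \eqref{hom-lie-rep-1} for $\tilde\rho$ follows at once from \eqref{hom-lie-rep-1} for the two factors, while \eqref{hom-lie-rep-2} is the usual (longer) Hom-Lie computation. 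If that construction is not taken as known, I would instead verify \eqref{hom-lie-rep-1} and \eqref{hom-lie-rep-2} for $\tilde\rho$ by hand, expanding on $v\otimes w$ and repeatedly using the $\beta$-equivariance of $\rho_V,\rho_W,\mu_W$.

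The verification of \eqref{rep-1} for $\tilde\mu$ is immediate: $\tilde\beta\circ\tilde\mu(x)=\beta_V\mu_V(x)\otimes\beta_W^2$ and $\tilde\mu(\alpha(x))\circ\tilde\beta=\mu_V(\alpha(x))\beta_V\otimes\beta_W^2$ agree by \eqref{rep-1} for $\mu_V$. The genuinely structural step is \eqref{rep-2} for $\tilde\mu$. Expanding the left-hand side gives
$$\tilde\mu(\alpha(y))\circ\tilde\mu(x)-\tilde\mu(x\cdot y)\circ\tilde\beta=\big[\mu_V(\alpha(y))\mu_V(x)-\mu_V(x\cdot y)\beta_V\big]\otimes\beta_W^2,$$
while the right-hand side $\tilde\mu(\alpha(y))\circ\tilde\rho(x)-\tilde\rho(\alpha(x))\circ\tilde\mu(y)$ yields the matching term $\big[\mu_V(\alpha(y))\rho_V(x)-\rho_V(\alpha(x))\mu_V(y)\big]\otimes\beta_W^2$ together with the two cross terms $\mu_V(\alpha(y))\beta_V\otimes\beta_W(\rho_W-\mu_W)(x)$ and $-\beta_V\mu_V(y)\otimes(\rho_W-\mu_W)(\alpha(x))\beta_W$. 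Using \eqref{rep-1} for $\mu_V$ (to rewrite $\mu_V(\alpha(y))\beta_V=\beta_V\mu_V(y)$) and \eqref{hom-lie-rep-1} for the $A^C$-representation $\rho_W-\mu_W$ (to rewrite $(\rho_W-\mu_W)(\alpha(x))\beta_W=\beta_W(\rho_W-\mu_W)(x)$), these two cross terms become identical and cancel. What survives is exactly \eqref{rep-2} for $(V,\beta_V,\rho_V,\mu_V)$, which holds by hypothesis, completing the proof.

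I expect the main obstacle to be the first step: if one declines to invoke the Hom-Lie tensor product construction, then establishing \eqref{hom-lie-rep-2} for $\tilde\rho$ is the longest and most error-prone computation, since it mixes all four of $\rho_V,\mu_V,\rho_W,\mu_W$ and the twisting maps. By contrast, the Hom-pre-Lie-specific content — that \eqref{rep-2} is inherited by the tensor product — is short once one notices that $\mu_W$ enters $\tilde\rho$ only through the combination $\rho_W-\mu_W$ and that the resulting $W$-cross terms cancel by the $\beta$-equivariance relations.
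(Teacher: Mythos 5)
Your proof is correct, but it is organized differently from the paper's. The paper verifies all four conditions \eqref{tensor-rep-1}--\eqref{tensor-rep-4} by brute-force expansion on $V\otimes W$: in particular it establishes the Hom-Lie condition \eqref{hom-lie-rep-2} for $\tilde\rho$ (its \eqref{tensor-rep-3}) through a single expansion into roughly twenty tensor-product terms, using only the defining identities of the two representations, and then disposes of the Hom-pre-Lie condition \eqref{tensor-rep-4} with a ``similarly''. You instead factor the longest verification through structure the paper has already built: since $\rho_V$ is an $A^C$-representation by definition and $\rho_W-\mu_W$ is one by Proposition \ref{important-rep}, your $\tilde\rho$ is exactly the Hom-Lie tensor product $\rho_V\otimes\beta_W+\beta_V\otimes(\rho_W-\mu_W)$ of two $A^C$-representations, and the remaining Hom-pre-Lie-specific work is the short cancellation of the two $W$-cross terms in \eqref{rep-2} via \eqref{rep-1} for $\mu_V$ and \eqref{hom-lie-rep-1} for $\rho_W-\mu_W$ --- that cancellation is the same mechanism hiding inside the paper's long computation, so your argument makes the paper's ``similarly'' explicit while compressing its main display. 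The one thing to flag is that the Hom-Lie tensor-product lemma you invoke is not stated in this paper (nor among its quoted results), so as written your proof rests on an unproved auxiliary claim; however, you anticipate this and sketch the by-hand verification, which is genuinely short --- with $\tilde\beta=\beta_V\otimes\beta_W$ the cross terms in $\tilde\rho(\alpha(x))\circ\tilde\rho(y)-\tilde\rho(\alpha(y))\circ\tilde\rho(x)$ cancel immediately by the $\beta$-equivariance \eqref{hom-lie-rep-1} of each factor, leaving the diagonal terms that give \eqref{hom-lie-rep-2} --- so it is considerably less error-prone than you fear, and less laborious than the paper's direct expansion. Net effect: your route buys modularity and a reusable general lemma at the cost of having to prove that lemma; the paper's route is self-contained but opaque.
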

\begin{proof}
Since $(V,\beta_V,\rho_V,\mu_V)$ and $(W,\beta_W,\rho_W,\mu_W)$ are representations of a Hom-pre-Lie algebra $(A,\cdot,\alpha)$, for all $x\in A$, we have
\begin{eqnarray*}&&\Big(\rho_V\otimes \beta_W+\beta_V\otimes (\rho_W-\mu_W)\Big)(\alpha(x))\circ (\beta_V\otimes \beta_W)\\
&&-(\beta_V\otimes \beta_W)\circ \Big(\rho_V\otimes \beta_W+\beta_V\otimes (\rho_W-\mu_W)\Big)(x)\\
&=&\big(\rho_V(\alpha(x))\circ \beta_V \big)\otimes(\beta_W\circ \beta_W)+(\beta_V\circ \beta_V)\otimes \big(\rho_W(\alpha(x))\circ \beta_W\big)\\
&&-(\beta_V\circ \beta_V)\otimes \big(\mu_W(\alpha(x))\circ \beta_W\big)-\big(\beta_V\circ \rho_V(x)\big)\otimes(\beta_W\circ \beta_W)\\
&&-(\beta_V\circ \beta_V)\otimes \big(\beta_W \circ \rho_W(x)\big)+(\beta_V\circ \beta_V)\otimes \big(\beta_W\circ \mu_W(x)\big)\\
&=&0,
\end{eqnarray*}
which implies that
\begin{equation}\label{tensor-rep-1}
\Big(\rho_V\otimes \beta_W+\beta_V\otimes (\rho_W-\mu_W)\Big)(\alpha(x))\circ (\beta_V\otimes \beta_W)=(\beta_V\otimes \beta_W)\circ \Big(\rho_V\otimes \beta_W+\beta_V\otimes (\rho_W-\mu_W)\Big)(x).
\end{equation}
Similarly, we have
\begin{equation}\label{tensor-rep-2}
(\mu_V\otimes \beta_W)(\alpha(x))\circ (\beta_V\otimes \beta_W)=(\beta_V\otimes \beta_W)\circ (\mu_V\otimes \beta_W )(x).
\end{equation}
For all $x,y\in A$, by straightforward computations, we have
\begin{eqnarray*}&&\Big(\rho_V\otimes \beta_W+\beta_V\otimes (\rho_W-\mu_W)\Big)([x,y])\circ (\beta_V\otimes \beta_W)\\
&&-\Big(\rho_V\otimes \beta_W+\beta_V\otimes (\rho_W-\mu_W)\Big)(\alpha(x))\circ \Big(\rho_V\otimes \beta_W+\beta_V\otimes (\rho_W-\mu_W)\Big)(y)\\
&&+\Big(\rho_V\otimes \beta_W+\beta_V\otimes (\rho_W-\mu_W)\Big)(\alpha(y))\circ \Big(\rho_V\otimes \beta_W+\beta_V\otimes (\rho_W-\mu_W)\Big)(x)\\
&=&\big(\rho_V([x,y])\circ \beta_V \big)\otimes(\beta_W\circ \beta_W)+(\beta_V\circ \beta_V)\otimes \big(\rho_W([x,y])\circ \beta_W\big)\\
&&-(\beta_V\circ \beta_V)\otimes \big(\mu_W([x,y])\circ \beta_W\big)-\big(\rho_V(\alpha(x))\circ \rho_V(y)\big)\otimes (\beta_W\circ \beta_W)\\
&&-\big(\rho_V(\alpha(x))\circ \beta_V\big)\otimes \big(\beta_W\circ \rho_W(y)\big)+\big(\rho_V(\alpha(x))\circ \beta_V \big)\otimes \big(\beta_W\circ \mu_W(y)\big)\\
&&-\big(\beta_V\circ \rho_V(y)\big)\otimes \big(\rho_W(\alpha(x))\circ \beta_W\big)-(\beta_V\circ \beta_V)\otimes \big(\rho_W(\alpha(x))\circ \rho_W(y)\big)\\
&&+(\beta_V\circ \beta_V)\otimes \big(\rho_W(\alpha(x))\circ \mu_W(y)\big)+\big(\beta_V\circ \rho_V(y)\big)\otimes \big(\mu_W(\alpha(x))\circ \beta_W\big)\\
&&+(\beta_V\circ \beta_V)\otimes \big(\mu_W(\alpha(x))\circ \rho_W(y)\big)-(\beta_V\circ \beta_V)\otimes \big(\mu_W(\alpha(x))\circ \mu_W(y)\big)\\
&&+\big(\rho_V(\alpha(y))\circ \rho_V(x)\big)\otimes (\beta_W\circ \beta_W)+\big(\rho_V(\alpha(y))\circ \beta_V\big)\otimes \big(\beta_W\circ \rho_W(x)\big)\\
&&-\big(\rho_V(\alpha(y))\circ \beta_V \big)\otimes \big(\beta_W\circ \mu_W(x)\big)+\big(\beta_V\circ \rho_V(x)\big)\otimes \big(\rho_W(\alpha(y))\circ \beta_W\big)\\
&&+(\beta_V\circ \beta_V)\otimes \big(\rho_W(\alpha(y))\circ \rho_W(x)\big)-(\beta_V\circ \beta_V)\otimes \big(\rho_W(\alpha(y))\circ \mu_W(x)\big)\\
&&-\big(\beta_V\circ \rho_V(x)\big)\otimes \big(\mu_W(\alpha(y))\circ \beta_W\big)-(\beta_V\circ \beta_V)\otimes \big(\mu_W(\alpha(y))\circ \rho_W(x)\big)\\
&&+(\beta_V\circ \beta_V)\otimes \big(\mu_W(\alpha(y))\circ \mu_W(x)\big)\\
&=&0,
\end{eqnarray*}
which implies that
\begin{eqnarray}\label{tensor-rep-3}
\nonumber&&\Big(\rho_V\otimes \beta_W+\beta_V\otimes (\rho_W-\mu_W)\Big)([x,y])\circ (\beta_V\otimes \beta_W)\\
\nonumber&=&\Big(\rho_V\otimes \beta_W+\beta_V\otimes (\rho_W-\mu_W)\Big)(\alpha(x))\circ \Big(\rho_V\otimes \beta_W+\beta_V\otimes (\rho_W-\mu_W)\Big)(y)\\
&&-\Big(\rho_V\otimes \beta_W+\beta_V\otimes (\rho_W-\mu_W)\Big)(\alpha(y))\circ \Big(\rho_V\otimes \beta_W+\beta_V\otimes (\rho_W-\mu_W)\Big)(x).
\end{eqnarray}
Similarly, we have
\begin{eqnarray}\label{tensor-rep-4}
\nonumber &&(\mu_V\otimes \beta_W)(\alpha(y))\circ \Big(\rho_V\otimes \beta_W+\beta_V\otimes (\rho_W-\mu_W)\Big)(x)\\
\nonumber &&-\Big(\rho_V\otimes \beta_W+\beta_V\otimes (\rho_W-\mu_W)\Big)(\alpha(x))\circ (\mu_V\otimes \beta_W)(y)\\
&=&(\mu_V\otimes \beta_W)(\alpha(y))\circ (\mu_V\otimes \beta_W)(x)-(\mu_V\otimes \beta_W)(x\cdot y)\circ (\beta_V\otimes \beta_W).
\end{eqnarray}
By \eqref{tensor-rep-1}, \eqref{tensor-rep-2}, \eqref{tensor-rep-3}, \eqref{tensor-rep-4}, we deduce that $(V\otimes W,\beta_V\otimes \beta_W,\rho_V\otimes \beta_W+\beta_V\otimes (\rho_W-\mu_W),\mu_V\otimes \beta_W)$ is a representation of $(A,\cdot,\alpha)$.
\end{proof}

\section{Cohomologies of Hom-pre-Lie algebras}

In this section, we establish the cohomology theory of Hom-pre-Lie algebras. See \cite{DA} for more details for the cohomology theory of pre-Lie algebras.

Let $(V,\beta,\rho,\mu)$ be a representation of a Hom-pre-Lie algebra $(A,\cdot,\alpha)$. We define $\Ad^{\beta}_{\alpha}\in \gl(\Hom(A,V))$ by
\begin{equation}\label{eq:2.1}
\Ad^{\beta}_{\alpha}(f)=\beta\circ f\circ \alpha^{-1},\quad \forall f\in \Hom(A,V),
\end{equation}
and define $\hat{\rho}:A^C\longrightarrow \gl(\Hom(A,V))$ by
\begin{equation}\label{eq:2.2}
\hat{\rho}(x)(f)(y)=\rho(x)f(\alpha^{-1}(y))+\mu(y)f(\alpha^{-1}(x))-\Ad^{\beta}_{\alpha}(f)(\alpha^{-1}(x\cdot y)),
\end{equation}
for all $f\in \Hom(A,V), x,y\in A$.
\begin{thm}
Let $(A,\cdot,\alpha)$ be a Hom-pre-Lie algebra and $(V,\beta,\rho,\mu)$ its representation. Then $\hat{\rho}$ is a representation of the sub-adjacent Hom-Lie algebra $A^C$ on the vector space $\Hom(A,V)$ with respect to $\Ad^{\beta}_{\alpha}$.
\end{thm}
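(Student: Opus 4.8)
The plan is to check directly that the pair $(\Ad^{\beta}_{\alpha},\hat{\rho})$ satisfies the two axioms \eqref{hom-lie-rep-1} and \eqref{hom-lie-rep-2} of Definition \ref{defi:hom-lie representation} for the Hom-Lie algebra $A^C$ acting on $\Hom(A,V)$. The only inputs available are the definitions \eqref{eq:2.1}--\eqref{eq:2.2}, the fact that $\alpha$ is an algebra morphism (so $\alpha^{-1}(u\cdot v)=\alpha^{-1}(u)\cdot\alpha^{-1}(v)$, and likewise for any power of $\alpha$), the hypothesis that $\rho$ is a representation of $A^C$ (hence \eqref{hom-lie-rep-1}--\eqref{hom-lie-rep-2} hold for $\rho$ relative to $[\cdot,\cdot]_C$), and the compatibility identities \eqref{rep-1}--\eqref{rep-2} for the pair $(\rho,\mu)$, together with the defining left-symmetry identity of $(A,\cdot,\alpha)$. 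Thus the whole proof is a bookkeeping computation driven by these five relations.

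First I would dispatch \eqref{hom-lie-rep-1}, i.e. $\hat{\rho}(\alpha(x))\circ\Ad^{\beta}_{\alpha}=\Ad^{\beta}_{\alpha}\circ\hat{\rho}(x)$. Applying both sides to $f\in\Hom(A,V)$ and evaluating at an arbitrary $y\in A$, each side unfolds by \eqref{eq:2.2} into three summands of the same shape, and matching them termwise is immediate: the $\rho$-summand is handled by \eqref{hom-lie-rep-1} (to move $\beta$ past $\rho$), the $\mu$-summand by \eqref{rep-1} (to move $\beta$ past $\mu$), and the remaining summand --- which is purely of the form $\beta f(\alpha^{-k}(\cdots\cdot\cdots))$ --- by sliding the powers of $\alpha$ through the product using the morphism property. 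This step is short.

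The substance is \eqref{hom-lie-rep-2}: $\hat{\rho}([x,y]_C)\circ\Ad^{\beta}_{\alpha}=\hat{\rho}(\alpha(x))\circ\hat{\rho}(y)-\hat{\rho}(\alpha(y))\circ\hat{\rho}(x)$. I would apply all three composites to $f$ and evaluate at $z$, expanding everything through \eqref{eq:2.2}; each composite on the right produces nine summands, and I would sort all the resulting terms (the eighteen on the right, together with those coming from the left-hand side after writing $[x,y]_C=x\cdot y-y\cdot x$) into three families: (a) the terms containing only $\rho$, which recombine exactly via the Hom-Lie axiom \eqref{hom-lie-rep-2} for $\rho$; (b) the terms mixing one $\rho$ and one $\mu$, which collapse using \eqref{rep-2}, with \eqref{rep-1} used to align the $\alpha$- and $\beta$-powers; and (c) the terms built only from the product and $\beta$, i.e. of the form $\beta^{2}f(\cdots)$ with a double product inside, which reorganize by the left-symmetry identity of the Hom-pre-Lie algebra together with the morphism property of $\alpha$.

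The main obstacle is the bookkeeping in families (b) and (c): because $\hat{\rho}$ carries three different kinds of terms, and because \eqref{rep-2} and the definitions \eqref{eq:2.1}--\eqref{eq:2.2} introduce explicit powers of $\alpha$ and $\beta$, one must track these powers with care so that \eqref{rep-2} and the left-symmetry identity apply at exactly the right arguments. It is worth noting why no softer argument is available: one might hope to obtain $\hat{\rho}$ from the tensor-product construction of Section 3 applied to $(V,\beta,\rho,\mu)$ and the dual regular representation on $A^{*}$, via $\Hom(A,V)\cong V\otimes A^{*}$. However, the middle term $\mu(y)f(\alpha^{-1}(x))$ of \eqref{eq:2.2}, in which the evaluation point $y$ occupies the $\mu$-slot rather than $x$, is not of tensor-product form, so that shortcut does not reproduce $\hat{\rho}$ and the direct verification above seems unavoidable.
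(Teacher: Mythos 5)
Your proposal is correct and follows essentially the same route as the paper: a direct verification of \eqref{hom-lie-rep-1} and \eqref{hom-lie-rep-2} for $(\Ad^{\beta}_{\alpha},\hat{\rho})$, expanding via \eqref{eq:2.1}--\eqref{eq:2.2} and cancelling terms exactly along your three families --- pure-$\rho$ terms via the Hom-Lie axioms for $\rho$, mixed $\rho$-$\mu$ terms via \eqref{rep-1}--\eqref{rep-2}, and pure $\beta^{2}f$-terms via the Hom-pre-Lie identity and the morphism property of $\alpha$. Your closing observation that the middle term $\mu(y)f(\alpha^{-1}(x))$ blocks a tensor-product shortcut is a sensible aside, consistent with the paper's choice to verify the axioms directly.
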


\begin{proof}
By \eqref{hom-lie-rep-1} and \eqref{rep-1}, for all $x,y\in A, f\in \Hom(A,V)$, we have
\begin{eqnarray*}&&\Big(\hat{\rho}(\alpha(x))\circ \Ad^{\beta}_{\alpha}-\Ad^{\beta}_{\alpha}\circ \hat{\rho}(x)\Big)(f)(y)\\
&=&\rho(\alpha(x))\Ad^{\beta}_{\alpha}(f)(\alpha^{-1}(y))+\mu(y)\Ad^{\beta}_{\alpha}(f)(x)-(\Ad^{\beta}_{\alpha})^2(f)\big(\alpha^{-1}(\alpha(x)\cdot y)\big)\\
&&-\beta\Big(\rho(x)f(\alpha^{-2}(y))+\mu(\alpha^{-1}(y))f(\alpha^{-1}(x))-\Ad^{\beta}_{\alpha}(f)(\alpha^{-1}(x\cdot \alpha^{-1}(y)))\Big)\\
&=&\rho(\alpha(x))\beta f(\alpha^{-2}(y))+\mu(y)\beta f(\alpha^{-1}(x))-\beta^2 f\big(\alpha^{-2}(x\cdot \alpha^{-1}(y))\big)\\
&&-\beta\rho(x)f(\alpha^{-2}(y))-\beta\mu(\alpha^{-1}(y))f(\alpha^{-1}(x))+\beta^2f\big(\alpha^{-2}(x\cdot_A \alpha^{-1}(y))\big)\\
&=&0.
\end{eqnarray*}
Thus, we have
\begin{equation}\label{eq:1}
\Ad^{\beta}_{\alpha}\circ \hat{\rho}(x)=\hat{\rho}(\alpha(x))\circ \Ad^{\beta}_{\alpha}.
\end{equation}
For all $x,y,z\in A, f\in \Hom(A,V)$, by Definition \ref{defi:hom-pre representation} and the definition of a Hom-pre-Lie algebra, we have
\begin{eqnarray*}&&\Big(\hat{\rho}(\alpha(x))\circ \hat{\rho}(y)-\hat{\rho}(\alpha(y))\circ \hat{\rho}(x)-\hat{\rho}([x,y]_C)\circ\Ad^{\beta}_{\alpha}\Big)(f)(z)\\
&=&\rho(\alpha(x))\big(\hat{\rho}(y)(f)\big)(\alpha^{-1}(z))+\mu(z)\big(\hat{\rho}(y)(f)\big)(x)-\Ad^{\beta}_{\alpha}\big(\hat{\rho}(y)(f)\big)(x\cdot \alpha^{-1}(z))\\
&&-\rho(\alpha(y))\big(\hat{\rho}(x)(f)\big)(\alpha^{-1}(z))-\mu(z)\big(\hat{\rho}(x)(f)\big)(y)+\Ad^{\beta}_{\alpha}\big(\hat{\rho}(x)(f)\big)(y\cdot \alpha^{-1}(z))\\
&&-\rho([x,y]_C)\Ad^{\beta}_{\alpha}(f)(\alpha^{-1}(z))-\mu(z)\Ad^{\beta}_{\alpha}(f)(\alpha^{-1}([x,y]_C))+(\Ad^{\beta}_{\alpha})^2(f)(\alpha^{-1}([x,y]_C\cdot z))\\
&=&\rho(\alpha(x))\rho(y)f(\alpha^{-2}(z))+\rho(\alpha(x))\mu(\alpha^{-1}(z))f(\alpha^{-1}(y))-\rho(\alpha(x))\beta f(\alpha^{-2}(y)\cdot \alpha^{-3}(z))\\
&&+\mu(z)\rho(y)f(\alpha^{-1}(x))+\mu(z)\mu(x)f(\alpha^{-1}(y))-\mu(z)\beta f(\alpha^{-2}(y\cdot x))\\
&&-\beta\rho(y)f(\alpha^{-2}(x)\cdot \alpha^{-3}(z))-\beta\mu(\alpha^{-1}(x)\cdot \alpha^{-2}(z))f(\alpha^{-1}(y))\\
&&+\beta^2 f\big(\alpha^{-2}(y)\cdot (\alpha^{-3}(x)\cdot \alpha^{-4}(z))\big)
-\rho(\alpha(y))\rho(x)f(\alpha^{-2}(z))\\
&&-\rho(\alpha(y))\mu(\alpha^{-1}(z))f(\alpha^{-1}(x))
+\rho(\alpha(y))\beta f(\alpha^{-2}(x)\cdot \alpha^{-3}(z))
-\mu(z)\rho(x)f(\alpha^{-1}(y))\\
&&-\mu(z)\mu(y)f(\alpha^{-1}(x))+\mu(z)\beta f(\alpha^{-2}(x\cdot y))
+\beta\rho(x)f(\alpha^{-2}(y)\cdot \alpha^{-3}(z))\\
&&+\beta\mu(\alpha^{-1}(y)\cdot \alpha^{-2}(z))f(\alpha^{-1}(x))-\beta^2 f\big(\alpha^{-2}(x)\cdot (\alpha^{-3}(y)\cdot \alpha^{-4}(z))\big)\\
&&-\rho([x,y]_C)\beta f(\alpha^{-2}(z))-\mu(z)\beta f(\alpha^{-2}([x,y]_C))+\beta^2 f(\alpha^{-3}([x,y]_C\cdot z))\\
&=&\rho(\alpha(x))\rho(y)f(\alpha^{-2}(z))-\rho(\alpha(y))\rho(x)f(\alpha^{-2}(z))\\
&&-\mu(z)\beta f(\alpha^{-2}(y\cdot x))+\mu(z)\beta f(\alpha^{-2}(x\cdot y))\\
&&+\beta^2f\big(\alpha^{-2}(y)\cdot (\alpha^{-3}(x)\cdot \alpha^{-4}(z))\big)-\beta^2f\big(\alpha^{-2}(x)\cdot (\alpha^{-3}(y)\cdot \alpha^{-4}(z))\big)\\
&&-\rho([x,y]_C)\beta f(\alpha^{-2}(z))-\mu(z)\beta f(\alpha^{-2}([x,y]_C))+\beta^2 f(\alpha^{-3}([x,y]_C\cdot z))\\
&=&0,
\end{eqnarray*}
which implies that
\begin{equation}\label{eq:2}
\hat{\rho}([x,y]_C)\circ\Ad^{\beta}_{\alpha}=\hat{\rho}(\alpha(x))\circ\hat{\rho}(y)-\hat{\rho}(\alpha(y))\circ\hat{\rho}(x).
\end{equation}
By \eqref{eq:1} and \eqref{eq:2}, we deduce that $\hat{\rho}$ is a representation of the Hom-Lie algebra $A^C$ on $\Hom(A,V)$ with respect to $\Ad^{\beta}_{\alpha}$.
\end{proof}

\emptycomment{
\begin{proof}
By \eqref{eq:2.1} and \eqref{eq:2.2}, for all $x,y\in A, f\in \Hom(A,V)$, we have
\begin{eqnarray*}&&(\Ad^{\beta}_{\alpha}\circ \hat{\rho}(x))(f)(y)\\
&=&\beta\circ \hat{\rho}(x)(f)\circ\alpha^{-1}(y)\\
&=&\beta(\rho(x)f(\alpha^{-2}(y))+\mu(\alpha^{-1}(y))f(\alpha^{-1}(x))-\Ad^{\beta}_{\alpha}(f)(\alpha^{-1}(x\cdot_A \alpha^{-1}(y))))\\
&=&\beta\rho(x)f(\alpha^{-2}(y))+\beta\mu(\alpha^{-1}(y))f(\alpha^{-1}(x))-\beta^2f(\alpha^{-2}(x\cdot_A \alpha^{-1}(y))),
\end{eqnarray*}
and
\begin{eqnarray*}&&(\hat{\rho}(\alpha(x))\circ \Ad^{\beta}_{\alpha})(f)(y)\\
&=&\rho(\alpha(x))\Ad^{\beta}_{\alpha}(f)(\alpha^{-1}(y))+\mu(y)\Ad^{\beta}_{\alpha}(f)(x)-(\Ad^{\beta}_{\alpha})^2(f)(\alpha^{-1}(\alpha(x)\cdot_A y))\\
&=&\rho(\alpha(x))\beta f(\alpha^{-2}(y))+\mu(y)\beta f(\alpha^{-1}(x))-\beta^2 f(\alpha^{-2}(x\cdot_A \alpha^{-1}(y))).
\end{eqnarray*}
Since $\rho$ is a representation of the Hom-Lie algebra $A^C$ on $V$ with respect to $\beta$, by condition $\rm(i)$ in Definition \ref{defi:hom-pre representation}, we have
\begin{equation}\label{eq:1}
\Ad^{\beta}_{\alpha}\circ \hat{\rho}(x)=\hat{\rho}(\alpha(x))\circ \Ad^{\beta}_{\alpha}.
\end{equation}
Then by \eqref{eq:2.1} and \eqref{eq:2.2}, for all $x,y,z\in A, f\in \Hom(A,V)$, we have
\begin{eqnarray*}&&(\hat{\rho}([x,y]_A)\circ\Ad^{\beta}_{\alpha})(f)(z)\\
&=&\rho([x,y]_A)\Ad^{\beta}_{\alpha}(f)(\alpha^{-1}(z))+\mu(z)\Ad^{\beta}_{\alpha}(f)(\alpha^{-1}([x,y]_A))-(\Ad^{\beta}_{\alpha})^2(f)(\alpha^{-1}([x,y]_A\cdot_A z))\\
&=&\rho([x,y]_A)\beta f(\alpha^{-2}(z))+\mu(z)\beta f(\alpha^{-2}([x,y]_A))-\beta^2 f(\alpha^{-3}([x,y]_A\cdot_A z)),
\end{eqnarray*}
and
\begin{eqnarray*}&&(\hat{\rho}(\alpha(x))\circ \hat{\rho}(y)-\hat{\rho}(\alpha(y))\circ \hat{\rho}(x))(f)(z)\\
&=&\rho(\alpha(x))(\hat{\rho}(y)(f))(\alpha^{-1}(z))+\mu(z)(\hat{\rho}(y)(f))(x)-\Ad^{\beta}_{\alpha}(\hat{\rho}(y)(f))(x\cdot_A \alpha^{-1}(z))\\
&&-\rho(\alpha(y))(\hat{\rho}(x)(f))(\alpha^{-1}(z))-\mu(z)(\hat{\rho}(x)(f))(y)+\Ad^{\beta}_{\alpha}(\hat{\rho}(x)(f))(y\cdot_A \alpha^{-1}(z))\\
&=&\rho(\alpha(x))\rho(y)f(\alpha^{-2}(z))+\rho(\alpha(x))\mu(\alpha^{-1}(z))f(\alpha^{-1}(y))-\rho(\alpha(x))\beta f(\alpha^{-2}(y)\cdot_A \alpha^{-3}(z))\\
&&+\mu(z)\rho(y)f(\alpha^{-1}(x))+\mu(z)\mu(x)f(\alpha^{-1}(y))-\mu(z)\beta f(\alpha^{-2}(y\cdot_A x))\\
&&-\beta\rho(y)f(\alpha^{-2}(x)\cdot_A \alpha^{-3}(z))-\beta\mu(\alpha^{-1}(x)\cdot_A \alpha^{-2}(z))f(\alpha^{-1}(y))\\
&&+\beta^2 f(\alpha^{-2}(y)\cdot_A (\alpha^{-3}(x)\cdot_A \alpha^{-4}(z)))
-\rho(\alpha(y))\rho(x)f(\alpha^{-2}(z))\\
&&-\rho(\alpha(y))\mu(\alpha^{-1}(z))f(\alpha^{-1}(x))
+\rho(\alpha(y))\beta f(\alpha^{-2}(x)\cdot_A \alpha^{-3}(z))
-\mu(z)\rho(x)f(\alpha^{-1}(y))\\
&&-\mu(z)\mu(y)f(\alpha^{-1}(x))+\mu(z)\beta f(\alpha^{-2}(x\cdot_A y))
+\beta\rho(x)f(\alpha^{-2}(y)\cdot_A \alpha^{-3}(z))\\
&&+\beta\mu(\alpha^{-1}(y)\cdot_A \alpha^{-2}(z))f(\alpha^{-1}(x))-\beta^2 f(\alpha^{-2}(x)\cdot_A (\alpha^{-3}(y)\cdot_A \alpha^{-4}(z))).
\end{eqnarray*}
By conditions $\rm(i)$ and $\rm(ii)$ in Definition \ref{defi:hom-pre representation}, we have
\begin{eqnarray*}
 \rho(\alpha(x))\circ\mu(\alpha^{-1}(z))+\mu(z)\circ\mu(x)-\beta\circ\mu(\alpha^{-1}(x)\cdot_A \alpha^{-2}(z))-\mu(z)\circ\rho(x)=0,\\
 \mu(z)\circ\rho(y)-\rho(\alpha(y))\circ\mu(\alpha^{-1}(z))-\mu(z)\circ\mu(y)+\beta\circ\mu(\alpha^{-1}(y)\cdot_A \alpha^{-2}(z))=0.
\end{eqnarray*}
Thus, we have
\begin{eqnarray*}&&(\hat{\rho}(\alpha(x))\hat{\rho}(y)-\hat{\rho}(\alpha(y))\hat{\rho}(x))f)(z)\\
&=&\rho(\alpha(x))\rho(y)f(\alpha^{-2}(z))-\rho(\alpha(y))\rho(x)f(\alpha^{-2}(z))\\
&&-\mu(z)\beta f(\alpha^{-2}(y\cdot_A x))+\mu(z)\beta f(\alpha^{-2}(x\cdot_A y))\\
&&+\beta^2f(\alpha^{-2}(y)\cdot_A (\alpha^{-3}(x)\cdot_A \alpha^{-4}(z)))-\beta^2f(\alpha^{-2}(x)\cdot_A (\alpha^{-3}(y)\cdot_A \alpha^{-4}(z))).
\end{eqnarray*}

Since $\rho$ is a representation of the Hom-Lie algebra $A^C$ on $V$, by the definition of Hom-pre-Lie algebra, we have
\begin{equation}\label{eq:2}
\hat{\rho}([x,y]_C)\circ\Ad^{\beta}_{\alpha}=\hat{\rho}(\alpha(x))\circ\hat{\rho}(y)-\hat{\rho}(\alpha(y))\circ\hat{\rho}(x).
\end{equation}
By \eqref{eq:1} and \eqref{eq:2}, we deduce that conditions $\rm(i)$ and $\rm(ii)$ in Definition \ref{defi:hom-lie representation} hold. Thus, $\hat{\rho}$ is a representation of the Hom-Lie algebra $A^C$ on $\Hom(A,V)$ with respect to $\Ad^{\beta}_{\alpha}$.
\end{proof}
}

Let $(V,\beta,\rho,\mu)$ be a representation of a Hom-pre-Lie algebra $(A,\cdot,\alpha)$. The set of $(n+1)$-cochains is given by
\begin{equation}
 C^{n+1}(A;V)=\Hom(\wedge^n A\otimes A,V),\quad
 \forall n\geq 0.
\end{equation}
For all $f \in C^n(A;V),~ x_1,\dots,x_{n+1} \in A$, define the operator
$\partial:C^n(A;V)\longrightarrow C^{n+1}(A;V)$ by
\begin{eqnarray}\label{eq:12.1}
 &&(\partial f)(x_1,\dots,x_{n+1})\\
 \nonumber&=&\sum_{i=1}^n(-1)^{i+1}\rho(x_i)f(\alpha^{-1}(x_1),\dots,\widehat{\alpha^{-1}(x_i)},\dots,\alpha^{-1}(x_{n+1}))\\
\nonumber &&+\sum_{i=1}^n(-1)^{i+1}\mu(x_{n+1})f(\alpha^{-1}(x_1),\dots,\widehat{\alpha^{-1}(x_i)},\dots,\alpha^{-1}(x_n),\alpha^{-1}(x_i))\\
\nonumber &&-\sum_{i=1}^n(-1)^{i+1}\beta f(\alpha^{-1}(x_1),\dots,\widehat{\alpha^{-1}(x_i)}\dots,\alpha^{-1}(x_n),\alpha^{-2}(x_i)\cdot \alpha^{-2}(x_{n+1}))\\
\nonumber&&+\sum_{1\leq i<j\leq n}(-1)^{i+j}\beta f([\alpha^{-2}(x_i),\alpha^{-2}(x_j)]_C,\alpha^{-1}(x_1),\dots,\widehat{\alpha^{-1}(x_i)},\dots,\widehat{\alpha^{-1}(x_j)},\dots,\alpha^{-1}(x_{n+1})).
\end{eqnarray}
To prove that the operator $\partial $ is indeed a coboundary operator, we need some preparations. For vector spaces $A$ and $V$, we have the natural isomorphism $$\Phi:\Hom(\wedge^{n-1} A^C, \Hom(A,V))\longrightarrow \Hom(\wedge^{n-1} A\otimes A,V)$$ defined by
\begin{equation}\label{eq:3.1}
\Phi(\omega)(x_1,\dots,x_{n-1},x_n)=\omega(x_1,\dots,x_{n-1})(x_n),\quad \forall x_1,\dots,x_{n-1},x_n\in A.
\end{equation}
\begin{pro}\label{pro:commutative}
Let $(V,\beta,\rho,\mu)$ be a representation of a Hom-pre-Lie algebra $(A,\cdot,\alpha)$. Then we have the following commutative diagram
$$\xymatrix{
  \Hom(\wedge^{n-1} A^C, \Hom(A,V))\ar[d]_{\dM}  \ar[r]^{\quad
\Phi} & \Hom(\wedge^{n-1} A\otimes A,V) \ar[d]^{\partial}  \\
     \Hom(\wedge^n A^C, \Hom(A,V))\ar[r]^{\quad
\Phi}   &\Hom(\wedge^n A\otimes A,V),              }$$
where $\dM$ is the coboundary operator of the sub-adjacent Hom-Lie algebra $A^C$ with the coefficient in the representation $(\Hom(A,V),\Ad^{\beta}_{\alpha},\hat{\rho})$ and $\partial$ is defined by \eqref{eq:12.1}.
\end{pro}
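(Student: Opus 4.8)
The plan is to verify the identity $\partial\circ\Phi=\Phi\circ\dM$ by a direct term-by-term comparison, evaluating both composites on an arbitrary cochain $\omega\in\Hom(\wedge^{n-1}A^C,\Hom(A,V))$ and an arbitrary tuple $(x_1,\dots,x_{n+1})\in A^{\times(n+1)}$. Since $\Phi$ is merely the uncurrying isomorphism \eqref{eq:3.1}, we have $(\Phi\circ\dM)(\omega)(x_1,\dots,x_{n+1})=\big((\dM\omega)(x_1,\dots,x_n)\big)(x_{n+1})$, so everything reduces to expanding this right-hand side and recognizing it as $(\partial\,\Phi(\omega))(x_1,\dots,x_{n+1})$ given by \eqref{eq:12.1}.

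First I would write out $(\dM\omega)(x_1,\dots,x_n)\in\Hom(A,V)$ using the Hom-Lie coboundary formula recalled in Section 2, applied to the Hom-Lie algebra $A^C$ with coefficients in the representation $(\Hom(A,V),\Ad^\beta_\alpha,\hat{\rho})$. This produces one sum of $\hat{\rho}$-terms indexed by $1\le i\le n$ and one sum of $\Ad^\beta_\alpha$-terms indexed by pairs $1\le i<j\le n$, each an element of $\Hom(A,V)$. Evaluating at $x_{n+1}$ and substituting the definitions \eqref{eq:2.1} of $\Ad^\beta_\alpha$ and \eqref{eq:2.2} of $\hat{\rho}$ is the crux: a single term $\hat{\rho}(x_i)(f)(x_{n+1})$, with $f=\omega(\alpha^{-1}(x_1),\dots,\widehat{\alpha^{-1}(x_i)},\dots,\alpha^{-1}(x_n))$, splits as $\rho(x_i)f(\alpha^{-1}(x_{n+1}))+\mu(x_{n+1})f(\alpha^{-1}(x_i))-\beta f(\alpha^{-2}(x_i\cdot x_{n+1}))$. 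Reading $f(\cdot)=\Phi(\omega)(\dots,\cdot)$ through \eqref{eq:3.1}, these three families are exactly the first, second and third sums of \eqref{eq:12.1} applied to $\Phi(\omega)$; here I would use that $\alpha$, and hence $\alpha^{-1}$, is multiplicative, so $\alpha^{-2}(x_i\cdot x_{n+1})=\alpha^{-2}(x_i)\cdot\alpha^{-2}(x_{n+1})$, to match the third sum verbatim.

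For the double sum I would use $\Ad^\beta_\alpha(g)(x_{n+1})=\beta\,g(\alpha^{-1}(x_{n+1}))$ from \eqref{eq:2.1} together with \eqref{eq:3.1}, which turns each term $\Ad^\beta_\alpha\big(\omega([\alpha^{-2}(x_i),\alpha^{-2}(x_j)]_C,\dots)\big)(x_{n+1})$ into $\beta\,\Phi(\omega)([\alpha^{-2}(x_i),\alpha^{-2}(x_j)]_C,\dots,\alpha^{-1}(x_{n+1}))$; this is precisely the fourth (bracket) sum of \eqref{eq:12.1}. Since the signs $(-1)^{i+1}$ and $(-1)^{i+j}$ and the hat-patterns carry over unchanged from the Hom-Lie formula to \eqref{eq:12.1}, the four families assemble to $(\partial\,\Phi(\omega))(x_1,\dots,x_{n+1})$, yielding the asserted commutativity.

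I expect the only genuine difficulty to be bookkeeping rather than anything conceptual: keeping the omitted indices, the distinguished last slot $x_{n+1}$, and the powers of $\alpha^{-1}$ aligned between the two formulas, and checking that each of the four families appears with the correct sign. There is no real obstacle, because $\hat{\rho}$ in \eqref{eq:2.2} was manufactured so that its three summands reproduce exactly the $\rho$-, $\mu$- and $\beta$-terms of $\partial$; once this is seen, the verification is forced.
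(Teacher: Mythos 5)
Your proposal is correct and is essentially the paper's own proof: both arguments unwind the Hom-Lie coboundary for $A^C$ with coefficients in $(\Hom(A,V),\Ad^{\beta}_{\alpha},\hat{\rho})$, substitute the definitions \eqref{eq:2.1}, \eqref{eq:2.2} and \eqref{eq:3.1}, and match the resulting four families of terms (using multiplicativity of $\alpha^{-1}$ for the $\beta$-term) against \eqref{eq:12.1}. The only difference is direction of travel --- the paper expands $\partial(\Phi(\omega))$ and collapses it to $\Phi(\dM\omega)$, while you expand $\Phi(\dM\omega)$ --- which is the same computation read in reverse.
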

\begin{proof}
For all  $\omega\in \Hom(\wedge^{n-1} A^C, \Hom(A,V))$, we have
\begin{eqnarray*}&&\partial (\Phi(\omega))(x_1,\dots,x_{n+1})\\
&=&\sum_{i=1}^n(-1)^{i+1}\rho(x_i)\Phi(\omega)(\alpha^{-1}(x_1),\dots,\widehat{\alpha^{-1}(x_i)},\dots,\alpha^{-1}(x_{n+1}))\\
&&+\sum_{i=1}^n(-1)^{i+1}\mu(x_{n+1})\Phi(\omega)(\alpha^{-1}(x_1),\dots,\widehat{\alpha^{-1}(x_i)},\dots,\alpha^{-1}(x_n),\alpha^{-1}(x_i))\\
&&-\sum_{i=1}^n(-1)^{i+1}\beta \Phi(\omega)(\alpha^{-1}(x_1),\dots,\widehat{\alpha^{-1}(x_i)}\dots,\alpha^{-1}(x_n),\alpha^{-2}(x_i)\cdot \alpha^{-2}(x_{n+1}))\\
&&+\sum_{1\leq i<j\leq n}(-1)^{i+j}\beta \Phi(\omega)([\alpha^{-2}(x_i),\alpha^{-2}(x_j)]_C,\alpha^{-1}(x_1),\dots,\widehat{\alpha^{-1}(x_i)},\dots,\widehat{\alpha^{-1}(x_j)},\dots,\alpha^{-1}(x_{n+1}))\\
&\stackrel{\eqref{eq:3.1}}{=}&\sum_{i=1}^n(-1)^{i+1}\rho(x_i)\omega(\alpha^{-1}(x_1),\dots,\widehat{\alpha^{-1}(x_i)},\dots,\alpha^{-1}(x_n))(\alpha^{-1}(x_{n+1}))\\
&&+\sum_{i=1}^n(-1)^{i+1}\mu(x_{n+1})\omega(\alpha^{-1}(x_1),\dots,\widehat{\alpha^{-1}(x_i)},\dots,\alpha^{-1}(x_n))(\alpha^{-1}(x_i))\\
&&-\sum_{i=1}^n(-1)^{i+1}\beta \omega(\alpha^{-1}(x_1),\dots,\widehat{\alpha^{-1}(x_i)}\dots,\alpha^{-1}(x_n))(\alpha^{-2}(x_i)\cdot \alpha^{-2}(x_{n+1}))\\
&&+\sum_{1\leq i<j\leq n}(-1)^{i+j}\beta \omega([\alpha^{-2}(x_i),\alpha^{-2}(x_j)]_C,\alpha^{-1}(x_1),\dots,\widehat{\alpha^{-1}(x_i)},\dots,\widehat{\alpha^{-1}(x_j)},\dots,\alpha^{-1}(x_n))(\alpha^{-1}(x_{n+1}))\\
&\stackrel{\eqref{eq:2.1}}{=}&\sum_{i=1}^n(-1)^{i+1}\rho(x_i)\omega(\alpha^{-1}(x_1),\dots,\widehat{\alpha^{-1}(x_i)},\dots,\alpha^{-1}(x_n))(\alpha^{-1}(x_{n+1}))\\
&&+\sum_{i=1}^n(-1)^{i+1}\mu(x_{n+1})\omega(\alpha^{-1}(x_1),\dots,\widehat{\alpha^{-1}(x_i)},\dots,\alpha^{-1}(x_n))(\alpha^{-1}(x_i))\\
&&-\sum_{i=1}^n(-1)^{i+1}\Ad^{\beta}_{\alpha}\omega(\alpha^{-1}(x_1),\dots,\widehat{\alpha^{-1}(x_i)}\dots,\alpha^{-1}(x_n))(\alpha^{-1}(x_i)\cdot \alpha^{-1}(x_{n+1}))\\
&&+\sum_{1\leq i<j\leq n}(-1)^{i+j}\Ad^{\beta}_{\alpha} \omega([\alpha^{-2}(x_i),\alpha^{-2}(x_j)]_C,\alpha^{-1}(x_1),\dots,\widehat{\alpha^{-1}(x_i)},\dots,\widehat{\alpha^{-1}(x_j)},\dots,\alpha^{-1}(x_n))(x_{n+1})\\
&\stackrel{\eqref{eq:2.2}}{=}&\sum_{i=1}^n(-1)^{i+1}\hat{\rho}(x_i)\omega(\alpha^{-1}(x_1),\dots,\widehat{\alpha^{-1}(x_i)},\dots,\alpha^{-1}(x_n))(x_{n+1})\\
&&+\sum_{1\leq i<j\leq n}(-1)^{i+j}\Ad^{\beta}_{\alpha} \omega([\alpha^{-2}(x_i),\alpha^{-2}(x_j)]_C,\alpha^{-1}(x_1),\dots,\widehat{\alpha^{-1}(x_i)},\dots,\widehat{\alpha^{-1}(x_j)},\dots,\alpha^{-1}(x_n))(x_{n+1})\\
&=&\Phi(\dM \omega)(x_1,\dots,x_n,x_{n+1}),
\end{eqnarray*}
we deduce $\partial \circ \Phi=\Phi\circ \dM $. The proof is finished.
\end{proof}

\begin{thm}\label{thm:operator}
The operator $\partial:C^n(A;V)\longrightarrow C^{n+1}(A;V)$ defined as above satisfies $\partial\circ\partial=0$.
\end{thm}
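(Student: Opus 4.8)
The plan is to leverage the commutative diagram of Proposition \ref{pro:commutative} together with the fact that $\dM$ is a genuine coboundary operator, thereby transporting the identity $\dM\circ\dM=0$ across the isomorphism $\Phi$ to obtain $\partial\circ\partial=0$. Concretely, by the preceding theorem the triple $(\Hom(A,V),\Ad^{\beta}_{\alpha},\hat{\rho})$ is a representation of the sub-adjacent Hom-Lie algebra $A^C$, so the associated Hom-Lie coboundary operator $\dM$ on $\oplus_k\Hom(\wedge^k A^C,\Hom(A,V))$ satisfies $\dM\circ\dM=0$ by the general theory of Hom-Lie cohomology recalled in Section 2.

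First I would record the naturality identity $\partial\circ\Phi=\Phi\circ\dM$ at two consecutive degrees. Writing $\Phi_k:\Hom(\wedge^k A^C,\Hom(A,V))\longrightarrow C^{k+1}(A;V)$ for the isomorphism of \eqref{eq:3.1}, Proposition \ref{pro:commutative} gives $\partial\circ\Phi_k=\Phi_{k+1}\circ\dM$ for every $k\geq 0$. Composing these two instances yields, for any $\omega\in\Hom(\wedge^{k} A^C,\Hom(A,V))$,
\begin{eqnarray*}
\partial\big(\partial(\Phi_k(\omega))\big)&=&\partial\big(\Phi_{k+1}(\dM\omega)\big)\\
&=&\Phi_{k+2}\big(\dM(\dM\omega)\big)\\
&=&\Phi_{k+2}(0)=0.
\end{eqnarray*}

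Finally, since $\Phi_k$ is an isomorphism---in particular surjective---every cochain in $C^{k+1}(A;V)=\Hom(\wedge^k A\otimes A,V)$ is of the form $\Phi_k(\omega)$ for some $\omega$; hence $\partial\circ\partial$ vanishes on all of $C^{k+1}(A;V)$ for each $k\geq 0$, which is exactly $\partial\circ\partial=0$. There is no serious obstacle here: all the substantive content---verifying that $\hat{\rho}$ is a representation and that $\partial$ intertwines with $\dM$ through $\Phi$---has already been discharged in the preceding theorem and in Proposition \ref{pro:commutative}, so this final step is a purely formal diagram chase. The only point requiring a little care is bookkeeping the degree shift in $\Phi$, making sure the two applications of the commuting square are stacked at the correct consecutive levels.
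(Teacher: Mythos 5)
Your proof is correct and is essentially the paper's own argument: the paper likewise writes $\partial=\Phi\circ\dM\circ\Phi^{-1}$ from Proposition \ref{pro:commutative} and concludes $\partial^2=\Phi\circ\dM\circ\dM\circ\Phi^{-1}=0$, which is just your stacked commuting squares plus surjectivity of $\Phi$ phrased as a conjugation. Your extra care with the degree indices on $\Phi$ is a harmless refinement, not a different route.
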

\begin{proof}
By Proposition \ref{pro:commutative}, we have $\partial=\Phi\circ \dM \circ \Phi^{-1}$.
Thus, we obtain $\partial^2=\Phi\circ \dM\circ\dM\circ \Phi^{-1}=0$. The proof is finished.
\end{proof}

Denote the set of closed $n$-cochains by $Z^n(A;V)$ and the set of exact $n$-cochains by $B^n(A;V)$. We denote by $H^n(A;V)=Z^n(A;V)/B^n(A;V)$ the corresponding cohomology groups of the Hom-pre-Lie algebra $(A,\cdot,\alpha)$ with the coefficient in the representation $(V,\beta,\rho,\mu)$. There is a close relation between the cohomologies of Hom-pre-Lie algebras and the corresponding sub-adjacent Hom-Lie algebras.
\begin{pro}
Let $(A,\cdot,\alpha)$ be a Hom-pre-Lie algebra and $(V,\beta,\rho,\mu)$ its representation. Then the cohomology groups $\huaH^{n-1}(A^C;\Hom(A,V))$ and $H^n(A;V)$ are isomorphic.
\end{pro}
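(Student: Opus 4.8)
The plan is to recognize that Proposition~\ref{pro:commutative} has already done the essential work. For each $n$ it produces a linear isomorphism $\Phi$ fitting into a commutative square with the two coboundary operators, so that $\partial\circ\Phi=\Phi\circ\dM$. Assembling these isomorphisms over all degrees, $\Phi$ becomes an isomorphism of cochain complexes
\[
\Phi_\bullet\colon\Big(\bigoplus_{k}\Hom(\wedge^k A^C,\Hom(A,V)),\ \dM\Big)\ \longrightarrow\ \Big(\bigoplus_{k}C^{k+1}(A;V),\ \partial\Big),
\]
carrying the cochain complex of the sub-adjacent Hom-Lie algebra $A^C$ with values in $(\Hom(A,V),\Ad^{\beta}_{\alpha},\hat{\rho})$ onto the degree-shifted cochain complex of the Hom-pre-Lie algebra $(A,\cdot,\alpha)$ with values in $(V,\beta,\rho,\mu)$. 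Since an isomorphism of cochain complexes induces isomorphisms on cohomology, the stated isomorphism $\huaH^{n-1}(A^C;\Hom(A,V))\cong H^n(A;V)$ follows at once.

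To spell this out, I would first fix the degree bookkeeping: the $(n-1)$-cochains $\Hom(\wedge^{n-1}A^C,\Hom(A,V))$ of the Hom-Lie complex are carried by the isomorphism $\Phi$ of \eqref{eq:3.1} precisely onto $\Hom(\wedge^{n-1}A\otimes A,V)=C^n(A;V)$, the $n$-cochains of the Hom-pre-Lie complex. Next, from $\partial\circ\Phi=\Phi\circ\dM$ I would check that $\Phi$ respects cocycles and coboundaries: if $\dM\omega=0$ then $\partial(\Phi\omega)=\Phi(\dM\omega)=0$, so $\Phi$ sends $\huaZ^{n-1}(A^C;\Hom(A,V))$ into $Z^n(A;V)$; and if $\eta=\dM\omega$ is exact then $\Phi\eta=\partial(\Phi\omega)$ is exact, so $\Phi$ sends $\huaB^{n-1}(A^C;\Hom(A,V))$ into $B^n(A;V)$. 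Because $\Phi$ is a bijection and its inverse satisfies $\dM\circ\Phi^{-1}=\Phi^{-1}\circ\partial$, the same argument applied to $\Phi^{-1}$ shows these inclusions are equalities of images, so $\Phi$ restricts to bijections $\huaZ^{n-1}\cong Z^n$ and $\huaB^{n-1}\cong B^n$.

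Finally, a linear bijection of the pairs $(\huaZ^{n-1},\huaB^{n-1})\to(Z^n,B^n)$ descends to a well-defined linear isomorphism on the quotients $\huaH^{n-1}(A^C;\Hom(A,V))=\huaZ^{n-1}/\huaB^{n-1}\to Z^n/B^n=H^n(A;V)$, which completes the argument. The main obstacle does not lie here; it was already overcome in establishing the commuting square of Proposition~\ref{pro:commutative}, where the intricate formula \eqref{eq:12.1} for $\partial$ had to be matched against the Hom-Lie differential $\dM$ through $\Phi$. Once that identity is available, the present statement is a purely formal consequence of the standard fact that a chain isomorphism induces isomorphisms on cohomology, and the only point requiring a little care is the degree shift by one between the two complexes.
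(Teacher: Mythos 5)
Your proposal is correct and follows essentially the same route as the paper: the paper likewise observes that Proposition~\ref{pro:commutative} makes $\Phi$ an isomorphism of cochain complexes from $\big(\huaC^{\ast-1}(A^C;\Hom(A,V)),\dM\big)$ to $\big(C^{\ast}(A;V),\partial\big)$, which therefore induces the isomorphism $\huaH^{n-1}(A^C;\Hom(A,V))\cong H^n(A;V)$. Your additional verification that cocycles and coboundaries correspond under $\Phi$ and $\Phi^{-1}$ is just the standard argument the paper leaves implicit, so there is no substantive difference.
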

\proof
By Proposition \ref{pro:commutative}, we deduce that $\Phi$ is an isomorphism from the cochain complex $\big(\huaC^{\ast-1}(A^C;\Hom(A,V)),\dM\big)$ to the cochain complex $\big(C^\ast(A;V),\partial\big)$. Thus, $\Phi$ induces an isomorphism $\Phi_\ast$ from   $\huaH^{\ast-1}(A^C;\Hom(A,V))$ to $H^\ast(A;V).$
\qed\vspace{3mm}

We use $\partial_{\reg}$ and $\partial_T$ to denote the coboundary operator of $(A,\cdot,\alpha)$ with the coefficients in the regular representation and the trivial representation respectively. The corresponding cohomology groups will be denoted by $H^n(A;A)$ and $H^n(A)$ respectively. The two cohomology groups $H^2(A;A)$ and $H^2(A)$ will play important roles in the later study.

\section{Linear deformations of Hom-pre-Lie algebras}
In this section, we study linear deformations of Hom-pre-Lie algebras using the cohomology defined in the previous section, and introduce the notion of a Nijenhuis operator on a Hom-pre-Lie algebra. We show that a Nijenhuis operator gives rise to a trivial deformation.

\vspace{2mm}
Let $(A,\cdot,\alpha)$ be a Hom-pre-Lie algebra. In the sequel, we will also denote the Hom-pre-Lie multiplication $\cdot$ by $\Omega$.

\begin{defi}
Let $(A,\Omega,\alpha)$ be a Hom-pre-Lie algebra and $\omega\in C^2(A;A)$ be a bilinear operator, satisfying $\omega\circ (\alpha\otimes\alpha)=\alpha\circ \omega$. If  $$\Omega_t=\Omega+t\omega$$ is still a Hom-pre-Lie multiplication on $A$ for all $t$, we say that $\omega$ generates a {\bf (one-parameter) linear deformation} of a Hom-pre-Lie algebra $(A,\Omega,\alpha)$.
    \end{defi}
It is direct to check that $\Omega_t=\Omega+t\omega$ is a linear deformation of a Hom-pre-Lie algebra $(A,\Omega,\alpha)$ if and only if
 for any $x,y,z\in \g$, the following two equalities are satisfied:
\begin{eqnarray}
\nonumber\omega(x,y)\cdot \alpha(z)+\omega(x\cdot y,\alpha(z))-\alpha(x)\cdot \omega(y,z)-\omega(\alpha(x),y\cdot z) &&\\
\label{eq:4.1}  -\omega(y,x)\cdot \alpha(z)-\omega(y\cdot x,\alpha(z))+\alpha(y)\cdot \omega(x,z)+\omega(\alpha(y),x\cdot z)&=&0,\\
\label{eq:4.2}\omega(\omega(x,y),\alpha(z))-\omega(\alpha(x),\omega(y,z))-\omega(\omega(y,x),\alpha(z))+\omega(\alpha(y),\omega(x,z))&=&0.
\end{eqnarray}
Obviously, \eqref{eq:4.1} means that $\omega$ is a $2$-cocycle of the Hom-pre-Lie algebra $(A,\cdot,\alpha)$ with the coefficient in the regular representation, i.e. $$\partial_{\reg} \omega(\alpha(x),\alpha(y),\alpha(z))=0.$$ Furthermore, \eqref{eq:4.2} means that $(A,\omega,\alpha)$ is  a Hom-pre-Lie algebra.

\begin{defi}
Let $(A,\Omega,\alpha)$ be a Hom-pre-Lie algebra. Two linear deformations $\Omega_t^1=\Omega+t\omega_1$ and
$\Omega_t^2=\Omega+t\omega_2$ are said to be {\bf equivalent} if there exists a
linear operator $N\in \gl(A)$ such that $T_t={\Id}+tN$ is a
Hom-pre-Lie algebra homomorphism from $(A,\Omega_t^2,\alpha)$ to $(A,\Omega_t^1,\alpha)$. In particular, a
linear deformation $\Omega_t=\Omega+t\omega$ of a Hom-pre-Lie algebra $(A,\Omega,\alpha)$ is said to be {\bf trivial} if there exists
 a
linear operator $N\in \gl(A)$ such that $T_t={\Id}+tN$ is a
Hom-pre-Lie algebra homomorphism from  $(A,\Omega_t,\alpha)$ to  $(A,\Omega,\alpha)$.
\end{defi}

Let $T_t={\Id}+tN$ be a homomorphism from
$(A,\Omega_t^2,\alpha)$ to $(A,\Omega_t^1,\alpha)$. By \eqref{homo-2}, we have
\begin{equation}\label{equi-deformation-0}
N\circ\alpha=\alpha\circ N.
\end{equation}
Then by \eqref{homo-1} we obtain
\begin{eqnarray}
\label{equi-deformation-1}\omega_2(x,y)-\omega_1(x,y)=N(x)\cdot y+x\cdot N(y)-N(x\cdot y),\\
\label{equi-deformation-2}\omega_1(x,N(y))+\omega_1(N(x),y)=N(\omega_2(x,y))-N(x)\cdot N(y),\\
\label{equi-deformation-3}\omega_1(N(x),N(y))=0.
\end{eqnarray}
Note that \eqref{equi-deformation-1} means that $\omega_2-\omega_1=\partial_{\reg} (N\circ \alpha)$. Thus, we have

\begin{thm}\label{thm:iso3} Let $(A,\Omega,\alpha)$ be a Hom-pre-Lie algebra.
  If two linear deformations $\Omega_t^1=\Omega+t\omega_1$ and
$\Omega_t^2=\Omega+t\omega_2$ are equivalent, then $\omega_1$ and $\omega_2$ are in the same cohomology class of $H^2(A;A)$.
\end{thm}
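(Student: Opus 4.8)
The plan is to show directly that $\omega_2-\omega_1$ is an exact $2$-cochain, i.e.\ lies in $B^2(A;A)$, so that $\omega_1$ and $\omega_2$ represent the same class in $H^2(A;A)$. Since both $\Omega_t^1$ and $\Omega_t^2$ are linear deformations, each of $\omega_1,\omega_2$ satisfies \eqref{eq:4.1}, which as already observed is precisely the statement that $\partial_{\reg}\omega_i=0$; hence $\omega_1,\omega_2\in Z^2(A;A)$ are both $2$-cocycles, and it suffices to exhibit a $1$-cochain whose coboundary is $\omega_2-\omega_1$.

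The natural candidate, flagged in the discussion preceding the theorem, is $N\circ\alpha\in C^1(A;A)=\Hom(A,A)$. First I would collect the consequences of the equivalence hypothesis: by assumption $T_t=\Id+tN$ is a Hom-pre-Lie homomorphism from $(A,\Omega_t^2,\alpha)$ to $(A,\Omega_t^1,\alpha)$, which yields \eqref{equi-deformation-0}--\eqref{equi-deformation-3}; the pieces I actually need are \eqref{equi-deformation-1} together with the commutation $N\circ\alpha=\alpha\circ N$ coming from \eqref{equi-deformation-0}.

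Next I would compute $\partial_{\reg}(N\circ\alpha)$ explicitly from \eqref{eq:12.1}, specialised to $n=1$ and to the regular representation $(A,\alpha,L,R)$, so that $\rho=L$, $\mu=R$ and $\beta=\alpha$. For a $1$-cochain $f$ the double sum is empty and the formula collapses to
\begin{equation*}
(\partial_{\reg} f)(x,y)=x\cdot f(\alpha^{-1}(y))+f(\alpha^{-1}(x))\cdot y-\alpha f\big(\alpha^{-2}(x)\cdot\alpha^{-2}(y)\big).
\end{equation*}
Substituting $f=N\circ\alpha$, the first two terms give $x\cdot N(y)+N(x)\cdot y$ directly. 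For the last term I would use that $\alpha$ is an algebra morphism, so $\alpha\big(\alpha^{-2}(x)\cdot\alpha^{-2}(y)\big)=\alpha^{-1}(x)\cdot\alpha^{-1}(y)$, and then $\alpha\circ N=N\circ\alpha$ together with multiplicativity of $\alpha$ once more, giving $\alpha N\big(\alpha^{-1}(x)\cdot\alpha^{-1}(y)\big)=N\alpha\big(\alpha^{-1}(x)\cdot\alpha^{-1}(y)\big)=N(x\cdot y)$. Hence $\partial_{\reg}(N\circ\alpha)(x,y)=N(x)\cdot y+x\cdot N(y)-N(x\cdot y)$, which is exactly the right-hand side of \eqref{equi-deformation-1}, i.e.\ $\omega_2-\omega_1$.

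I expect the only delicate point to be this last computation: one must track the powers of $\alpha^{-1}$ in \eqref{eq:12.1} carefully and invoke both the multiplicativity of $\alpha$ and the commutation $N\circ\alpha=\alpha\circ N$ at the right moments so that every stray factor of $\alpha$ cancels. Once $\omega_2-\omega_1=\partial_{\reg}(N\circ\alpha)$ is established, the conclusion is immediate: $\omega_2-\omega_1\in B^2(A;A)$, so $[\omega_1]=[\omega_2]$ in $H^2(A;A)$.
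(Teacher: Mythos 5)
Your proposal is correct and takes essentially the same route as the paper: the paper derives \eqref{equi-deformation-0}--\eqref{equi-deformation-3} from the homomorphism condition on $T_t=\Id+tN$ and then simply asserts that \eqref{equi-deformation-1} means $\omega_2-\omega_1=\partial_{\reg}(N\circ\alpha)$, which is exactly the identity you verify by specializing \eqref{eq:12.1} to $n=1$ and the regular representation. Your explicit computation is accurate (the powers of $\alpha$ cancel as you describe, using multiplicativity of $\alpha$ and $N\circ\alpha=\alpha\circ N$), so you have merely filled in a step the paper leaves implicit.
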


\begin{defi}\label{defi:operator}
Let $(A,\cdot,\alpha)$ be a Hom-pre-Lie algebra. A linear operator $N\in \gl(A)$ is called a {\bf Nijenhuis operator} on $(A,\cdot,\alpha)$ if $N$ satisfies \eqref{equi-deformation-0} and the equation
      \begin{eqnarray}
        N(x)\cdot N(y)=N(x\cdot_N y),\quad \forall x,y\in A.
         \label{eq:Nij3}
        \end{eqnarray}
        where the product $\cdot_N$ is defined by
\begin{equation}\label{eq:5.2}
x\cdot_N y\triangleq  N(x)\cdot y+x\cdot N(y)-N(x\cdot y).
\end{equation}
\end{defi}

\begin{ex}{\rm
Let $(A,\cdot,\alpha)$ be the Hom-pre-Lie algebra given in Example \ref{ex-hom-pre-lie}. Furthermore, let
Let $N=\begin{pmatrix} -1 & 1 \\
                       0 & -1 \end{pmatrix}.$ More precisely, $N(e_1)=-e_1, N(e_2)=e_1-e_2.$ Then $N$ is a Nijenhuis operator on $(A,\cdot,\alpha)$.

}
\end{ex}

By \eqref{equi-deformation-0}-\eqref{equi-deformation-3}, a trivial linear deformation of a Hom-pre-Lie algebra gives rise to a Nijenhuis operator $N$. Conversely, a Nijenhuis operator can also generate a trivial linear deformation as the following theorem shows.

\begin{thm}\label{Nij-deformation}
Let N be a Nijenhuis operator on a Hom-pre-Lie algebra $(A,\Omega,\alpha)$. Then a linear deformation $(A,\Omega_t,\alpha)$ of a Hom-pre-Lie algebra $(A,\Omega,\alpha)$ can be obtained by putting
\begin{equation}\label{trivial-deformation-generate}
\omega(x,y)=\partial_{\reg}(N\circ \alpha)(x,y)=x\cdot_N y.
\end{equation}
Furthermore, this linear deformation $(A,\Omega_t,\alpha)$ is trivial.
\end{thm}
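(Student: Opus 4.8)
The plan is to produce a single linear map $T_t=\Id+tN$ that at once certifies that $\omega=\cdot_N$ generates the deformation and that the deformation is trivial. I would begin with two preparatory observations. Specializing the coboundary formula \eqref{eq:12.1} to the regular representation (so $\rho=L$, $\mu=R$, $\beta=\alpha$) and applying it to the $1$-cochain $N\circ\alpha$, then using $N\circ\alpha=\alpha\circ N$ from \eqref{equi-deformation-0} and the fact that $\alpha$ is a multiplicative map, yields
\[
\partial_{\reg}(N\circ\alpha)(x,y)=N(x)\cdot y+x\cdot N(y)-N(x\cdot y)=x\cdot_N y,
\]
which is the claimed expression for $\omega$. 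The same two facts give $\omega(\alpha(x),\alpha(y))=\alpha(\omega(x,y))$, so $\omega$ respects $\alpha$ as the definition of a linear deformation demands, and hence $\alpha$ is automatically a morphism for each $\Omega_t=\Omega+t\omega$.

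The heart of the argument is the multiplicative intertwining identity
\[
T_t\big(\Omega_t(x,y)\big)=\Omega\big(T_t(x),T_t(y)\big)=T_t(x)\cdot T_t(y).
\]
Expanding both sides in $t$: the right-hand side equals $x\cdot y+t\big(N(x)\cdot y+x\cdot N(y)\big)+t^2\,N(x)\cdot N(y)$, while the left-hand side equals $x\cdot y+t\big(x\cdot_N y+N(x\cdot y)\big)+t^2\,N(x\cdot_N y)$. The coefficients of $t$ coincide immediately from the definition \eqref{eq:5.2} of $\cdot_N$, and the coefficients of $t^2$ coincide exactly by the Nijenhuis equation \eqref{eq:Nij3}, $N(x\cdot_N y)=N(x)\cdot N(y)$. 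This cancellation of the quadratic term is the one place where the defining property of a Nijenhuis operator is essential, and it is the main obstacle; the rest is formal.

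From this identity I would extract both assertions. For all but finitely many $t$ the map $T_t=\Id+tN$ is invertible, since $\det(\Id+tN)$ is a polynomial in $t$ with value $1$ at $t=0$; for such $t$ the inverse $T_t^{-1}$ intertwines $\Omega$ with $\Omega_t$ and commutes with $\alpha$ by \eqref{equi-deformation-0}, so transporting the Hom-pre-Lie identity of $(A,\Omega,\alpha)$ through $T_t^{-1}$ shows that $(A,\Omega_t,\alpha)$ satisfies it as well. But the Hom-pre-Lie identity for $\Omega_t$ is a polynomial in $t$ of degree two whose degree-one and degree-two coefficients are precisely \eqref{eq:4.1} and \eqref{eq:4.2} (the constant term vanishing because $(A,\Omega,\alpha)$ is Hom-pre-Lie); vanishing for infinitely many $t$ forces it to vanish identically. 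Hence $\Omega_t$ is a Hom-pre-Lie multiplication for every $t$, so $\omega$ generates a linear deformation. Finally, the intertwining identity is exactly \eqref{homo-1} for $T_t$ viewed as a map $(A,\Omega_t,\alpha)\to(A,\Omega,\alpha)$, and $T_t\circ\alpha=\alpha\circ T_t$ is \eqref{homo-2}; thus $T_t$ is a Hom-pre-Lie homomorphism and the deformation is trivial.
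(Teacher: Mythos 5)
Your proof is correct, but it takes a genuinely different route from the paper's. The paper splits the claim in two: it gets the cocycle condition \eqref{eq:4.1} for free from exactness ($\omega=\partial_{\reg}(N\circ\alpha)$ together with $\partial_{\reg}\circ\partial_{\reg}=0$), and then verifies \eqref{eq:4.2} --- that $(A,\omega,\alpha)$ is itself a Hom-pre-Lie algebra --- by a page-long direct expansion of $\omega(\omega(x,y),\alpha(z))-\omega(\alpha(x),\omega(y,z))-\omega(\omega(y,x),\alpha(z))+\omega(\alpha(y),\omega(x,z))$, invoking the Hom-pre-Lie identity of $\cdot$ three times and the Nijenhuis condition once; triviality via $T_t=\Id+tN$ is then only asserted to be straightforward. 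You instead establish the single intertwining identity $T_t\big(\Omega_t(x,y)\big)=T_t(x)\cdot T_t(y)$ for all $t$ --- a two-line expansion in which the $t$-coefficients match by definition of $\cdot_N$ and the $t^2$-coefficients match exactly by \eqref{eq:Nij3} --- and then transport the Hom-pre-Lie identity through $T_t$ for the generic $t$ where $T_t$ is injective, concluding for all $t$ by polynomial density. This buys a conceptual, essentially computation-free argument that simultaneously yields triviality (the intertwining identity \emph{is} \eqref{homo-1} for $T_t$, and \eqref{homo-2} is \eqref{equi-deformation-0}) and even recovers \eqref{eq:4.1} without appealing to $\partial\circ\partial=0$. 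One caveat: your determinant step presupposes $\dimension A<\infty$; in infinite dimensions $\Id+tN$ can fail to be injective for a large set of $t$ (whenever $-1/t$ is an eigenvalue of $N$, and the eigenvalue set may be uncountable). The paper never states a finiteness hypothesis for this theorem, and its computational proof is dimension-free; your argument becomes dimension-free too if you replace the genericity step by a formal one: over $A[[t]]$ the operator $\Id+tN$ is invertible with inverse $\sum_{k\geq 0}(-t)^kN^k$, and the vanishing of the coefficients of $t$ and $t^2$ in the transported identity gives \eqref{eq:4.1} and \eqref{eq:4.2} identically, hence for every scalar value of $t$.
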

\begin{proof}
By \eqref{trivial-deformation-generate}, we have $\omega\circ(\alpha\otimes\alpha)=\alpha\circ\omega$ and $\partial_{\reg}\omega=0$. To show that $\omega$ generates a linear deformation of
the Hom-pre-Lie algebra $(A,\cdot,\alpha)$, we only need to verify that~\eqref{eq:4.2} holds, that is, $(A,\omega,\alpha)$ is  a Hom-pre-Lie algebra. By straightforward computations, we have
\begin{eqnarray*}&&\omega(\omega(x,y),\alpha(z))-\omega(\alpha(x),\omega(y,z))-\omega(\omega(y,x),\alpha(z))+\omega(\alpha(y),\omega(x,z))\\
&=&N(N(x)\cdot y)\cdot \alpha(z)+N(x\cdot N(y))\cdot \alpha(z)-N^2(x\cdot y)\cdot \alpha(z)\\
&&+(N(x)\cdot y)\cdot N(\alpha(z))+(x\cdot N(y))\cdot N(\alpha(z))-N(x\cdot y)\cdot N(\alpha(z))\\
&&-N\big((N(x)\cdot y)\cdot \alpha(z)\big)-N\big((x\cdot N(y))\cdot \alpha(z)\big)+N\big(N(x\cdot y)\cdot \alpha(z)\big)\\
&&-N(\alpha(x))\cdot (N(y)\cdot z)-N(\alpha(x))\cdot (y\cdot N(z))+N(\alpha(x))\cdot N(y\cdot z)\\
&&-\alpha(x)\cdot N(N(y)\cdot z)-\alpha(x)\cdot N(y\cdot N(z))+\alpha(x)\cdot N^2(y\cdot z)\\
&&+N\big(\alpha(x)\cdot (N(y)\cdot z)\big)+N\big(\alpha(x)\cdot (y\cdot N(z))\big)-N\big(\alpha(x)\cdot N(y\cdot z)\big)\\
&&-N(N(y)\cdot x)\cdot \alpha(z)-N(y\cdot N(x))\cdot \alpha(z)+N^2(y\cdot x)\cdot \alpha(z)\\
&&-(N(y)\cdot x)\cdot N(\alpha(z))-(y\cdot N(x))\cdot N(\alpha(z))+N(y\cdot x)\cdot N(\alpha(z))\\
&&+N\big((N(y)\cdot x)\cdot \alpha(z)\big)+N\big((y\cdot N(x))\cdot \alpha(z)\big)-N\big(N(y\cdot x)\cdot \alpha(z)\big)\\
&&+N(\alpha(y))\cdot (N(x)\cdot z)+N(\alpha(y))\cdot (x\cdot N(z))-N(\alpha(y))\cdot N(x\cdot z)\\
&&+\alpha(y)\cdot N(N(x)\cdot z)+\alpha(y)\cdot N(x\cdot N(z))-\alpha(y)\cdot N^2(x\cdot z)\\
&&-N\big(\alpha(y)\cdot (N(x)\cdot z)\big)-N\big(\alpha(y)\cdot (x\cdot N(z))\big)+N\big(\alpha(y)\cdot N(x\cdot z)\big).
\end{eqnarray*}
Since $N$ commutes with $\alpha$, by the definition of a Hom-pre-Lie algebra, we have
\begin{eqnarray*}
 &&(N(x)\cdot y)\cdot N(\alpha(z))-N(\alpha(x))\cdot (y\cdot N(z))
 -(y\cdot N(x))\cdot N(\alpha(z))+\alpha(y)\cdot (N(x)\cdot N(z))=0,\\
 &&(N(y)\cdot x)\cdot N(\alpha(z))-N(\alpha(y))\cdot (x\cdot N(z))
 -(x\cdot N(y))\cdot N(\alpha(z))+\alpha(x)\cdot (N(y)\cdot N(z))=0,\\
 &&(N(x)\cdot N(y))\cdot \alpha(z)-N(\alpha(x))\cdot (N(y)\cdot z)
-(N(y)\cdot N(x))\cdot \alpha(z)+N(\alpha(y))\cdot (N(x)\cdot z)=0.
\end{eqnarray*}
Since $N$ is a Nijenhuis operator, by the definition of a Hom-pre-Lie algebra, we have
\begin{eqnarray*}&&-N(x\cdot y)\cdot N(\alpha(z))+N(\alpha(x))\cdot N(y\cdot z)+N(y\cdot x)\cdot N(\alpha(z))-N(\alpha(y))\cdot N(x\cdot z)\\
&=&-N\big((x\cdot y)\cdot_N \alpha(z)\big)+N\big(\alpha(x)\cdot_N (y\cdot z)\big)
+N\big((y\cdot x)\cdot_N \alpha(z)\big)-N\big(\alpha(y)\cdot_N (x\cdot z)\big)\\
&=&-N\big(N(x\cdot y)\cdot \alpha(z)\big)-N\big((x\cdot y)\cdot N(\alpha(z))\big)
+N\big(N(\alpha(x))\cdot (y\cdot z)\big)+N\big(\alpha(x)\cdot N(y\cdot z)\big)\\
&&+N\big(N(y\cdot x)\cdot \alpha(z)\big)+N\big((y\cdot x)\cdot N(\alpha(z))\big)
-N\big(N(\alpha(y))\cdot (x\cdot z)\big)-N\big(\alpha(y)\cdot N(x\cdot z)\big).
\end{eqnarray*}
Therefore, we have
\begin{eqnarray*}&&\omega(\omega(x,y),\alpha(z))-\omega(\alpha(x),\omega(y,z))-\omega(\omega(y,x),\alpha(z))+\omega(\alpha(y),\omega(x,z))\\
&=&-N\big((x\cdot y)\cdot N(\alpha(z))\big)+N\big(\alpha(x)\cdot (y\cdot N(z))\big)
+N\big((y\cdot x)\cdot N(\alpha(z))\big)-N\big(\alpha(y)\cdot (x\cdot N(z))\big)\\
&&-N\big((N(x)\cdot y)\cdot \alpha(z)\big)+N\big(N(\alpha(x))\cdot (y\cdot z)\big)
+N\big((y\cdot N(x))\cdot \alpha(z)\big)-N\big(\alpha(y)\cdot (N(x)\cdot z)\big)\\
&&+N\big((N(y)\cdot x)\cdot \alpha(z)\big)-N\big(N(\alpha(y))\cdot (x\cdot z)\big)
-N\big((x\cdot N(y))\cdot \alpha(z)\big)+N\big(\alpha(x)\cdot (N(y)\cdot z)\big)\\
&=&0.
\end{eqnarray*}
Thus, $\omega$ generates a linear deformation of the Hom-pre-Lie algebra $(A,\cdot_A,\alpha)$.

We define $T_t={\Id}+tN$. It is straightforward to deduce that $T_t={\Id}+tN$ is a Hom-pre-Lie algebra homomorphism from $(A,\Omega_t,\alpha)$ to  $(A,\Omega,\alpha)$. Thus, the deformation generated by $\omega=\partial_{\reg}(N\circ \alpha)$ is trivial.
\end{proof}

By \eqref{eq:Nij3} and Theorem \ref{Nij-deformation}, we have the following corollary.
\begin{cor}
Let $N$ be a Hom-Nijenhuis operator on a Hom-pre-Lie algebra $(A,\Omega,\alpha)$, then $(A,\cdot_N,\alpha)$ is a Hom-pre-Lie algebra, and $N$ is a homomorphism from $(A,\cdot_N,\alpha)$ to
$(A,\cdot,\alpha)$.
\end{cor}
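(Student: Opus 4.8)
The plan is to verify the two assertions separately, each of which follows almost immediately from Theorem~\ref{Nij-deformation} together with the two defining conditions of a Nijenhuis operator.

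First I would establish that $(A,\cdot_N,\alpha)$ is a Hom-pre-Lie algebra. Since a Nijenhuis operator $N$ produces the deformation datum $\omega=\partial_{\reg}(N\circ\alpha)=\cdot_N$, Theorem~\ref{Nij-deformation} tells us that $\omega$ generates a (trivial) linear deformation of $(A,\Omega,\alpha)$. In particular, the second defining identity \eqref{eq:4.2} of a linear deformation holds for $\omega=\cdot_N$, which is exactly the statement that $\cdot_N$ satisfies the Hom-pre-Lie identity. The multiplicativity of $\alpha$ with respect to $\cdot_N$, that is $\alpha(x\cdot_N y)=\alpha(x)\cdot_N\alpha(y)$, is the relation $\omega\circ(\alpha\otimes\alpha)=\alpha\circ\omega$ recorded at the very beginning of the proof of Theorem~\ref{Nij-deformation}. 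Hence $(A,\cdot_N,\alpha)$ is a Hom-pre-Lie algebra.

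Next I would check that $N$ is a morphism from $(A,\cdot_N,\alpha)$ to $(A,\cdot,\alpha)$ by confirming the two conditions in the definition of a morphism of Hom-pre-Lie algebras. The multiplicativity condition \eqref{homo-1}, namely $N(x\cdot_N y)=N(x)\cdot N(y)$, is precisely the Nijenhuis relation \eqref{eq:Nij3}. The compatibility with the structure maps \eqref{homo-2}, namely $N\circ\alpha=\alpha\circ N$, is the relation \eqref{equi-deformation-0} that a Nijenhuis operator satisfies by definition. Thus both conditions hold, and $N$ is a homomorphism.

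Since every ingredient is already in place, there is no genuine obstacle here: the only point requiring care is to recognize that the Nijenhuis defining equation \eqref{eq:Nij3} is literally the multiplicativity of $N$ for the deformed product $\cdot_N$, so that conditions \eqref{homo-1} and \eqref{homo-2} of a morphism are matched respectively by \eqref{eq:Nij3} and \eqref{equi-deformation-0}, with the Hom-pre-Lie structure on the source coming from Theorem~\ref{Nij-deformation}.
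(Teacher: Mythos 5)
Your proposal is correct and matches the paper's own reasoning: the paper derives this corollary directly from Theorem~\ref{Nij-deformation} (which establishes that $\omega = \partial_{\reg}(N\circ\alpha) = \cdot_N$ satisfies \eqref{eq:4.2} and $\omega\circ(\alpha\otimes\alpha)=\alpha\circ\omega$, so $(A,\cdot_N,\alpha)$ is Hom-pre-Lie) together with \eqref{eq:Nij3} and \eqref{equi-deformation-0}, which are exactly the morphism conditions \eqref{homo-1} and \eqref{homo-2} for $N$. Your identification of each ingredient is precisely the intended argument.
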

At the end of this section, we recall linear deformations of Hom-Lie algebras and Nijenhuis operators on Hom-Lie algebras, which give trivial deformations of Hom-Lie algebras.
\begin{defi}{\rm(\cite{sheng3})}\label{defi:deformation}
Let $(\g,[\cdot,\cdot],\alpha)$ be a Hom-Lie algebra and $\omega\in \huaC^2(\g;\g)$ a skew-symmetric bilinear operator, satisfying $\omega\circ (\alpha\otimes\alpha)=\alpha\circ \omega$. Consider a $t$-parameterized family of bilinear operations, $$[\cdot,\cdot]_t=[\cdot,\cdot]+t\omega.$$ If $(\g,[\cdot,\cdot]_t,\alpha)$ is a Hom-Lie algebra for all $t$, we say that $\omega$ generates a {\bf (one-parameter) linear deformation} of a Hom-Lie algebra $(\g,[\cdot,\cdot],\alpha)$.
    \end{defi}
\begin{defi}{\rm(\cite[Definition 6.6]{sheng3})}
Let $(\g,[\cdot,\cdot],\alpha)$ be a Hom-Lie algebra. A linear operator $N\in \gl(A)$ is called a {\bf Nijenhuis operator} on $(\g,[\cdot,\cdot],\alpha)$ if we have
\begin{eqnarray}
\label{homo-3}N\circ \alpha&=&\alpha\circ N,\\
\label{homo-4}[N(x),N(y)]&=&N[x,y]_N,\quad \forall x,y\in \g.
\end{eqnarray}
where the bracket $[\cdot,\cdot]_N$ is defined by
\begin{equation}
[x,y]_N\triangleq  [N(x),y]+[x,N(y)]-N[x,y].
\end{equation}
    \end{defi}
\begin{pro}
If $\omega \in C^2(A;A)$ generates a linear deformation of a Hom-pre-Lie algebra $(A,\cdot,\alpha)$, then $\omega_C \in \huaC^2(A^C;A)$ defined by$$\omega_C(x,y)=\omega(x,y)-\omega(y,x)$$ generates a linear deformation of the sub-adjacent Hom-Lie algebra $A^C$.
\end{pro}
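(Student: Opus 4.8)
The plan is to exploit the naturality of the sub-adjacent construction with respect to the deformation parameter. For each fixed $t$, the hypothesis says that $\Omega_t=\Omega+t\omega$ is a Hom-pre-Lie multiplication, so $(A,\Omega_t,\alpha)$ is a Hom-pre-Lie algebra. As recalled in the introduction, every Hom-pre-Lie algebra gives rise to a sub-adjacent Hom-Lie algebra via the commutator bracket; applied to $(A,\Omega_t,\alpha)$ this produces the bracket
\begin{equation*}
[x,y]_{C,t}:=\Omega_t(x,y)-\Omega_t(y,x)=[x,y]_C+t\big(\omega(x,y)-\omega(y,x)\big)=[x,y]_C+t\,\omega_C(x,y).
\end{equation*}
Thus the family of sub-adjacent brackets of the deformed algebras is exactly $[\cdot,\cdot]_C+t\,\omega_C$, and $(A,[\cdot,\cdot]_C+t\,\omega_C,\alpha)$ is a Hom-Lie algebra for all $t$.

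It then remains to check that $\omega_C$ satisfies the two structural requirements of Definition \ref{defi:deformation}. Skew-symmetry is immediate from the formula $\omega_C(x,y)=\omega(x,y)-\omega(y,x)$, which gives $\omega_C(y,x)=-\omega_C(x,y)$. The compatibility with $\alpha$ follows from the corresponding property $\omega\circ(\alpha\otimes\alpha)=\alpha\circ\omega$ of the deforming cochain: applying it to each summand yields $\omega_C(\alpha(x),\alpha(y))=\alpha(\omega(x,y))-\alpha(\omega(y,x))=\alpha(\omega_C(x,y))$, i.e. $\omega_C\circ(\alpha\otimes\alpha)=\alpha\circ\omega_C$.

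Combining these observations finishes the argument: $\omega_C$ is a skew-symmetric bilinear operator compatible with $\alpha$, and $[\cdot,\cdot]_C+t\,\omega_C$ is a Hom-Lie bracket for every $t$, which is precisely the definition of $\omega_C$ generating a linear deformation of $A^C$. I do not expect a genuine obstacle here; the only point requiring a little care is the opening identity, namely that the commutator of the deformed product coincides with the $\omega_C$-deformation of the commutator. Once that is recorded, one need not reprove any Hom-Lie analogue of \eqref{eq:4.1} or \eqref{eq:4.2} by hand, since the fact that the full deformed product is Hom-pre-Lie already forces its commutator to be Hom-Lie for all $t$.
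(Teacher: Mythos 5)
Your proof is correct and follows essentially the same route as the paper: both pass through the sub-adjacent Hom-Lie algebra of each deformed Hom-pre-Lie algebra $(A,\Omega_t,\alpha)$ and observe that its commutator bracket equals $[\cdot,\cdot]_C+t\,\omega_C$. Your explicit verification of skew-symmetry and of $\omega_C\circ(\alpha\otimes\alpha)=\alpha\circ\omega_C$ is a small, welcome addition that the paper leaves implicit.
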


\begin{proof}
Assume that $\omega$ generates a linear deformation of a Hom-pre-Lie algebra $(A,\cdot,\alpha)$. Then $(A,\Omega_t,\alpha)$ is a Hom-pre-Lie algebra. Consider its corresponding sub-adjacent Hom-Lie algebra $(A,[\cdot,\cdot]_t,\alpha)$, we have
\begin{eqnarray*}
  [x,y]_t&=&\Omega_t(x,y)-\Omega_t(y,x)\\
  &=&x\cdot y+t\omega(x,y)-y\cdot x-t\omega(y,x)\\
  &=&[x,y]_C+t\omega_C(x,y).
\end{eqnarray*}
Thus, $\omega_C$ generates a linear deformation of $A^C$.
\end{proof}

\begin{pro}
If $N$ is a Nijenhuis operator on a Hom-pre-Lie algebra $(A,\cdot,\alpha)$, then $N$ is a Nijenhuis operator on the sub-adjacent Hom-Lie algebra $A^C$.
\end{pro}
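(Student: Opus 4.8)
The plan is to check the two defining properties \eqref{homo-3} and \eqref{homo-4} of a Nijenhuis operator on the Hom-Lie algebra $A^C$ one at a time. The covariance condition \eqref{homo-3}, namely $N\circ\alpha=\alpha\circ N$, is literally the same as condition \eqref{equi-deformation-0} already satisfied by $N$ as a Nijenhuis operator on $(A,\cdot,\alpha)$, so it holds for free; thus the whole content of the proposition is the identity \eqref{homo-4} for the commutator bracket, $[N(x),N(y)]_C=N[x,y]_N$, where $[x,y]_N=[N(x),y]_C+[x,N(y)]_C-N[x,y]_C$.

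To establish this, I would start from the left-hand side and expand the commutator, $[N(x),N(y)]_C=N(x)\cdot N(y)-N(y)\cdot N(x)$. Applying the Hom-pre-Lie Nijenhuis identity \eqref{eq:Nij3} together with the definition \eqref{eq:5.2} of $\cdot_N$ to each of the two products gives $N(x)\cdot N(y)=N\big(N(x)\cdot y+x\cdot N(y)-N(x\cdot y)\big)$ and, after interchanging $x$ and $y$, the analogous expression for $N(y)\cdot N(x)$. Subtracting these and using the linearity of $N$ collects everything under a single application of $N$.

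The final step is purely organizational: I would regroup the six resulting terms into the three commutators $N(x)\cdot y-y\cdot N(x)=[N(x),y]_C$, $x\cdot N(y)-N(y)\cdot x=[x,N(y)]_C$, and $N(x\cdot y)-N(y\cdot x)=N[x,y]_C$, which assembles exactly into $N\big([N(x),y]_C+[x,N(y)]_C-N[x,y]_C\big)=N[x,y]_N$, as required. I do not anticipate any genuine obstacle here: the argument is a short consequence of \eqref{eq:Nij3} and the linearity of $N$, and the only point requiring care is matching the ordered products correctly when the commutator is expanded.
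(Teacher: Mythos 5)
Your proposal is correct and follows essentially the same route as the paper's proof: both verify $N\circ\alpha=\alpha\circ N$ directly from \eqref{equi-deformation-0}, expand $[N(x),N(y)]_C=N(x)\cdot N(y)-N(y)\cdot N(x)$, apply the Nijenhuis identity \eqref{eq:Nij3} with \eqref{eq:5.2} to each product, and regroup the six terms under a single $N$ into $N\big([N(x),y]_C+[x,N(y)]_C-N[x,y]_C\big)$. There is nothing to add.
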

\begin{proof}
Since $N$ is a Nijenhuis operator on $(A,\cdot,\alpha)$, we have $N\circ \alpha=\alpha\circ N$. For all $x,y\in A$, by Definition \ref{defi:operator}, we have
\begin{eqnarray*}
  [N(x),N(y)]_C&=&N(x)\cdot N(y)-N(y)\cdot N(x)\\
  &=&N\big(N(x)\cdot y+x\cdot N(y)-N(x\cdot y)-N(y)\cdot x-y\cdot N(x)+N(y\cdot x)\big)\\
  &=&N([N(x),y]_C+[x,N(y)]_C-N[x,y]_C).
\end{eqnarray*}
Thus, $N$ is a Nijenhuis operator on $A^C$.
\end{proof}

\emptycomment{
\section{Central extensions of Hom-pre-Lie algebras}
Now let $V=\mathbb{R}$, then we have $\gl(V)=\mathbb{R}$. Any $\beta\in \gl(V)$ is exactly a real number, which we use the notation $r$. Let $\rho:A\longrightarrow\gl(V)=\mathbb{R}$ and $\mu:A\longrightarrow\gl(V)=\mathbb{R}$ be the zero maps. Obviously, $(\rho,\mu)$ is a representation of the Hom-pre-Lie algebra $(A,\cdot_A,\alpha)$ on $\mathbb{R}$ with respect to any $r\in \mathbb{R}$. We will always assume that $r=1$. We call this representation the trivial representation of a Hom-pre-Lie algebra $(A,\cdot_A,\alpha)$.
The corresponding coboundary operator $\partial_T:\wedge^{n-1}V^\ast \otimes V^\ast\longrightarrow \wedge^{n}V^\ast \otimes V^\ast$ is given by
\begin{eqnarray*}
&&\partial_T f(x_1,\dots,x_{n+1})\\
&=&-\sum_{i=1}^n(-1)^{i+1} f(\alpha^{-1}(x_1),\dots,\widehat{\alpha^{-1}(x_i)}\dots,\alpha^{-1}(x_n),\alpha^{-2}(x_i)\cdot_A \alpha^{-2}(x_{n+1}))\\
&&+\sum_{1\leq i<j\leq n}(-1)^{i+j} f([\alpha^{-2}(x_i),\alpha^{-2}(x_j)]_A,\alpha^{-1}(x_1),\dots,\widehat{\alpha^{-1}(x_i)},\dots,\widehat{\alpha^{-1}(x_j)},\dots,\alpha^{-1}(x_{n+1})).
\end{eqnarray*}
\begin{lem}
Let $(A,\cdot_A,\alpha)$ be a Hom-pre-Lie algebra, for all $f\in C^n(A;\mathbb{R}),x_1,\dots,x_n\in A$, define $A_\alpha:C^n(A;\mathbb{R})\longrightarrow C^n(A;\mathbb{R})$ by
\begin{equation}
A_\alpha(f)(x_1,\dots,x_n):=f(\alpha(x_1),\dots,\alpha(x_n)).
\end{equation}
Then $A_\alpha$ is a chain map with the coefficient in the trivial representation. Furthermore, the morphism of cohomology groups $(A_\alpha)_\ast:\huaH^n_\ast(A,\mathbb{R})\longrightarrow \huaH^n_\ast(A,\mathbb{R})$ is an isomorphism of cohomology groups
\end{lem}
\begin{proof}
For all $f\in C^n(A;\mathbb{R}),x_1,\dots,x_n\in A$,
\begin{eqnarray*}
&&A_\alpha(\partial_T f)(x_1,\dots,x_n)\\
&=&\partial_T f(\alpha(x_1),\dots,\alpha(x_{n+1}))\\
&=&-\sum_{i=1}^n(-1)^{i+1} f(x_1,\dots,\widehat(x_i)\dots,x_n,\alpha^{-1}(x_i)\cdot_A \alpha^{-1}(x_{n+1}))\\
&&+\sum_{1\leq i<j\leq n}(-1)^{i+j}f([\alpha^{-1}(x_i),\alpha^{-1}(x_j)]_A,x_1,\dots,\widehat x_i,\dots,\widehat x_j,\dots,x_{n+1}),
\end{eqnarray*}
and
\begin{eqnarray*}
&&\partial_T(A_\alpha(f))(x_1,\dots,x_n)\\
&=&-\sum_{i=1}^n(-1)^{i+1} A_\alpha(f)(\alpha^{-1}(x_1),\dots,\widehat{\alpha^{-1}(x_i)}\dots,\alpha^{-1}(x_n),\alpha^{-2}(x_i)\cdot_A \alpha^{-2}(x_{n+1}))\\
&&+\sum_{1\leq i<j\leq n}(-1)^{i+j} A_\alpha(f)([\alpha^{-2}(x_i),\alpha^{-2}(x_j)]_A,\alpha^{-1}(x_1),\dots,\widehat{\alpha^{-1}(x_i)},\dots,\widehat{\alpha^{-1}(x_j)},\dots,\alpha^{-1}(x_{n+1}))\\
&=&-\sum_{i=1}^n(-1)^{i+1}f(x_1,\dots,\widehat(x_i)\dots,x_n,\alpha^{-1}(x_i)\cdot_A \alpha^{-1}(x_{n+1}))\\
&&+\sum_{1\leq i<j\leq n}(-1)^{i+j} f([\alpha^{-1}(x_i),\alpha^{-1}(x_j)]_A,\alpha^{-1}(x_1),\dots,\widehat x_i,\dots,\widehat x_j,\dots,x_{n+1}).
\end{eqnarray*}
Thus, we have $A_\alpha\circ \partial_T=\partial_T\circ A_\alpha$, which implies that $A_\alpha$ is a chain map.

It is obvious that $f$ is closed if and only if $f_\alpha$ is closed. If $f$ is exact, for all $g\in C^{n-1}(A;\mathbb{R})$, then $f=\partial_T g$, thus $f_\alpha=\partial_T(g\circ \alpha)$.
Conversely, if $f_\alpha$ is exact, then $f_\alpha=\partial_T g$, thus $f=\partial_T(g\circ \alpha^{-1})$. Thus $(A_\alpha)_\ast$ is an isomorphism of cohomology groups.
\end{proof}
For $n\ge 0$, we denote by $\huaH^n_\ast(A,\mathbb{R})^{A_\alpha}$ the set of the fixed points of $(A_\alpha)_\ast$ in $\huaH^n_\ast(A,\mathbb{R})$.

\begin{defi}
A {\bf central extension} of a Hom-pre-Lie algebra $(A,\cdot_A,\alpha)$ by $(\mathbb{R},0,1)$ is a commutative diagram with rows being short exact sequence of Hom-pre-Lie algebra morphisms:
$$\xymatrix{
  0 \ar[r] &\mathbb{R}\ar[d]\ar[r]^{\iota}& A'\ar[d]_{\alpha'}\ar[r]^{p}&A\ar[d]_{\alpha}\ar[r]&0\\
     0\ar[r] &\mathbb{R}\ar[r]^{ \iota} &A'\ar[r]^{p} &A\ar[r]&0.              }$$
where $(A',\cdot_{A'},\alpha')$ is a Hom-pre-Lie algebra.
\end{defi}

\begin{defi}\label{defi:morphism}
Let $(A_1,\cdot_{A_1},\alpha_1)$ and $(A_2,\cdot_{A_2},\alpha_2)$ be two central extensions of $(A,\cdot_A,\alpha)$. They are said to be {\bf isomorphic} if there exists an isomorphism $\zeta:(A_1,\cdot_{A_1},\alpha_1)\longrightarrow (A_2,\cdot_{A_2},\alpha_2)$ such that we have the following commutative diagram:
$$\xymatrix{
  0 \ar[r] &\mathbb{R}\ar @{=}[d]\ar[r]^{\iota_1}& A_1\ar[d]_{\zeta}\ar[r]^{p_1}&A\ar @{=}[d]\ar[r]&0\\
     0\ar[r] &\mathbb{R}\ar[r]^{ \iota_2} &A_2\ar[r]^{p_2} &A\ar[r]&0.              }$$
\end{defi}
A {\bf section} of a central extension $(A',\cdot_{A'},\alpha')$ of $(A,\cdot_A,\alpha)$ by $(\mathbb{R},0,1)$ is a linear map $s:A\longrightarrow A'$ such that $p\circ s=\Id$.

Let $(A',\cdot_{A'},\alpha')$ be a central extension of a Hom-pre-Lie algebra $(A,\cdot_A,\alpha)$ by $(\mathbb{R},0,1)$ and $s:A\longrightarrow A'$ a section. Define linear maps $\theta:A\otimes A\longrightarrow \mathbb{R}$ by
\begin{equation}
 \theta(x,y)=s(x)\cdot_{A'}s(y)-s(x\cdot_A y),\quad
 \forall ~x,y\in A.
\end{equation}
and define linear maps $\xi:A\longrightarrow \mathbb{R}$ by
\begin{equation}
 \xi(x)=\alpha'(s(x))-s(\alpha(x)),\quad
 \forall ~x\in A.
\end{equation}

Obviously, $A'$ is isomorphic to $A\oplus \mathbb{R}$ as vector spaces. Transfer the Hom-pre-Lie algebra structure on $A'$ to that on $A\oplus \mathbb{R}$, we obtain a Hom-pre-Lie algebra $(A\oplus \mathbb{R},\cdot_\theta,\alpha_\xi)$, where the product $\cdot_\theta$ and $\alpha_\xi:A\oplus \mathbb{R}\longrightarrow A\oplus \mathbb{R}$ are given by
\begin{eqnarray}
  \label{eq:6.1}(x+m)\cdot_\theta (y+n)&=&x\cdot_A y+\theta(x,y),\quad \forall ~x,y\in A, m,n\in \mathbb{R},\\
   \label{eq:6.2}\alpha_\xi(x+m)&=&\alpha(x)+m+\xi(x), \quad \forall ~x\in A, m\in \mathbb{R}.
\end{eqnarray}

\begin{thm}\label{thm:cocycle}
With the above notations, $(A\oplus \mathbb{R},\cdot_\theta,\alpha_\xi)$ is a Hom-pre-Lie algebra if and only if $\theta$ is a $2$-cocycle associated to the trivial representation and $\theta-\theta\circ\alpha=\partial_T(\xi\circ \alpha^2)$.
\end{thm}

\begin{proof}
On one hand, by direct computation, we have
\begin{eqnarray*}&&((x+m)\cdot_\theta (y+n))\cdot_\theta \alpha_\xi(z+t)-\alpha_\xi(x+m)\cdot_\theta ((y+n)\cdot_\theta (z+t))\\
&&-((y+n)\cdot_\theta (x+m))\cdot_\theta \alpha_\xi(z+t)+\alpha_\xi(y+n)\cdot_\theta ((x+m)\cdot_\theta (z+t))\\
&=&(x\cdot_A y+\theta(x,y))\cdot_\theta(\alpha(z)+t+\xi(z))-(\alpha(x)+m+\xi(x))\cdot_\theta(y\cdot_A z +\theta(y,z))\\
&&-(y\cdot_A x+\theta(y,x))\cdot_\theta(\alpha(z)+t+\xi(z))+(\alpha(y)+n+\xi(y))\cdot_\theta(x\cdot_A z +\theta(x,z))\\
&=&(x\cdot_A y)\cdot_A \alpha(z)-\alpha(x)\cdot_A (y\cdot_A z)-(y\cdot_A x)\cdot_A \alpha(z)+\alpha(y)\cdot_A (x\cdot_A z)\\
&&+\theta(x\cdot_A y,\alpha(z))-\theta(\alpha(x),y\cdot_A z)-\theta(y\cdot_A x,\alpha(z))+\theta(\alpha(y),x\cdot_A z)\\
&=&\theta(x\cdot_A y,\alpha(z))-\theta(\alpha(x),y\cdot_A z)-\theta(y\cdot_A x,\alpha(z))+\theta(\alpha(y),x\cdot_A z).
\end{eqnarray*}
On the other hand, we have
\begin{equation}
\alpha_\xi((x+m)\cdot_\theta (y+n))=\alpha_\xi(x\cdot_A y+\theta(x,y))=\alpha(x\cdot_A y)+\theta(x,y)+\xi(x\cdot_A y),
\end{equation}
and
\begin{eqnarray}
\nonumber \alpha_\xi(x+m)\cdot_\theta \alpha_\xi(y+n)&=&(\alpha(x)+m+\xi(x))\cdot_\theta (\alpha(y)+n+\xi(y))\\
&=&\alpha(x)\cdot_A \alpha(y)+\theta(\alpha(x),\alpha(y)).
\end{eqnarray}
Thus, $(A\oplus \mathbb{R},\cdot_\theta,\alpha_\xi)$ is a Hom-pre-Lie algebra if and only if
\begin{eqnarray}
  \label{eq:15.1}\theta(x\cdot_A y,\alpha(z))-\theta(\alpha(x),y\cdot_A z)-\theta(y\cdot_A x,\alpha(z))+\theta(\alpha(y),x\cdot_A z)&=&0,\\
   \label{eq:15.2}\theta-\theta\circ\alpha&=&\partial_T(\xi\circ \alpha^2).
\end{eqnarray}
and the former exactly means that $\partial_T\theta(\alpha^2(x),\alpha^2(y),\alpha^2(z))=0$.
\end{proof}

\emptycomment{
\begin{lem}
$\theta$
\end{lem}
\begin{proof}
By Theorem \ref{thm:cocycle}, we know that $\theta$ is 2-cocycle and $\theta-\theta\circ\alpha=\partial_T(\xi\circ \alpha^2)$. For all $\theta \in \huaZ^2(A;\mathbb{R})$, since $(A_\alpha)_\ast$ is an isomorphism of cohomology groups, we have $(A_\alpha)_\ast[\theta]=[\theta\circ \alpha]$. Since $[\theta]=[\theta\circ\alpha]$, thus, $[\theta]$ is a fixed point.
\end{proof}
}

\begin{thm}
The isomorphism classes of central extensions of the Hom-pre-Lie algebra $(A,\cdot_A,\alpha)$ by $(\mathbb{R},0,1)$ are associated to the fixed points $\huaH^2_\ast(A,\mathbb{R})^{A_\alpha}$.
\end{thm}
\begin{proof}
Let $(A',\cdot_{A'},\alpha')$ be a central extension of $(A,\cdot_A,\alpha)$ by $(\mathbb{R},0,1)$. By choosing a section $s:A\longrightarrow A'$, we obtain a 2-cocycle $\theta$. Now we show that the cohomological class of $\theta$ does not depend on the choice of sections. In fact, let $s_1$ and $s_2$ be two different sections. Define $\phi:A\longrightarrow \mathbb{R}$ by $\phi(x)=s_1(x)-s_2(x)$. Then we have
\begin{eqnarray*}
 \theta_1(x,y)&=&s_1(x)\cdot_{A'} s_1(y)-s_1(x\cdot_A y)\\
  &=&(s_2(x)+\phi(x))\cdot_{A'} (s_2(y)+\phi(x))-s_2(x\cdot_A y)-\phi(x\cdot_A y)\\
  &=&s_2(x)\cdot_{A'} s_2(y)-s_2(x\cdot_A y)-\phi(x\cdot_A y)\\
  &=&\theta_2(x,y)-\phi(x\cdot_A y).
\end{eqnarray*}
Thus, we obtain $\theta_1=\theta_2+\partial_T(\phi\circ \alpha^2)$. Therefore, $\theta_1$ and $\theta_2$ are in the same cohomological class.

Now we go on to prove that isomorphic central extensions give rise to the same element in $\huaH^2(A;\mathbb{R})$. Assume that $(A_1,\cdot_{A_1},\alpha_1)$ and $(A_2,\cdot_{A_2},\alpha_2)$ are two isomorphic central extensions of $(A,\cdot_A,\alpha)$ by $(\mathbb{R},0,1)$, and $\zeta:(A_1,\cdot_{A_1},\alpha_1)\longrightarrow (A_2,\cdot_{A_2},\alpha_2)$ is a morphism such that we have the commutative diagram in Definition \ref{defi:morphism}. Assume that $s_1:A\longrightarrow A_1$ is a section of $A_1$. By $p_2\circ \zeta=p_1$, we have
\begin{equation}
p_2\circ (\zeta\circ s_1)=p_1\circ s_1=\Id.
\end{equation}
Thus, we obtain that $\zeta\circ s_1$ is a section of $A_2$. Define $s_2=\zeta\circ s_1$. Since $\zeta$ is a morphism of Hom-pre-Lie algebras and $\zeta\mid_\mathbb{R}=\Id$, we have
\begin{eqnarray*}
 \theta_2(x,y)&=&s_2(x)\cdot_{A_2} s_2(y)-s_2(x\cdot_A y)\\
  &=&(\zeta\circ s_1)(x)\cdot_{A_2}(\zeta\circ s_1)(y)-(\zeta\circ s_1)(x\cdot_A y)\\
  &=&\zeta(s_1(x)\cdot_{A_1} s_1(y)-s_1(x\cdot_A y))\\
  &=&\theta_1(x,y).
\end{eqnarray*}
Thus, isomorphic central extensions gives rise to the same element in $\huaH^2(A;\mathbb{R})$.

Conversely, given two 2-cocycles $\theta_1$ and $\theta_2$, we can construct two central extensions $(A\oplus \mathbb{R},\cdot_{\theta_1},\varphi)$ and $(A\oplus \mathbb{R},\cdot_{\theta_2},\varphi)$, as in \eqref{eq:6.1} and \eqref{eq:6.2}. If they represent the same cohomological class, i.e. there exists $\phi:A\longrightarrow \mathbb{R}$, such that $\theta_1=\theta_2+\partial_T(\phi\circ \alpha^2)$, we define $\zeta:A\oplus \mathbb{R}\longrightarrow A\oplus \mathbb{R}$ by
\begin{equation}
\zeta(x+h)=x+h+\phi(x),\quad \forall ~h\in \mathbb{R}.
\end{equation}
Then we can deduce that $\zeta$ is a isomorphism between central extensions. We omit details. This finishes the proof.
\end{proof}

}

\section{$\huaO$-operators and Hessian structures on Hom-pre-Lie algebras}

In this section, we introduce the notion of an $\huaO$-operator and a Hessian structure on a Hom-pre-Lie algebra, and study their relations.

\begin{defi}
Let $(A,\cdot,\alpha)$ be a Hom-pre-Lie algebra and $(V,\beta,\rho,\mu)$ a representation. A linear map $T:V\longrightarrow A$ is called an {\bf $\huaO$-operator} on $(A,\cdot,\alpha)$ if for all $u,v\in V$,
\begin{eqnarray}
  \label{eq:10.1}T\circ\beta&=&\alpha\circ T,\\
   \label{eq:10.2}T(u)\cdot T(v)&=&T\big(\rho(T(u))(v)+\mu(T(v))(u)\big).
\end{eqnarray}
\end{defi}
\begin{pro}
Let $(A,\cdot,\alpha)$ be a Hom-pre-Lie algebra and $(V,\beta,\rho,\mu)$ a representation. A linear map $T:V\longrightarrow A$ is an $\huaO$-operator on $(A,\cdot,\alpha)$ if and only if\\ $$\bar{T}=\begin{pmatrix} 0 & T \\ 0 & 0 \end{pmatrix}:A\oplus V\longrightarrow A\oplus V$$\\
is a Nijenhuis operator on the semidirect product Hom-pre-Lie algebra $A\ltimes_{(\rho,\mu)}V$.
\end{pro}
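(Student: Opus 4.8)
The plan is to unwind both defining conditions of a Nijenhuis operator (Definition \ref{defi:operator}) on the semidirect product $A\ltimes_{(\rho,\mu)}V$ directly, and to match them one-to-one with the two $\huaO$-operator conditions \eqref{eq:10.1} and \eqref{eq:10.2}. The key structural observation that makes everything collapse is that $\bar T$ sends $x+u$ to $T(u)\in A$ and annihilates the whole $A$-summand; in particular $\bar T$ restricted to $A$ is zero, so $\bar T^2=0$, and any term lying in $A$ is killed by a subsequent application of $\bar T$. I expect the entire argument to be bookkeeping of the $A$- and $V$-components under $\bar T$ and under the product $\cdot_\ltimes$ of Proposition \ref{direct-product}, with no genuine analytic obstacle.

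First I would treat compatibility with the structure map. Applying both sides to $x+u$, one computes $\bar T\circ(\alpha+\beta)(x+u)=\bar T(\alpha(x)+\beta(u))=T(\beta(u))$ and $(\alpha+\beta)\circ\bar T(x+u)=(\alpha+\beta)(T(u))=\alpha(T(u))$, using that $(\alpha+\beta)$ restricts to $\alpha$ on $A$. Hence $\bar T\circ(\alpha+\beta)=(\alpha+\beta)\circ\bar T$ holds for all $x,u$ if and only if $T\circ\beta=\alpha\circ T$, i.e. exactly \eqref{eq:10.1}.

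Next I would expand the second Nijenhuis condition $\bar T(X)\cdot_\ltimes\bar T(Y)=\bar T(X\cdot_{\bar T}Y)$, where $X=x+u$, $Y=y+v$ and $X\cdot_{\bar T}Y=\bar T(X)\cdot_\ltimes Y+X\cdot_\ltimes\bar T(Y)-\bar T(X\cdot_\ltimes Y)$. Using the semidirect product formula, the left-hand side is $T(u)\cdot_\ltimes T(v)=T(u)\cdot T(v)$, a pure $A$-element. On the right, I would compute the three pieces, separate their $A$- and $V$-components, and observe that the $V$-component of $X\cdot_{\bar T}Y$ equals $\rho(T(u))(v)+\mu(T(v))(u)$, while its $A$-component is irrelevant because the outer $\bar T$ kills it. Thus $\bar T(X\cdot_{\bar T}Y)=T\big(\rho(T(u))(v)+\mu(T(v))(u)\big)$, and the condition reduces precisely to \eqref{eq:10.2}.

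Finally I would note that both sides of the second condition are independent of $x$ and $y$ (again because $\bar T$ only reads off the $V$-component), so its validity for all $X,Y\in A\oplus V$ is equivalent to \eqref{eq:10.2} holding for all $u,v\in V$. Combining this with the first step yields the stated equivalence. The only point demanding care — the nearest thing to an obstacle — is tracking which summands survive each application of $\bar T$ and of $\cdot_\ltimes$; once the nilpotency $\bar T|_A=0$ is used systematically, the two conditions fall out with no further effort.
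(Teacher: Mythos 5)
Your proposal is correct and follows essentially the same route as the paper's proof: both verify that $\bar T\circ(\alpha+\beta)=(\alpha+\beta)\circ\bar T$ reduces to $T\circ\beta=\alpha\circ T$, compute $\bar T(x+u)\cdot_\ltimes\bar T(y+v)=T(u)\cdot T(v)$, and use $\bar T^2=0$ (your $\bar T|_A=0$ bookkeeping is the same fact) to reduce $\bar T\bigl((x+u)\cdot_{\bar T}(y+v)\bigr)$ to $T\bigl(\rho(T(u))(v)+\mu(T(v))(u)\bigr)$. Your closing remark that both sides are independent of $x$ and $y$, making the equivalence with \eqref{eq:10.2} for all $u,v$ explicit, is a minor point the paper leaves implicit but not a different method.
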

\begin{proof}
First it is obvious that $\bar{T}\circ (\alpha+\beta)=(\alpha+\beta)\circ \bar{T}$ if and only if $T\circ\beta=\alpha\circ T$. Then for all $x,y\in A, u,v\in V$, we have
\begin{equation}
\bar{T}(x+u)\cdot \bar{T}(y+v)=T(u)\cdot T(v).
\end{equation}
On the other hand, since $\bar{T}^2=0$, we have
\begin{eqnarray*}&&\bar{T}((x+u)\cdot_{\bar{T}} (y+v))\\
&=&\bar{T}\big(\bar{T}(x+u)\cdot_\ltimes(y+v)+(x+u)\cdot_\ltimes \bar{T}(y+v)-\bar{T}((x+u)\cdot_\ltimes (y+v))\big)\\
&=&\bar{T}\big(T(u)\cdot_\ltimes(y+v)+(x+u)\cdot_\ltimes T(v)\big)\\
&=&T\big(\rho(T(u))(v)+\mu(T(v))(u)\big),
\end{eqnarray*}
which implies that  $T$ is an $\huaO$-operator on $(A,\cdot,\alpha)$ if and only if $\bar{T}$ is a Nijenhuis operator on the semidirect product Hom-pre-Lie algebra $A\ltimes_{(\rho,\mu)}V$.
\end{proof}
\begin{defi}
A {\bf Hessian structure} on a Hom-pre-Lie algebra $(A,\cdot,\alpha)$ is a symmetric nondegenerate $2$-cocycle $\huaB \in Sym^2(A^\ast)$, i.e. $\partial_T\huaB=0$, satisfying $\huaB \circ (\alpha\otimes\alpha)=\huaB$.  More precisely,
\begin{eqnarray}
  \label{hom-hessian-1}\huaB(\alpha(x),\alpha(y))&=&\huaB(x,y),\\
  \label{hom-hessian-2}\huaB(x\cdot y,\alpha(z))-\huaB(\alpha(x),y\cdot z)&=&\huaB(y\cdot x,\alpha(z))-\huaB(\alpha(y),x\cdot z),\quad \forall x,y,z\in A.
\end{eqnarray}

\end{defi}
Let $A$ be a vector space, for all $\huaB \in Sym^2(A^\ast)$, the linear map $\huaB^\sharp:A \longrightarrow A^\ast$ is given by
\begin{equation}\label{hessian}
\langle \huaB^\sharp(x),y\rangle=\huaB(x,y),\quad \forall x,y\in A.
\end{equation}
\begin{pro}
With the above notations, $\huaB \in Sym^2(A^\ast)$ is a Hessian structure on a Hom-pre-Lie algebra $(A,\cdot,\alpha)$ if and only if $(\huaB^\sharp)^{-1}$ is an $\huaO$-operator on $(A,\cdot,\alpha)$ with respect to the representation $(A^*,(\alpha^{-1})^*, L^\star-R^\star,-R^\star)$.
\end{pro}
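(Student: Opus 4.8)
The plan is to use the bijection $\huaB^\sharp\colon A\to A^\ast$, which is well defined and invertible precisely because $\huaB$ is nondegenerate, and to translate the two defining conditions of an $\huaO$-operator for $T:=(\huaB^\sharp)^{-1}\colon A^\ast\to A$ into the two defining conditions of a Hessian structure. Throughout I would write $x=T(\xi),\,y=T(\eta)$, so that $\xi=\huaB^\sharp(x),\,\eta=\huaB^\sharp(y)$ and $\langle\huaB^\sharp(x),y\rangle=\huaB(x,y)$. The argument then splits: first show \eqref{eq:10.1} is equivalent to \eqref{hom-hessian-1}, and then, assuming this, show \eqref{eq:10.2} is equivalent to \eqref{hom-hessian-2}; since \eqref{eq:10.1}$\Leftrightarrow$\eqref{hom-hessian-1} is already settled, this yields the full biconditional.

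For the first equivalence, condition \eqref{eq:10.1} reads $T\circ(\alpha^{-1})^\ast=\alpha\circ T$; left- and right-multiplying by $\huaB^\sharp=T^{-1}$ turns it into $(\alpha^{-1})^\ast\circ\huaB^\sharp=\huaB^\sharp\circ\alpha$. Pairing both sides with an arbitrary $y\in A$ gives $\huaB(x,\alpha^{-1}(y))=\huaB(\alpha(x),y)$, and replacing $y$ by $\alpha(y)$ this is exactly the invariance \eqref{hom-hessian-1}. Hence \eqref{eq:10.1}$\Leftrightarrow$\eqref{hom-hessian-1}.

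For the second equivalence I would first record the explicit formulas coming from the dual of the regular representation: from \eqref{eq:1.3}–\eqref{eq:1.4} with $\beta=\alpha$, together with the fact that $\alpha$ is an algebra morphism, one gets $\langle L^\star(x)(\xi),z\rangle=-\langle\xi,\alpha^{-1}(x)\cdot\alpha^{-2}(z)\rangle$ and $\langle R^\star(x)(\xi),z\rangle=-\langle\xi,\alpha^{-2}(z)\cdot\alpha^{-1}(x)\rangle$. Applying $\huaB^\sharp=T^{-1}$ to \eqref{eq:10.2} (with $\rho=L^\star-R^\star,\ \mu=-R^\star$) and pairing with $z\in A$ turns the $\huaO$-operator equation into a trilinear identity for $\huaB$. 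Using these formulas, the substitution $z\mapsto\alpha(z)$, and the invariance \eqref{hom-hessian-1} already available from the first step, this identity simplifies to
\[
\huaB(x\cdot y,\alpha(z))=-\huaB(\alpha(y),x\cdot z)+\huaB(\alpha(y),z\cdot x)+\huaB(\alpha(x),z\cdot y).
\]
Rewriting each term by the symmetry of $\huaB$ in the normalized form $H(a,b,c):=\huaB(a\cdot b,\alpha(c))$, the displayed relation becomes $H(x,y,z)+H(x,z,y)-H(z,x,y)-H(z,y,x)=0$, whereas \eqref{hom-hessian-2} becomes $H(x,y,z)+H(x,z,y)-H(y,x,z)-H(y,z,x)=0$. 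These two families of relations, quantified over all $x,y,z$, are carried into one another by interchanging the names $y$ and $z$, and are therefore equivalent.

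The routine but lengthy part will be unwinding \eqref{eq:10.2} through the $L^\star,R^\star$ formulas and the powers of $\alpha$. The conceptual crux—where I expect the only genuine subtlety—is the final step: after using the symmetry of $\huaB$, the relation obtained from the $\huaO$-operator condition is not literally \eqref{hom-hessian-2} but an instance of it under a permutation of the three arguments, so the two are equivalent only because they are imposed for all triples. Combining the two equivalences gives that $T=(\huaB^\sharp)^{-1}$ is an $\huaO$-operator with respect to $(A^\ast,(\alpha^{-1})^\ast,L^\star-R^\star,-R^\star)$ if and only if $\huaB$ is a Hessian structure.
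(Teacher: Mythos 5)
Your proposal is correct and takes essentially the same route as the paper: transporting the two $\huaO$-operator conditions \eqref{eq:10.1}--\eqref{eq:10.2} through $\huaB^\sharp$, matching \eqref{eq:10.1} with \eqref{hom-hessian-1}, and unwinding \eqref{eq:10.2} via the explicit $L^\star,R^\star$ formulas to arrive at exactly the trilinear identity the paper derives, which the paper then closes by invoking \eqref{hom-hessian-2} with the permuted arguments $(x,z,y)$ --- precisely the permutation subtlety your final step makes explicit. The only organizational difference is that you run both directions simultaneously as a chain of equivalences (making the ``assuming \eqref{hom-hessian-1}'' dependency of the second step explicit), whereas the paper writes out the Hessian-to-$\huaO$-operator direction in full and dismisses the converse as similar.
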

\begin{proof}
Let $\huaB$ be a Hessian structure on a Hom-pre-Lie algebra $(A,\cdot,\alpha)$.
For all $x,y\in A$, by \eqref{hom-hessian-1} and \eqref{hessian}, we have
\begin{eqnarray*}\langle\huaB^\sharp(\alpha(x))-(\alpha^{-1})^\ast(\huaB^\sharp(x)),\alpha(y)\rangle
&=&\langle\huaB^\sharp(\alpha(x)),\alpha(y)\rangle-\langle(\alpha^{-1})^\ast(\huaB^\sharp(x)),\alpha(y)\rangle\\
&=&\langle\huaB^\sharp(\alpha(x)),\alpha(y)\rangle-\langle\huaB^\sharp(x),y\rangle\\
&=&\huaB(\alpha(x),\alpha(y))-\huaB(x,y)\\
&=&0,
\end{eqnarray*}
which implies that $\huaB^\sharp\circ \alpha=(\alpha^{-1})^\ast\circ \huaB^\sharp$. Thus, we obtain that
\begin{equation}\label{hessian-operator1}
(\huaB^\sharp)^{-1}\circ (\alpha^{-1})^\ast=\alpha\circ (\huaB^\sharp)^{-1}.
\end{equation}

For all $\xi,\eta,\gamma \in A^\ast$, set $x=\alpha^{-1}((B^\sharp)^{-1}(\xi))$, $y=\alpha^{-1}((B^\sharp)^{-1}(\eta))$, $z=\alpha^{-1}((B^\sharp)^{-1}(\gamma))$, i.e. $\huaB^\sharp(\alpha(x))=\xi$, $\huaB^\sharp(\alpha(y))=\eta$, $\huaB^\sharp(\alpha(z))=\gamma$, by \eqref{hom-hessian-2} and \eqref{hessian}, we have
\begin{eqnarray*}&&\langle (\huaB^\sharp)^{-1}(\xi)\cdot (\huaB^\sharp)^{-1}(\eta)-(\huaB^\sharp)^{-1}\big((L^\star-R^\star)((\huaB^\sharp)^{-1}(\xi))(\eta)-R^\star( (\huaB^\sharp)^{-1}(\eta))(\xi)\big),(\alpha^{-1})^\ast(\gamma) \rangle\\
&=&\langle \alpha(x)\cdot \alpha(y),(\alpha^{-1})^\ast(\gamma) \rangle -\langle(\huaB^\sharp)^{-1}\big((L^\star-R^\star)(\alpha(x))(\eta)-R^\star(\alpha(y))(\xi)\big),\huaB^\sharp(\alpha^2(z))\rangle \\
&=&\langle x\cdot y,\gamma \rangle -\langle(\huaB^\sharp)^\ast(\huaB^\sharp)^{-1}\big((L^\star-R^\star)(\alpha(x))(\eta)-R^\star(\alpha(y))(\xi)\big), \alpha^2(z)\rangle\\
&=&\langle x\cdot y,\huaB^\sharp(\alpha(z)) \rangle -\langle(L^\star-R^\star)(\alpha(x))(\eta)-R^\star(\alpha(y))(\xi), \alpha^2(z)\rangle\\
&=&\langle  x\cdot y,\huaB^\sharp(\alpha(z))  \rangle +\langle \eta,(L-R)(x)(z)\rangle-\langle\xi,R(y)(z)\rangle\\
&=& \langle x\cdot y,\huaB^\sharp(\alpha(z)) \rangle +\langle\huaB^\sharp(\alpha(y)),x\cdot z\rangle
-\langle\huaB^\sharp(\alpha(y)),z\cdot x\rangle-\langle \huaB^\sharp(\alpha(x)), z \cdot y \rangle\\
&=&\huaB(\alpha(z),x\cdot y)+B(\alpha(y),x\cdot z)-B(\alpha(y),z\cdot x)-B(\alpha(x),z\cdot y)\\
&=&\huaB(\alpha(z),x\cdot y)+B(x\cdot z,\alpha(y))-B(z\cdot x,\alpha(y))-B(\alpha(x),z\cdot y)\\
&=&0,
\end{eqnarray*}
which implies that
\begin{equation}\label{hessian-operator2}
(\huaB^\sharp)^{-1}(\xi)\cdot (\huaB^\sharp)^{-1}(\eta)=(\huaB^\sharp)^{-1}\big((L^\star-R^\star)((\huaB^\sharp)^{-1}(\xi))(\eta)-R^\star( (\huaB^\sharp)^{-1}(\eta))(\xi)\big).
\end{equation}
By \eqref{hessian-operator1} and \eqref{hessian-operator2}, we deduce that $(\huaB^\sharp)^{-1}$ is an $\huaO$-operator on $(A,\cdot,\alpha)$ with respect to the representation $(A^*,(\alpha^{-1})^*, L^\star-R^\star,-R^\star)$.

The converse part can be proved similarly. We omit details. The
 proof is finished.
\end{proof}

\end{document}